\def\set@curr@file#1{%
  \begingroup
    \escapechar\m@ne
    \xdef\@curr@file{\expandafter\string\csname #1\endcsname}%
  \endgroup
}
\def\quote@name#1{"\quote@@name#1\@gobble""}
\def\quote@@name#1"{#1\quote@@name}
\def\unquote@name#1{\quote@@name#1\@gobble"}
\crefname{equation}{}{}
\Crefname{equation}{Eqn.~}{Eqns.~}
\crefname{enumi}{}{}
\Crefname{enumi}{}{}
\newtheorem{theorem}{Theorem}[section]
\newtheorem{lemma}[theorem]{Lemma}
\newtheorem{proposition}[theorem]{Proposition}
\newtheorem{corollary}[theorem]{Corollary}
\newtheorem{definition}[theorem]{Definition}
\newtheorem{assumption}[theorem]{Assumption}
\newtheorem{remark}[theorem]{Remark}
\newcommand{\proofends}{\qed}
\newenvironment{enumeratei}{\begin{enumerate}[\upshape (i)]}{\end{enumerate}}
\newenvironment{enumeraten}{\begin{enumerate}[\upshape (1)]}{\end{enumerate}}
\newenvironment{enumerateA}{\begin{enumerate}[\upshape (A)]}{\end{enumerate}}
\numberwithin{equation}{section}
\renewcommand{\P}{\mathbb{P}}
\newcommand{\E}{\mathbb{E}}
\newcommand{\Var}{\mathrm{Var}}
\newcommand{\cond}{\mid}
\newcommand{\bcond}{\;\big\vert\;}
\newcommand{\countin}{\,\vert\,}
\newcommand{\distr}{\sim}
\newcommand{\Unif}{\mathrm{Unif}}
\newcommand{\bal}{\begin{aligned}}
\newcommand{\eal}{\end{aligned}}
\newcommand*{\beq}{\begin{equation}}
\newcommand*{\eeq}{\end{equation}}
\newcommand*{\sss}{\scriptscriptstyle}
\newcommand{\wih}{\widehat}
\newcommand{\wit}{\widetilde}
\def \toinp    {\buildrel {\P}\over{\longrightarrow}}
\def \toindis  {\buildrel \textit{d}\over{\longrightarrow}}
\def \toLWC    {\buildrel \text{\rm loc}\over{\longrightarrow}}
\def \toLWCP   {\buildrel {\P\text{\rm -loc}}\over{\longrightarrow}}
\def \eqindis  {\buildrel \textit{d}\over{=}}
\DeclarePairedDelimiter\abs{\lvert}{\rvert}
\DeclarePairedDelimiter\norm{\lVert}{\rVert}
\DeclarePairedDelimiter\floor{\lfloor}{\rfloor}
\DeclareMathOperator{\supp}{supp}
\DeclareMathOperator{\disjointunion}{\stackrel{\centerdot}{\cup}}
\DeclareMathOperator{\dist}{dist}
\newcommand{\bmatch}{\omega}
\newcommand{\bmatchset}{\Omega}
\newcommand{\comrole}{\mapsfrom}
\newcommand{\comroles}{\stackrel{\otimes}{\longmapsfrom}}
\newcommand{\midmin}{\mathbin{\wedge}}
\newcommand{\vertices}{\SV}
\newcommand{\edges}{\SE}
\newcommand{\boundary}{\partial B}
\newcommand{\leftpar}{\SV^\msl}
\newcommand{\rightpar}{\SV^\msr}
\newcommand{\otreesize}{\beta_r}
\newcommand{\otreetype}[1]{\alpha_r(#1)}
\newcommand{\otreebefore}[1]{\beta_r^{{<}}(#1)}
\newcommand{\otreetypebefore}[1]{\alpha_r^{{<}}(#1)}
\newcommand{\graphs}{\SG}
\newcommand{\comgraphs}{\SH}
\newcommand{\isom}{\simeq}
\newcommand{\orderedisom}{\cong}
\newcommand{\groot}{o}
\newcommand{\otroot}{\vect{0}}
\newcommand{\vect}[1]{\underline{#1}}
\newcommand{\markset}{\SM}
\newcommand{\markfunc}{\Xi}
\newcommand{\nomark}{\varnothing}
\newcommand{\partt}{F}
\newcommand{\partitions}{\SF}
\newcommand{\nth}{\nn}
\newcommand{\ith}{^\text{th}}
\newcommand{\nn}[1]{{#1}^{\sss (n)}}
\newcommand{\RIG}{\mathrm{RIG}}
\newcommand{\RIGC}{\mathrm{RIGC}}
\newcommand{\BCM}{\mathrm{BCM}}
\newcommand{\com}{\mathrm{Com}}
\newcommand{\comvect}{\mathbf{Com}}
\newcommand{\BP}{\mathrm{BP}}
\newcommand{\rCP}{\mathrm{CP}}
\newcommand{\dloc}{d_{\mathrm{loc}}}
\newcommand{\Eudist}{d_{\mathrm{eucl}}}
\newcommand{\rmax}{r_{\mathrm{max}}}
\newcommand{\triproj}{\Delta^\msp}
\newcommand{\tricom}[1]{\Delta_{#1}^\msc}
\newcommand{\tricomrand}[1]{\Lambda_{#1}^\msc}
\newcommand{\clustering}{\mathrm{Cl}}
\newcommand{\overlapset}{\SL}
\newcommand{\intersection}{\mathscr{O}}
\newcommand{\indc}[1]{\ind_{C_{#1}}}
\newcommand{\neighbors}{\SN}
\newcommand{\ldeg}{\msl\text{-}\mathrm{deg}}
\newcommand{\rdeg}{\msr\text{-}\mathrm{deg}}
\newcommand{\bdeg}{\msb\text{-}\mathrm{deg}}
\newcommand{\cdeg}{\msc\text{-}\mathrm{deg}}
\newcommand{\pdeg}{\msp\text{-}\mathrm{deg}}
\newcommand{\ballevent}{\CB}
\newcommand{\goodevent}{\CE}
\newcommand{\dmax}[1]{d_{\mathrm{max}}^{#1}}
\newcommand{\CB}{\mathcal{B}}
\newcommand{\CE}{\mathcal{E}}
\newcommand{\CI}{\mathcal{I}}
\newcommand{\CR}{\mathcal{R}}
\newcommand{\he}{\mathscr{h}}
\newcommand{\msl}{\mathscr{l}}
\newcommand{\msr}{\mathscr{r}}
\newcommand{\msb}{\mathscr{b}}
\newcommand{\msp}{\mathscr{p}}
\newcommand{\mss}{\mathscr{s}}
\newcommand{\msc}{\mathscr{c}}
\newcommand{\SV}{\mathscr{V}}
\newcommand{\SB}{\mathscr{B}}
\newcommand{\SE}{\mathscr{E}}
\newcommand{\SF}{\mathscr{F}}
\newcommand{\SG}{\mathscr{G}}
\newcommand{\SH}{\mathscr{H}}
\newcommand{\SL}{\mathscr{L}}
\newcommand{\SM}{\mathscr{M}}
\newcommand{\SN}{\mathscr{N}}
\newcommand{\SP}{\mathscr{P}}
\newcommand{\bitd}{\boldsymbol{d}}
\newcommand{\bitp}{\boldsymbol{p}}
\newcommand{\bitq}{\boldsymbol{q}}
\newcommand{\R}{\mathbb{R}}
\newcommand{\N}{\mathbb{N}}
\newcommand{\Z}{\mathbb{Z}}
\newcommand{\ind}{\mathbbm{1}}
\newcommand*{\ro}{\varrho}
\newcommand{\eps}{\varepsilon}
\begin{document}

\title[Description of $\RIGC$]{Random intersection graphs with communities}
\author[R.v.d.Hofstad]{Remco van der Hofstad}
\author[J.Komj\'athy]{J\'ulia Komj\'athy}
\author[V.Vadon]{Vikt\'oria Vadon}
\date{\today}
	\subjclass[2010]{Primary: 60C05, 05C80, 90B15.}
	\keywords{Random networks, community structure, overlapping communities, random intersection graphs, local weak convergence}

	\address{Department of Mathematics and
	    Computer Science, Eindhoven University of Technology, P.O.\ Box 513,
	    5600 MB Eindhoven, The Netherlands.}

	\email{r.w.v.d.hofstad@tue.nl, j.komjathy@tue.nl, v.vadon@tue.nl}

\begin{abstract}
Random intersection graphs model networks with communities, assuming an underlying bipartite structure of groups and individuals, where these groups may overlap. Group memberships are generated through the bipartite configuration model. Conditionally on the group memberships, the classical random intersection graph is obtained\hyphenation{ob-tained} by connecting individuals when they are together in at least one group. We generalize this definition, allowing for arbitrary community structures within the groups. 

In our new model, groups might overlap and they have their own internal structure described by a graph, the classical setting corresponding to groups being complete graphs. Our model turns out to be tractable. We analyze the overlapping structure of the communities, derive the asymptotic degree distribution and the local clustering coefficient. These proofs rely on local weak convergence, which also implies that subgraph counts converge. We further exploit the connection to the bipartite configuration model, for which we also prove local weak convergence, and which is interesting in its own right.
\end{abstract}

\maketitle

\section{Introduction}\label{s:introduction}
Communities are local structures that are more densely connected than the network average. They are present in numerous real-life networks \cite{GirNew02}, for example in the Internet, in collaboration networks and in social networks, and offer a possible explanation for the often observed high clustering (transitivity) \cite[Chapter 7.9, 11]{Newm10}.

 There are several possible reasons why communities arise, e.g.\ an underlying geometry or properties shared by the vertices. We focus on networks with an underlying structure of individuals and groups that they are part of. While our terminology and examples are mainly taken from social networks, the model is applicable for any network that builds on some kind of group structure. Such structures exist in many real-life networks \cite{GuiLat04,GuiLat06}, the most evident example being collaboration networks, like the Internet movie database IMDb or the ArXiv. In these examples, the `individuals' are the actors and actresses or the authors, and the `groups' are the movies or articles they collaborate in. We can also consider a social network based on groups, where `groups' can represent families, common interests, workplaces or cities.

Due to the complexity of real-world networks, they are often modeled using \emph{random graphs} \cite{Bol01,Dur07,JanLucRuc00}. Properties and processes of interest, e.g.\ distances, clustering, network evolution and information or epidemic spreading processes, are studied on the random graph models to predict their behavior on real-life networks. An underlying group structure such as mentioned above is modeled using bipartite graphs, where the two partitions correspond to the individuals (people) and the groups (or attributes), and an edge represents a group membership, see \cref{fig:group_memberships}. The historical random graph model for networks with group structure is the random intersection graph ($\RIG$) first introduced in \cite{Sing96}. Over the years, several ways were introduced to generate the (random) bipartite graph of group memberships \cite{BloGodJawKurRyb15}: ranging from independent percolation on the complete bipartite graph (binomial $\RIG$ \cite{FilSchSin00,KarSchSin99,Sing96} or inhomogeneous RIG \cite{BloDam13,DeiKet09}), through pre-assigning the number of group memberships to each individual and connecting them to uniformly chosen groups (uniform $\RIG$ \cite{BalGer2009unifRIG,Ryb2011unifRIG} or generalized $\RIG$ \cite{Bloz10,Bloz13,Bloz17,BloJawKur13,GodeJaw2003}), to pre-assigning the number of group memberships to each individual as well as the number of group members to each group, then matching these ``tokens'' uniformly (i.e., the group memberships are generated via the bipartite configuration model) \cite{CouLel15,New2003configRIG}. What all of these models have in common is that once the group memberships are generated, each two individuals that share a group are connected. As a result, groups (communities) do overlap, while each community is a complete graph, see \cref{fig:RIG}, which may not be a realistic assumption for large communities.

\begin{figure}[htb]
\centering
\begin{subfigure}[c]{0.6\textwidth}
	\centering
	\includegraphics[width=0.8\textwidth]{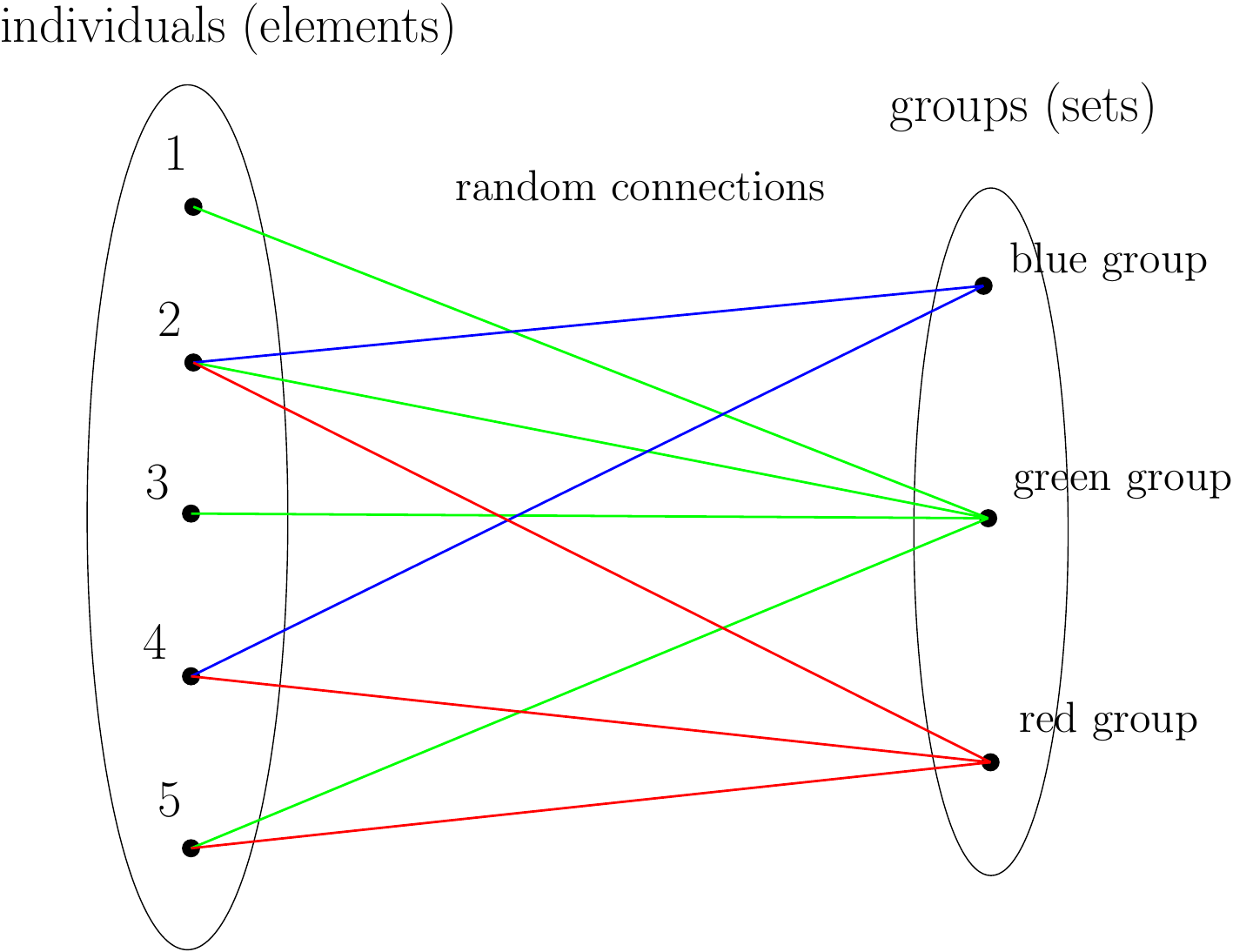}
	\caption{\emph{Modeling group memberships as a (random) bipartite graph}}
	\label{fig:group_memberships}
\end{subfigure}\hspace{5px}
\begin{subfigure}[c]{0.36\textwidth}
\centering
	\begin{subfigure}[t]{\textwidth}
		\centering
		\includegraphics[width=0.6\textwidth]{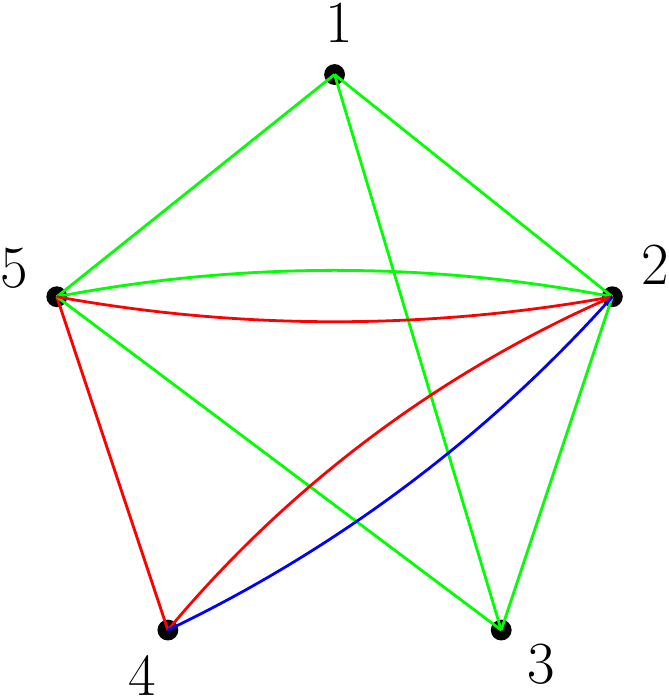}
		\caption{\emph{$\RIG$: union of cliques}}
		\label{fig:RIG}
	\end{subfigure}\\
	\begin{subfigure}[t]{\textwidth}
		\centering
		\includegraphics[width=0.6\textwidth]{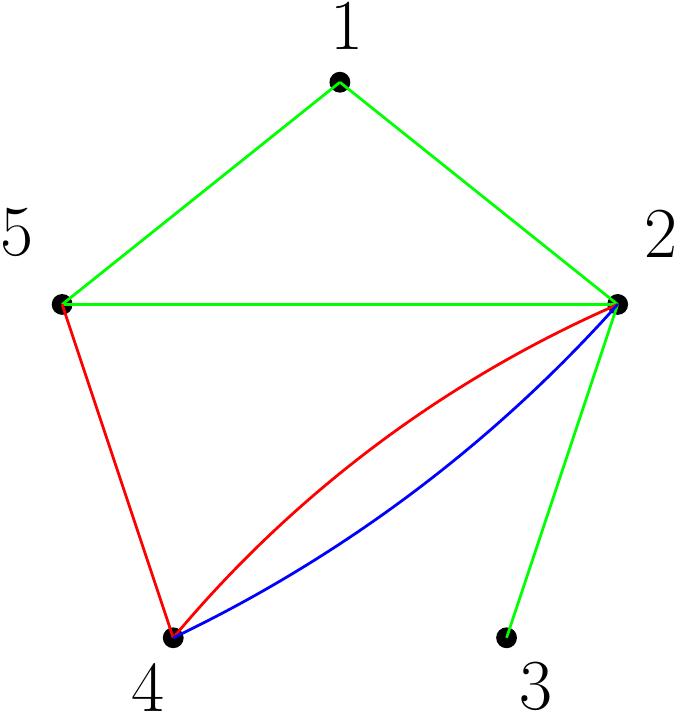}
		\caption{\emph{$\RIGC$: arbitrary communities}}
		\label{fig:RIGC}
	\end{subfigure}
\end{subfigure}
\caption{Two models for overlapping communities: RIG and RIGC}
\label{fig:RIG_vs_RIGC}
\end{figure}

One easy and natural way to go about this is thinning communities \cite{KarLeuwLes18,New2003configRIG}, however this may not give the full generality we desire. The recently introduced hierarchical configuration model (HCM) \cite{SteHofLeuw2015,SteHofLeuw2016powerlaw}, that extends the household model \cite{BallSirlTrap09,BallSirlTrap10}, offers an alternative approach, using arbitrary communities as building blocks with random connections \emph{between} the communities, resulting in non-overlapping communities. In this paper, we aim to bridge the gap: we introduce a new random graph model, the random intersection graph with community structure ($\RIGC$), that accommodates arbitrary, yet at the same time overlapping, communities, see \cref{fig:RIGC}.

The $\RIGC$ model is flexible in terms of the choice of parameters, ranging from i.i.d.\ random variables to data taken from real-life networks, see \cref{ss:RIGC_discussion} for a brief discussion. The model also turns out to be analytically tractable. In this paper, we keep our assumptions as general as possible, and present results on the overlapping structure, local properties of the model (including local weak convergence, degree structure and non-trivial clustering). Its global properties, including the existence and quantification of the so-called giant component (a unique linear-sized connected component), and percolation on the $\RIGC$ model are studied in the companion paper \cite{vdHKomVad19globArxiv}. The proofs rely on the connection to the bipartite configuration model that generates the group memberships. The matching results that we present on the bipartite configuration model are hence both instrumental to the $\RIGC$ and of independent interest.

\smallskip
\paragraph*{\textbf{Outline of the paper}} 

The rest of this paper is organized as follows. In \cref{s:RIGC}, we introduce the random intersection graph with community structure ($\RIGC$), state our results and provide a brief discussion. In \cref{s:BCM}, we introduce the underlying bipartite configuration model ($\BCM$), relate it to the $\RIGC$ model, and prove our main results for the $\BCM$. We provide the proofs for the $\RIGC$ in \cref{s:proof_local}.

\smallskip
\paragraph*{\textbf{Notational conventions}}
We will consider a sequence of graphs and consequently, a sequence of input parameters, both indexed by $n\in\N$. We note that $n$ only serves as the index; it does not necessarily mean the size or any other parameter of the graph, which allows for studying more general (growing) graph sequences. We often omit the dependence on $n$ to keep the notation light, as long as it does not cause confusion. Throughout this paper, we distinguish the set of positive integers as $\Z^+$ and the set of non-negative integers as $\N$. The notions $\toinp$ and $\toindis$ stand for convergence in probability and convergence in distribution (weak convergence), respectively. We write $X\eqindis Y$ to mean that the random variables $X$ and $Y$ have the same distribution. For an $\N$-valued random variable $X$ such that $\E[X]<\infty$, we define its \emph{size-biased} distribution $X^\star$ and the transform $\wit X$ with the following probability mass functions (pmf): for all $k\in\N$,
\beq\label{eq:def_sizebiasing} \P(X^\star = k) = k \,\P(X=k) / \E[X], 
\qquad \P(\wit X = k) = \P(X^\star - 1 = k). \eeq
We say that a sequence of events $(A_n)_{n\in\N}$ occurs with high probability (whp), if $\lim_{n\to\infty}\P(A_n)=1$. For two (possibly) random sequences $(X_n)_{n\in\N}$ and $(Y_n)_{n\in\N}$, we say that $X_n = o_{\sss\P}(Y_n)$ if $X_n/Y_n \toinp 0$ as $n\to\infty$. We denote the set $[n]:=\{1,2,\ldots,n\}$ and the indicator of an event $A$ by $\ind_A$. For a graph $G$, we denote its vertex set by $\vertices(G)$, its size by $\abs{G} = \abs{\vertices(G)}$ and its edge set by $\edges(G)$.

\FloatBarrier
\section{Model and results}
\label{s:RIGC}
In this section, we give a formal definition of the $\RIGC$ model and present our results on its local properties, as well as provide a discussion on its applicability.

\subsection{Definition of the random intersection graph with communities}
\label{ss:RIGC_def}
First, we give a short, intuitive description of the \emph{random intersection graph with communities}, followed by a detailed, formal construction. After introducing the parameters, the construction happens in two steps. 
First, we construct the \emph{community structure}: an underlying bipartite graph that represents the group memberships, where all the randomness arises from. Then we explain how to derive the $\RIGC$ based on the given community structure.

\smallskip
\paragraph*{\textbf{Intuitive model description}}
The aim of the model is to create a network that uses given community graphs as its building blocks, but at the same time allows them to overlap. We achieve this by thinking of vertices in the community graphs as \emph{community roles} that may be taken by the individuals. The individuals are represented as a distinct set of vertices, and we allow them to take on (possibly several) community roles by assigning them membership tokens. Each membership token corresponds to one community role taken, and we match membership tokens with community roles one-to-one, uniformly at random (uar). Then, we identify each individual with all the community roles it takes, ``gluing'' together the community graphs, which introduces overlaps and creates the (much more interconnected) network.

\smallskip
\paragraph*{\textbf{Parameters}}
Intuitively, we think of the individuals being placed on the left-hand side (lhs) and the groups (communities) on the right-hand side (rhs), and consequently we sometimes refer to them as $\msl$-vertices and $\msr$-vertices, respectively. We denote the set of individuals by $\leftpar = [N_n]$, where the number of individuals $N_n$ satisfies $N_n\to\infty$ as $n\to\infty$. Similarly, we denote the set of communities $\rightpar = [M_n]$, where $M_n\to\infty$ is to be defined later.

In this paper, we will encounter three types of relevant degrees, as we work with three different types of graphs: the $\RIGC$ model itself, the bipartite graph used to generate its community memberships, and the community graphs we use as building blocks. The notion ``degree'' is reserved for the most natural concept, namely, the number of connections of the individual in the resulting $\RIGC$; we sometimes refer to this notion of degree as ``projected degree'' ($\msp$-degree) for clarity. 
On the level of the underlying bipartite graph, the role of ``degrees'' is taken by the number of group memberships (for individuals) and the number of community members (for groups). Hence we introduce the concept of $\msl$-degrees and $\msr$-degrees (of $\msl$- and $\msr$-vertices, respectively), that we may collectively refer to as bipartite degrees ($\msb$-degrees). 
Within the community graphs, we will refer to the degree of a community vertex as its community degree ($\msc$-degree). We soon introduce notation for all three types of degrees.

As mentioned above, the number of group memberships of an individual $v\in\leftpar$ is called its $\msl$-degree which we denote by $d_v^\msl = \ldeg(v)$. For a community $a\in\rightpar$, we denote its community graph by $\com_a$ and we suppose that it comes from the set of possible community graphs $\comgraphs$, defined as follows. Let $\comgraphs$ be the set of (non-empty,) simple, finite, connected graphs, and equip each graph with an arbitrary fixed labeling, so that any two isomorphic community graphs are labeled in the exact same way. (We do allow several communities to have the same community graph.) Without loss of generality (wlog), we assume that $H\in\comgraphs$ is labeled by the set $[\abs{H}]$. 
We call the size $\abs{\com_a}$ of the community graph the $\msr$-degree of $a$, denoted by $d_a^\msr = \rdeg(a)$. We collect the $\msl$- and $\msr$-degrees and the community graphs in the vectors $\bitd^\msl := (d_v^\msl)_{v\in\leftpar}$, $\bitd^\msr := (d_a^\msr)_{a\in\rightpar}$ and $\comvect := (\com_a)_{a\in\rightpar}$, respectively.
Wlog we assume that $\bitd^\msl \geq 1$ and $\bitd^\msr \geq 1$ (element-wise) for each $n$, as isolated vertices can simply be excluded by adjusting $N_n$ and $M_n$. 
Also note that $\bitd^\msr$ is derived from $\comvect$, thus the $\RIGC$ is parametrized by the pair $(\bitd^\msl,\comvect)$. 
For a visual representation of the parameters, see \cref{fig:params_matching}.

\begin{figure}[hbt]
\centering
\includegraphics[width=0.8\textwidth]{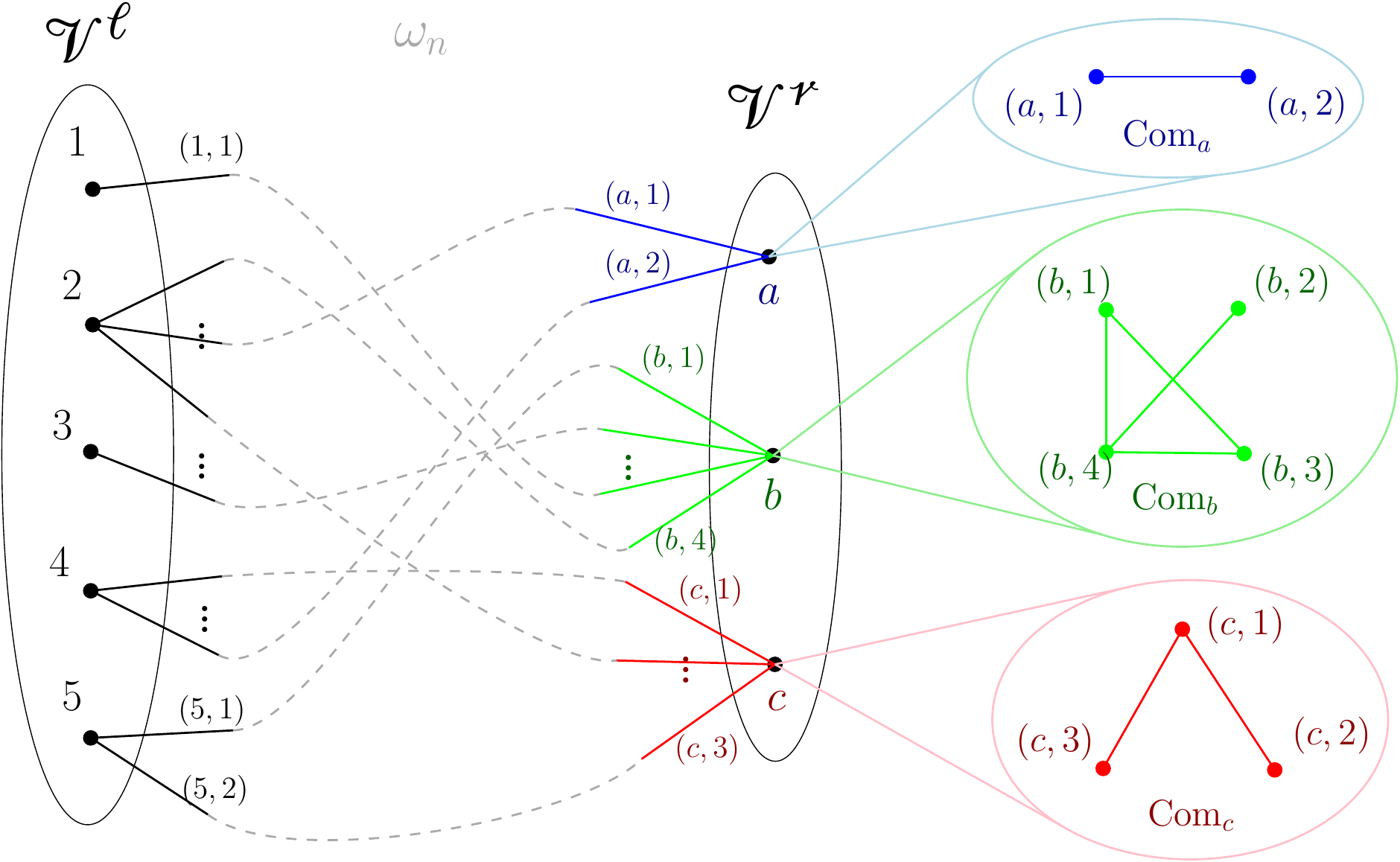}
\caption{An example of the parameters. \\ \small
Individuals form the lhs partition $\leftpar$, and their $\msl$-degree, i.e., the number of group memberships, is represented by outgoing half-edges. 
Communities form the rhs partition $\rightpar$, and each is assigned an arbitrary connected community graph. As before, we represent the $\msr$-degree, i.e., the number of community members, by outgoing half-edges. In fact, each half-edge represents a specific vertex (role) in the community graph, thus they are labeled the same way. 
In the next step, we assign group memberships (community roles) through a (bipartite) matching of the half-edges.
}
\label{fig:params_matching}
\end{figure}

\smallskip
\paragraph*{\textbf{Group memberships}}
Recall that the $\msl$-degree of $v\in\leftpar$ denotes the number of group memberships of $v$, that we intuitively think of as giving $\ldeg(v)$ membership tokens to $v$. We represent them as $\ldeg(v)$ $\msl$-half-edges incident to $v$ and label them by $(v,i)_{i\in[\ldeg(v)]}$. 
Let us denote the union of all vertices in community graphs by $\vertices(\comvect)$, that we call the set of community roles or community vertices. 
For a community vertex $j\in\vertices(\com_{a})$, we can uniquely identify $j$ by the tuple $(a,l)$, where $l$ is the vertex label of $j$ in $\com_{a}$. Now, similarly with individuals, we give each group $a\in\rightpar$ $\rdeg(a)$ community role tokens, represented by $\rdeg(a)$ $\msr$-half-edges incident to $a$ and labeled by $(a,l)_{l\in[\rdeg(a)]}$, so that we can represent $j\in\vertices(\comvect)$ by the $\msr$-half-edge $(a,l)$.

Next, we introduce the random matching of membership tokens and community role tokens. 
To ensure that the half-edges can indeed be matched, we assume and denote
\beq\label{eq:def_halfedges} \he_n := \sum_{v\in\leftpar} d_v^\msl = \sum_{a\in\rightpar} d_a^\msr. \eeq
Let $\bmatchset_n$ denote the set of all possible bijections between the $\msl$-half-edges $(v,i)_{i\in[\ldeg(v)], v\in\leftpar}$ and the $\msr$-half-edges $(a,l)_{l\in[\rdeg(a)],a\in\rightpar}$.\footnote{Equivalently, we can think of $\bmatchset_n$ as bijections between the $\msl$-half-edges and $\vertices(\comvect)$, due to each $\msr$-half-edge $(a,l)$, $l\in[\rdeg(a)], a\in\rightpar$ corresponding to a unique community vertex $j\in\vertices(\comvect)$.} Let the group memberships be determined by a \emph{uniform random bipartite matching} (bipartite configuration) $\bmatch_n \distr \Unif[\bmatchset_n]$. 

\begin{remark}[Algorithmic pairing]
\label{rem:pairingalgo}
We can produce the uniform bipartite matching $\bmatch_n$ sequentially, as follows. In each step, we pick an arbitrary unpaired half-edge, and match it to a uniform unpaired half-edge of the opposite type (so that we always match one $\msl$-half-edge and one $\msr$-half-edge). The arbitrary choices may even depend on the past of the pairing process, as long as we pair them uar with one the remaining half-edges.
\end{remark}

\begin{definition}[The ``underlying BCM'']
\label{def:bcm}
Considering the half-edges as tokens to form edges, the bipartite matching $\bmatch_n$ also determines a bipartite (multi)graph, defined as follows. For each matched pair of an $\msl$-half-edge $(v,i)$ and $\msr$-half-edge $(a,l)$, add an edge with label $(i,l)$ between $v$ and $a$. We call this edge-labeled graph the \emph{underlying bipartite configuration model} ($\BCM$). As the edge labels allow us to reconstruct the paired half-edges, the underlying $\BCM$ is an equivalent representation of the bipartite matching $\bmatch_n$, and thus encodes the group memberships.

Deleting the edge-labels, we obtain a bipartite version of the configuration model, i.e., the \emph{bipartite configuration model} with degree sequences $(\bitd^\msl,\bitd^\msr)$.
\end{definition}

\begin{figure}[hbt]
\centering
\begin{subfigure}[b]{0.68\textwidth}
	\centering
	\begin{subfigure}[b]{\textwidth}
		\centering
		\includegraphics[width=\textwidth]{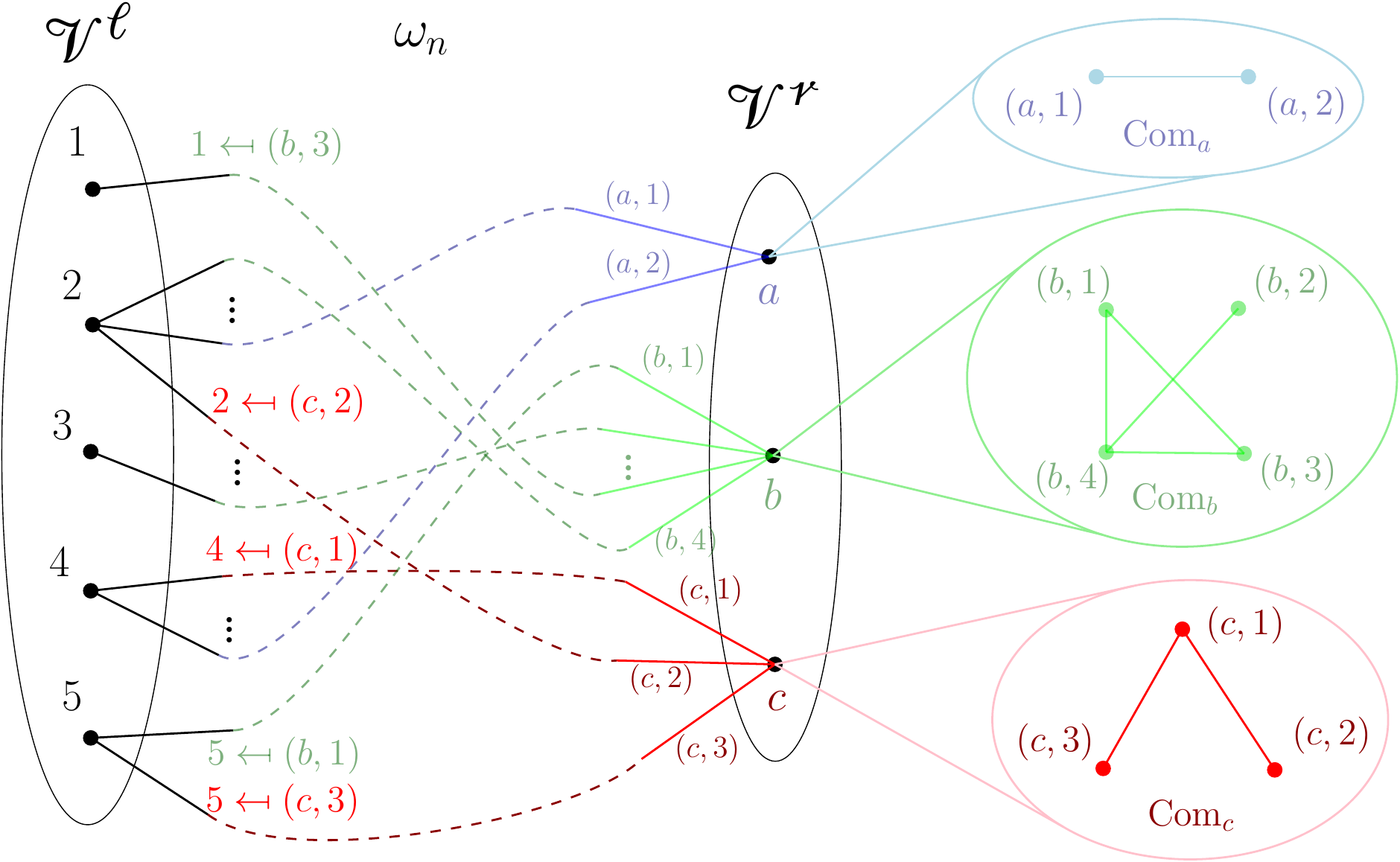}
		\caption{\emph{Community roles assigned by the matching of half-edges}\\ The community roles of $\com_c$ (highlighted) have been assigned to individuals $4$, $2$ and $5$ (in this order).}
	\end{subfigure}\\	
	\begin{subfigure}[b]{\textwidth}
		\centering
		\begin{subfigure}[b]{0.48\textwidth}
			\centering
			\includegraphics[width=0.9\textwidth]{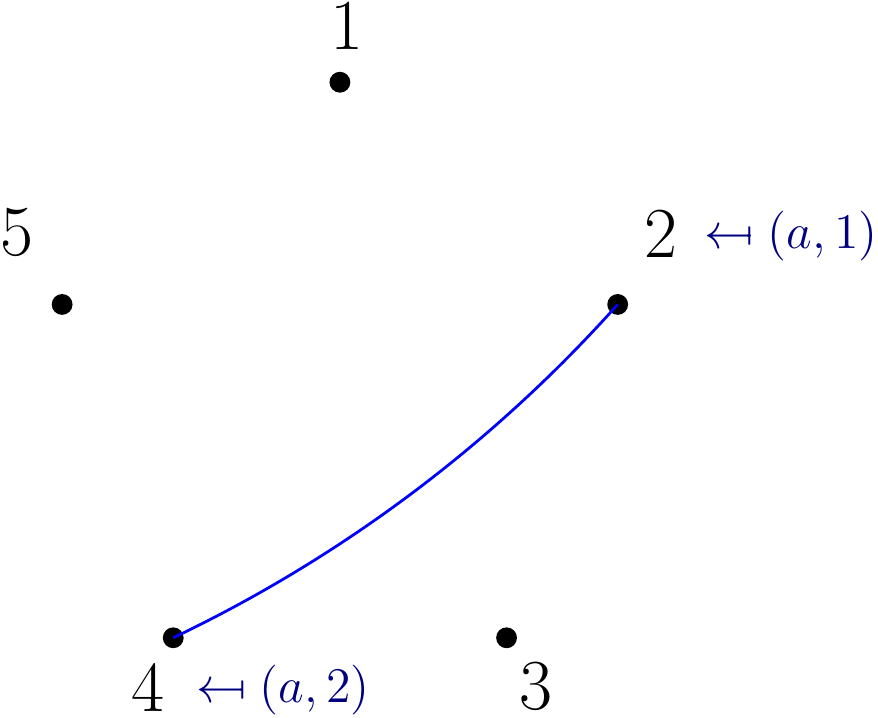}
		\end{subfigure}
		\begin{subfigure}[b]{0.48\textwidth}
			\centering
			\includegraphics[width=0.9\textwidth]{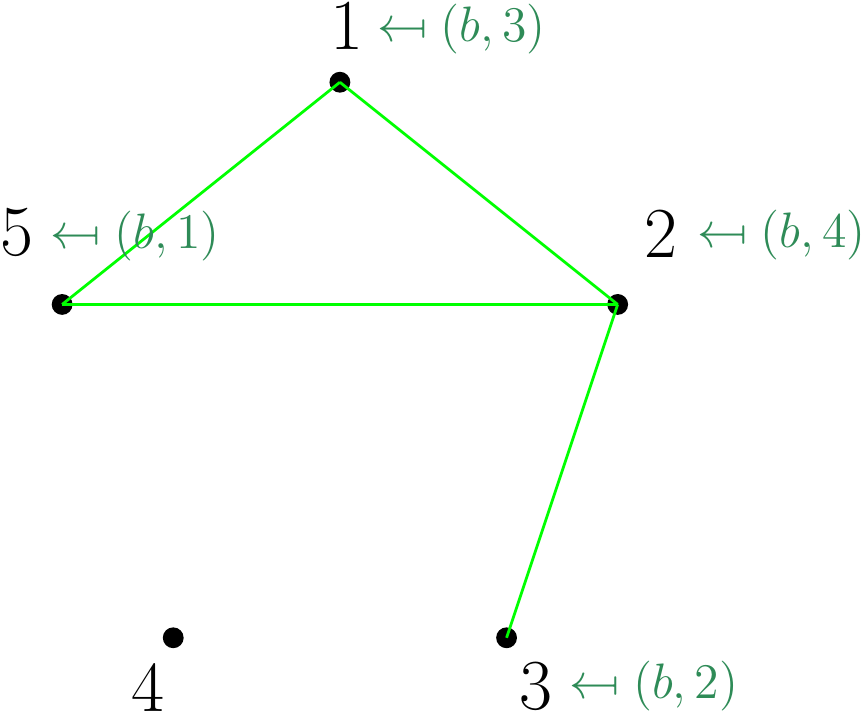}
		\end{subfigure}
		\caption{\emph{Projection of $\com_a$ and $\com_b$}\\ Obtained similarly to $\com_c$.}
	\end{subfigure}
\end{subfigure}
\hfill
\begin{subfigure}[b]{0.3\textwidth}
	\centering
	\begin{subfigure}[b]{\textwidth}
		\centering
		\includegraphics[width=0.95\textwidth]{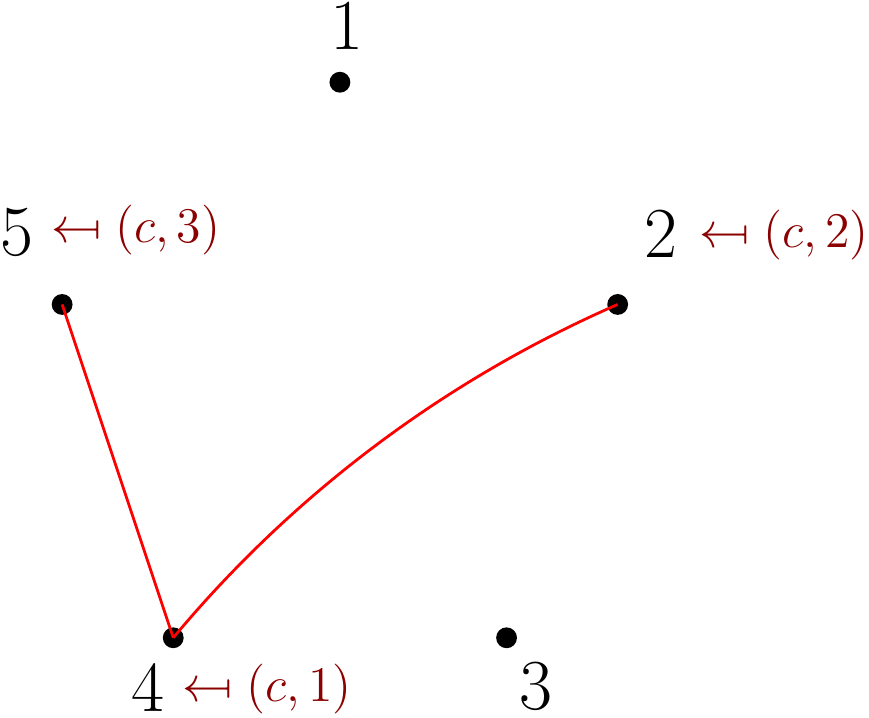}
		\caption{\emph{Projection of $\com_c$}\\ Each edge in $\com_c$ is copied to the corresponding individuals (that are assigned the community roles forming the edge), e.g.\ edge $((c,1),(c,2))$ becomes edge $(4,2)$. We do allow multigraphs.}
	\end{subfigure}\\
	\begin{subfigure}[b]{\textwidth}
		\centering
		\includegraphics[width=\textwidth]{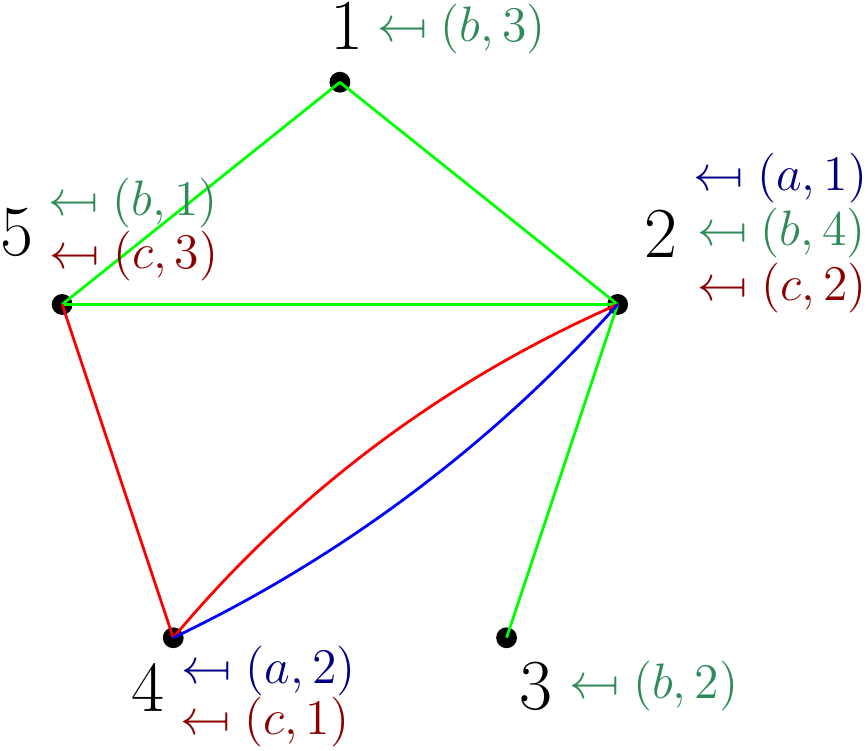}
		\caption{\emph{The resulting $\RIGC$} \\ Obtained by combining the projection of each community. We do allow multigraphs.}
	\end{subfigure}
\end{subfigure}
\caption{The community projection}
\label{fig:memberships_projection}
\end{figure}

\smallskip
\paragraph*{\textbf{The ``community projection''}}
We now introduce the \emph{community projection}, i.e., the method of projecting the community graphs to the individuals and generating the $\RIGC$ model, given the realization of the uniform(ly random) bipartite matching $\bmatch_n$. This procedure is deterministic, and the only randomness of the model comes from the choice of $\bmatch_n$, thus we can think of the community projection as an operator $\SP$ from $\bmatchset_n$ to the space of multigraphs. Alternatively, since the underlying $\BCM$ (see \cref{def:bcm}) provides an equivalent representation of the bipartite matching $\bmatch_n$, we can think of the projection as an operator\footnote{This operator can be further generalized as an operator mapping any bipartite graph, that we may interpret as the graph of group memberships, into a network.} that maps the underlying $\BCM$ into the $\RIGC$. 
We will describe the multigraph $\RIGC$ by its edge multiplicities.

Recall that the $\msr$-half-edge labeled $(a,l)$ represents the community role (community vertex) $j\in\vertices(\com_a)$ with vertex label $l$, and the $\msl$-half-edge $(v,i)$ is one of the membership tokens of $v\in\leftpar$. Then, if $(v,i)$ and $(a,l)$ are matched by $\bmatch_n$, this intuitively means one of the community roles taken by $v$ is $j$. We denote this by $v \comrole j$. Note that each community role $j$ is assigned to a \emph{unique} individual $v$, however each individual $v$ has $\ldeg(v)$ community roles $j$ that are assigned to it. We want to ``identify'' each individual with all community roles taken, and we carry this out by copying each edge between community roles $j_1, j_2 \in \vertices(\com_a)$ (for each community $a$) to the individuals $v \comrole j_1$ and $w \comrole j_2$. We emphasize that each community edge is copied individually, even when $v=w$ or when there is already an edge (or more) between $v$ and $w$; that is, we allow self-loops and multi-edges (see \cref{ss:RIGC_discussion} for a discussion on multigraphs). 

Let us denote the disjoint union of edges in all community graphs by $\edges(\comvect)$, and we refer to this set as community edges. 
Now, we shift perspective to obtain the multiplicity $X(v,w;\bmatch_n)$ of an edge $(v,w)$ (for $v,w\in\leftpar$) for a given bipartite matching $\bmatch_n$. We can do so by counting the number of community edges $(j_1,j_2)$ such that the community roles $j_1$ and $j_2$ are taken by $v$ and $w$ (in some order), formally:
\beq\label{eq:def_edge_multiplicities} X(v,w) = X(v,w;\bmatch_n) := 
\sum_{(j_1,j_2)\in\edges(\comvect)} \ind_{\{ v \comrole j_1, w \comrole j_2 \} \cup \{ v \comrole j_2, w \comrole j_1 \}}. \eeq
The \emph{random intersection graph with communities} $\RIGC(\bitd^\msl,\comvect)$ is the random multigraph given by the edge multiplicities $(X(v,w))_{v,w\in\leftpar}$ determined by the uniform(ly random) bipartite matching $\bmatch_n$.

\FloatBarrier
\subsection{Notation and assumptions}
\label{ss:cond_result}
In this section, we introduce the quantities and assumptions that are crucial throughout the paper.

\smallskip
\paragraph*{\textbf{Bipartite degrees}}
Throughout this paper, we make use of the following description of the $\msb$-degree sequences. Let $V_n^\msl \sim \Unif[\leftpar]$ and $V_n^\msr \sim \Unif[\rightpar]$ denote uniformly chosen $\msl$- and $\msr$-vertices respectively, and define
\beq\label{eq:def_ldeg_rdeg} D_n^\msl := \ldeg\bigl(V_n^\msl\bigr),
\qquad D_n^\msr := \rdeg(V_n^\msr). \eeq
Then the pmf
\begin{subequations}\label{eq:def_pq_pmf}
\beq\label{eq:def_p_k} \nth p_k := \abs{\{v\in\leftpar:\, d^\msl_v = k\}}/N_n, \eeq
for $k\in\Z^+$, describes the distribution of $D_n^\msl$ as well as the empirical distribution of $\bitd^\msl$. Similarly, we can describe $D_n^\msr$ and $\bitd^\msr$ by the pmf
\beq \label{eq:def_q_k} \nth q_k := \abs{\{a\in\rightpar:\, d_a^\msr = k\}}/M_n. \eeq
\end{subequations}
We collect the pmfs in the (infinite-dimensional) probability vectors $\nth\bitp = (\nth p_k)_{k\in\Z^+}$, $\nth\bitq = (\nth q_k)_{k\in\Z^+}$.

\smallskip
\paragraph*{\textbf{The empirical community distribution}}
Recall that $\comgraphs$ denotes the set of possible community graphs: simple, connected, finite graphs, each $H\in\comgraphs$ equipped with an arbitrary, fixed labeling using $[\abs{H}]$ as labels, so that any two community graphs that are isomorphic are labeled in the exact same way. 
For a fixed $H\in\comgraphs$, define
\beq\label{eq:def_VH} \vertices_H^\msr := 
\{a\in\rightpar:\, \com_a=H\}. \eeq
We introduce the pmf
\beq\label{eq:def_mu} \nth \mu_H := M_n^{-1} \abs{\vertices_H^\msr}, \qquad \nth {\bm\mu} = (\nth \mu_H)_{H\in\comgraphs}, \eeq
so that $\nth{\bm\mu}$ describes the empirical pmf of $\comvect$ as well as the pmf of $\com_{V_n^\msr}$, with $V_n^\msr\sim\Unif[\rightpar]$. 
For $k\in\Z^+$, define the (finite) set
\beq\label{eq:def_Hk} \comgraphs_k := \bigl\{ H\in\comgraphs:\, \abs{H}=k \bigr\}. \eeq
Note that since $d_a^\msr = \abs{\com_a}$, $\nth\bitq$ from \eqref{eq:def_q_k} can be obtained by $\nth q_k = \sum_{H\in\comgraphs_k} \nth\mu_H$.

\smallskip
\paragraph*{\textbf{Community degrees and triangles}}
Let us denote the disjoint union of vertices in all community graphs by $\vertices(\comvect)$, that we refer to as the \emph{set of community roles}. 
To a community role $j\in\vertices(\comvect)$, we assign the vector $(d_j^\msc,\tricom{j})$, where $d_j^\msc$ denotes the degree of $j$ in its community graph and $\tricom{j}$ denotes the number of triangles that $j$ is part of within its community graph. 
Let $J_n \distr \Unif[\vertices(\comvect)]$ denote a community role chosen uar.\footnote{Note that the community that $J_n$ is part of is chosen in a \emph{size-biased fashion}, and then a vertex in that community is chosen uniformly at random.} Define the random vector $(D_n^\msc,\tricomrand{n}) := (d_{J_n}^\msc,\tricom{J_n})$, keeping in mind that its coordinates are \emph{dependent}. Define the pmf 
\beq\label{eq:def_rho} \nth \ro_{(k,t)} := \frac1{\he_n} \sum_{j\in\vertices(\comvect)} \ind_{ \{ (d_{j}^\msc,\tricom{j}) = (k,t) \} }
\qquad \nth{\bm\ro} := \bigl( \nth \ro_{(k,t)} \bigr)_{k\in\Z^+,0\leq t\leq \binom{k}{2}}, \eeq
so that $\nn{\bm\varrho}$ describes the joint distribution of $(D_n^\msc,\tricomrand{n})$ as well as the empirical distribution of $(d_{j}^\msc, \tricom{j})_{j\in\vertices(\comvect)}$.

\smallskip
\paragraph*{\textbf{Projected degrees}}
For $v\in\leftpar$, its (random) projected degree, i.e., degree in the $\RIGC$ is by definition given in terms of the edge multiplicities (see \eqref{eq:def_edge_multiplicities}) as
\beq\label{eq:def_deg} d_v^\msp = \pdeg(v) := X(v,v) + \sum_{w\in\leftpar} X(v,w) = 2 X(v,v) + \sum_{w\in\leftpar,w\neq v} X(v,w). \eeq
However, it is more intuitive to look at $\pdeg(v)$ in terms of the community roles taken by $v$. Recall that each community edge incident to some $j$ such that $v \comrole j$ is added between $v$ and some other vertex, thus $j$ contributes $\cdeg(j)$ to the degree of $v$. Then 
\beq\label{eq:pdeg_decompose} \pdeg(v) = \sum_{j: v \comrole j} d_j^\msc. \eeq
Analogously to $D_n^\msl$, with $V_n^\msl \distr \Unif[\leftpar]$ as before, we define
\beq\label{eq:def_Dnproj} D_n^\msp := \pdeg(V_n^\msl). \eeq
Recall that $\pdeg(v)$ is random for each $v\in\leftpar$, due to $\bmatch_n$ being random. Thus, $D_n^\msp$ has two sources of randomness: $V_n^\msl$ and $\bmatch_n$. 
We denote the \emph{random} empirical cumulative distribution function (cdf) of $D_n^\msp$ as
\beq\label{eq:def_Fnproj} F_n^\msp(x) = F_n^\msp(x;\bmatch_n) := \frac{1}{N_n} \sum_{v\in\leftpar} \ind_{ \{ \pdeg(v) \leq x \} } 
=: \P\bigl( D_n^\msp \leq x \bcond \bmatch_n \bigr), \eeq
where $\P(\;\cdot \cond \bmatch_n)$ denotes the conditional probability with respect to (wrt) $\bmatch_n$.

\smallskip
\paragraph*{\textbf{Assumptions}} Recall \eqref{eq:def_ldeg_rdeg}, \eqref{eq:def_pq_pmf} and \eqref{eq:def_mu}. We can now summarize our assumptions on the model parameters, in particular, the conditions under which our results hold:

\begin{assumption}
\label{asmp:convergence} The conditions for the empirical distributions are summarized as follows:
\begin{enumerateA}
\item\label{cond:limit_ldeg} There exists a random variable $D^\msl$ with pmf $\bitp$ s.t.\ $\nth\bitp \to \bitp$ pointwise as $n\to\infty$, i.e.,
\beq D_n^\msl \toindis D^\msl. \eeq
\item\label{cond:mean_ldeg} $\E[D^\msl]$ is finite, and as $n\to\infty$,
\beq \E[D_n^\msl] \to \E[D^\msl]. \eeq
\item\label{cond:limit_com} There exists a probability mass function $\bm\mu$ on $\comgraphs$ such that $\nth{\bm\mu}\to\bm\mu$ pointwise as $n\to\infty$.
\begin{enumeraten}
\item\label{cond:limit_rdeg} Consequently, by $\nth q_k=\sum_{H\in\comgraphs_k} \nth\mu_H$, with the finite set $\comgraphs_k$ from \eqref{eq:def_Hk}, there exists a random variable $D^\msr$ with pmf $\bitq$ such that $\nth\bitq\to\bitq$ pointwise as $n\to\infty$, or equivalently,
\beq D_n^\msr\toindis D^\msr. \eeq
\end{enumeraten}
\item\label{cond:mean_rdeg} $\E[D^\msr]$ is finite, and as $n\to\infty$,
\beq \E[D_n^\msr] \to \E[D^\msr]. \eeq
\end{enumerateA}
\end{assumption}

\begin{remark}[Consequences of \cref{asmp:convergence}]
\label{rem:asmp_consequences}
\normalfont
We note the following:
\begin{enumeratei}
\item\label{cond:partition_ratio} By its definition in \eqref{eq:def_halfedges}, $\he_n = N_n \E[D_n^\msl] = M_n \E[D_n^\msr]$. By \cref{asmp:convergence} (\ref{cond:mean_ldeg},\ref{cond:mean_rdeg}),
\beq\label{eq:mnratio} M_n / N_n = \E[D_n^\msl] / \E[D_n^\msr] \to \E[D^\msl] / \E[D^\msr] =: \gamma \in \R^+. \eeq
\item\label{cond:limit_cdeg} Since $\nth{\bm\ro}$ (see \eqref{eq:def_rho}) can be obtained from $\nth{\bm\mu}$, \cref{asmp:convergence} \cref{cond:limit_com} also implies that there exists a random variable $(D^\msc, \Lambda^\msc)$ with pmf $\bm\ro$ such that $\nth{\bm\ro}\to\bm\ro$ pointwise as $n\to\infty$, or equivalently, $(D_n^\msc,\Lambda_n^\msc) \toindis (D^\msc,\Lambda^\msc)$.
\item\label{cond:dmax} \Cref{asmp:convergence} (\ref{cond:limit_ldeg},\ref{cond:mean_ldeg}) imply\footnote{This implication is proved for a similar setting in \cite[Exercise 6.3]{rvHofRGCNvol1}.} that $\dmax{\msl} := \max_{v\in\leftpar} d_v^\msl = o(\he_n)$, and similarly, conditions (\ref{cond:limit_rdeg},\ref{cond:mean_rdeg}) imply that $\dmax{\msr} := \max_{a\in\rightpar} d_a^\msr = o(\he_n)$. 
\end{enumeratei}
\end{remark}

\begin{remark}[Random parameters]
\label{rem:randomparam} 
The results in \cref{sss:RIGC_results} below remain valid when the sequence of parameters $(\bitd^\msl,\comvect)$ (resp., $(\bitd^\msl,\bitd^\msr)$) is random itself. In this case, we require that $N_n\to\infty$ and $M_n\to\infty$ almost surely, and we replace \cref{asmp:convergence} {(\ref{cond:limit_ldeg}-\ref{cond:mean_rdeg})} (resp., \cref{asmp:convergence} {(\ref{cond:limit_ldeg},\ref{cond:mean_ldeg},\ref{cond:limit_rdeg},\ref{cond:mean_rdeg})}) by the conditions 
$\nth{\bitp}\toinp\bitp$ pointwise, $\E[D_n^\msl \cond \bitd^\msl] \toinp \E[D^\msl]$, $\nth{\bm\mu}\toinp\bm\mu$ pointwise (resp., $\nth{\bitq}\toinp\bitq$) and $\E[D_n^\msr \cond \bitd^\msr] \toinp \E[D^\msr]$, where we assume the limiting pmfs $\bitp$ and $\bm\mu$ (resp., $\bitq$) to be \emph{deterministic}. For a similar setting in the configuration model, see \cite[Remark 7.9]{rvHofRGCNvol1}, where this is spelled out in more detail.
\end{remark}

Note that analogously to \cref{rem:asmp_consequences} \cref{cond:partition_ratio}, under
the conditions of \cref{rem:randomparam}, $M_n/N_n \toinp \gamma$.

\subsection{Results}\label{sss:RIGC_results}
In this section, we state our results on local properties of the $\RIGC$. The main result is the local weak convergence of the $\RIGC$ (defined shortly), which is equivalent to the convergence of subgraph counts (neighborhood counts). Local weak convergence also implies the convergence of degrees and local clustering, and provides some insight into the overlapping structure of communities. We use the following notions throughout this section. Recall that $V_n^\msl\sim\Unif[\leftpar]$ denotes an $\msl$-vertex chosen uar, and $\P(\;\cdot \cond \bmatch_n)$ denotes conditional probability wrt $\bmatch_n$. Let $\E_{V_n^\msl}[\;\cdot \cond \bmatch_n]$ denote the corresponding conditional expectation, that is, empirical averages for a given $\bmatch_n$.

\smallskip
\paragraph*{\textbf{Local weak convergence}} First, we give the brief definition of local weak convergence to state our results, and give a much more detailed introduction to the concept in \cref{ss:LWC_def}.

\begin{definition}[Rooted graph, rooted isomorphism and $r$-neighborhood]
\label{def:rooted_isom_rball}
\begin{enumeratei}
\item\label{stm:def:rootedgraph} We call a pair \allowbreak$(G,\groot)$ a \emph{rooted graph} if $G$ is a locally finite, connected graph and $\groot$ is a distinguished vertex of $G$.
\item\label{stm:def:rootedisom} We say that the rooted graphs $(G_1,\groot_1) \isom (G_2,\groot_2)$, are \emph{rooted isomorphic}, if there exists a graph-isomorphism between $G_1$ and $G_2$ that maps $\groot_1$ to $\groot_2$.
\item\label{stm:def:rball} For some $r\in\N$, we define $B_r(G,\groot)$, the (closed) $r$-ball around $\groot$ in $G$ or \emph{$r$-neighborhood of $\groot$ in $G$}, as the subgraph of $G$ spanned by all vertices of graph distance at most $r$ from $\groot$. We think of $B_r(G,\groot)$ as a rooted graph with root $\groot$.
\end{enumeratei}
\end{definition}

\begin{definition}[Local weak convergence in probability]
\label{def:LWCP}
Let $(G_n)_{n\in\N}$ with size $\abs{G_n} \toinp \infty$ be a sequence of random graphs,\footnote{By 
$\abs{G_n} \toinp \infty$, we mean that for all $K\in\R^+$, $\P(\abs{G_n} \geq K) \to 1$ as $n\to\infty$.} and let $U_n \cond G_n \distr\Unif[\vertices(G_n)]$. Let $(\CR,\groot)$ denote a random element (with arbitrary distribution) of the set of rooted graphs, which we call a \emph{random rooted graph}. We say that \emph{$(G_n,U_n)$ converges to $(\CR,\groot)$ in probability in the local weak convergence sense}, and denote $(G_n,U_n) \toLWCP (\CR,\groot)$, if for any fixed rooted graph $(G,\groot)$ and $r\in\N$,
\beq \bal \P\bigl( B_r(G_n,U_n) \isom B_r(G,\groot) \bcond G_n \bigr) &:= 
\frac{1}{\abs{G_n}} \sum_{u\in\vertices(G_n)} \ind_{\{ B_r(G_n,u) \isom B_r(G,\groot) \}} \\
&\toinp \P\bigl( B_r(\CR,\groot) \isom B_r(G,\groot) \bigr). 
\eal \eeq
We also say that $(\CR,\groot)$ is the \emph{local weak limit in probability} of $(G_n,U_n)$.
\end{definition}

We can now state our first main result on the local weak convergence of the $\RIGC$ model:

\begin{theorem}[Local weak convergence of the $\RIGC$]\label{thm:LWC_RIGC}
Consider $\RIGC_n = \RIGC(\bitd^\msl,\comvect)$ under \cref{asmp:convergence}. Then, with $V_n^\msl\distr\Unif[\leftpar]$, as $n\to\infty$,
\beq\label{eq:LWC_RIGC} \bigl(\RIGC_n,V_n^\msl\bigr) \toLWCP (\rCP,\groot), \eeq
where $(\rCP,\groot)$ is a random rooted graph with distribution specified in \cref{ss:construction_LWClim_RIGC}.
\end{theorem}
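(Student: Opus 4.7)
The plan is to leverage the deterministic ``community projection'' operator $\SP$ that sends the enriched underlying $\BCM$ (the bipartite matching together with the community graph marks $\com_a$ attached to the $\msr$-vertices) to the $\RIGC$, and to transfer local weak convergence from the $\BCM$ to the $\RIGC$ by a continuous-mapping argument. The paper establishes local weak convergence of the marked $\BCM$ separately in \cref{s:BCM}, so here the task reduces to checking that $\SP$ is local and continuous in the appropriate topology on rooted marked graphs, and then identifying the image of the $\BCM$-limit under $\SP$.

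The first step is to verify the following locality property of $\SP$: for every $r\in\N$, the $r$-neighborhood $B_r(\RIGC_n, V_n^\msl)$ is a deterministic function of the marked $2r$-neighborhood of $V_n^\msl$ in the $\BCM$. The factor $2$ stems directly from \eqref{eq:def_edge_multiplicities}: every $\RIGC$-edge $(v,w)$ lifts to an $\msl$-$\msr$-$\msl$ path $v$--$a$--$w$ in the $\BCM$, where $a$ is a common community and $v,w$ are assigned community roles in $\com_a$ that are adjacent in $\com_a$; consequently any $\RIGC$-path of length at most $r$ lifts to a $\BCM$-path of length at most $2r$. Conversely, given the marked $2r$-ball in $\BCM_n$, one reconstructs $B_r(\RIGC_n, V_n^\msl)$ by applying \eqref{eq:def_edge_multiplicities} inside each visited community, taking care to record the self-loops and multi-edges produced by coincidences of the matching.

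With locality in hand, I would apply it to the marked two-type branching process $(\BP, \groot)$ that serves as the local weak limit in probability of the enriched $\BCM$ at $V_n^\msl$: the $\msl$-root has $D^\msl$ children of $\msr$-type; each non-root $\msr$-child carries an independent community graph drawn in a size-biased fashion from $\bm\mu$, the incoming half-edge corresponds to a uniformly chosen role in that graph, and the remaining $\abs{\com}-1$ roles yield further $\msl$-children; each non-root $\msl$-vertex carries $\wit D^\msl$ further $\msr$-children; and so on, all independently. Applying $\SP$ to $(\BP, \groot)$ yields the random rooted graph $(\rCP, \groot)$ whose precise description is spelled out in \cref{ss:construction_LWClim_RIGC}. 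Because the event $\{B_r(\SP(\cdot), \groot) \isom B_r(G, \groot)\}$ depends only on the marked $2r$-ball, it is a continuity event for the marked local topology, and the $\BCM$-convergence immediately yields \eqref{eq:LWC_RIGC}.

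The main technical obstacle is not this transfer but the preceding local weak convergence \emph{in probability} of the enriched $\BCM$ with marks from the countably infinite set $\comgraphs$, treated in \cref{s:BCM}. Standard configuration-model path-counting arguments must be adapted to handle the bipartite two-type structure together with the $\comgraphs$-valued marks, and the size-biasing governing the community reached via a uniformly chosen $\msr$-half-edge has to be tracked carefully. Here the pointwise convergence of $\nth{\bm\mu}$ to $\bm\mu$ together with $\E[D^\msr]<\infty$ handles tightness on the mark space, while the consequences $\dmax{\msl}=o(\he_n)$ and $\dmax{\msr}=o(\he_n)$ noted in \cref{rem:asmp_consequences}\cref{cond:dmax} rule out matching coincidences within any fixed neighborhood of $V_n^\msl$ that would otherwise obstruct the limit being a tree.
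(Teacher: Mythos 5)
Your overall architecture coincides with the paper's: both proofs transfer local weak convergence from the community-marked $\BCM$ (marks recording the partition, the community graphs $\com_a$, and the edge labels $(i,l)$) to the $\RIGC$ through the deterministic projection $\wih\SP$, and both correctly identify $(\rCP,\groot)$ as the $\wih\SP$-image of the marked branching process. The difference lies in how the transfer is executed. You invoke a continuous-mapping principle: the indicator of $\{B_r(\wih\SP(\cdot),\groot)\isom B_r(H,\groot)\}$ depends only on a finite ball of the marked $\BCM$, hence is bounded and continuous for $\dloc$, so the functional form \eqref{eq:def_LWCPphi} of LWC in probability gives the conclusion. The paper instead works with the neighborhood-count form: it decomposes the event $\{B_r(\RIGC_n,V_n^\msl)\isom B_r(H,\groot)\}$ as a disjoint union over all ordered pre-images $\vect{U_i}$, groups them by edge-partitions of $B_r(H,\groot)$, and applies \cref{lem:LWC_BCM_generalized} to each pre-image. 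Since there are infinitely many pre-images (one-member communities are invisible in the projection, and communities meeting the boundary are only partially determined), interchanging the limit with this infinite sum requires the explicit truncation at maximal degree $K$ using tightness of degrees in a finite ball, see \eqref{eq:preimage_truncation}--\eqref{eq:RIGC_LWC_3}. Your shortcut is legitimate, but only because the equivalence of \eqref{eq:def_LWCP} and \eqref{eq:def_LWCPphi} for a countably infinite mark set already contains precisely this tightness argument; so the work you declare to be absent from the transfer step has in fact been relocated into that equivalence (or into the functional form of the enriched-$\BCM$ convergence, which the paper only states in neighborhood-count form via \cref{lem:LWC_BCM_generalized} and \eqref{eq:BCM_commarked_neighborhoods}). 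Saying the transfer is ``immediate'' therefore slightly misplaces where the difficulty sits: in the paper the bulk of the proof of \cref{thm:LWC_RIGC} is exactly this step.

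Two smaller points. First, the locality radius is $2r+1$, not $2r$: since $B_r$ is the subgraph \emph{spanned} by vertices at distance at most $r$, an edge of the $\RIGC$ between two individuals both at distance exactly $r$ from the root can be created by a community sitting at $\BCM$-distance $2r+1$, so the marked $2r$-ball does not determine $B_r(\RIGC_n,V_n^\msl)$; this is an easy fix but it does affect which ball your functional must be measurable with respect to. Second, your description of the limiting object (size-biased community, uniform incoming role, remaining roles spawning $\msl$-children with $\wit D^\msl$ further memberships) agrees with the construction in \cref{ss:construction_LWClim_RIGC}, so the identification of the limit is correct.
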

The proof of \cref{thm:LWC_RIGC} is completed in \cref{ss:RIGC_LWC_proof}. The construction of the local weak limit relies on the study of the underlying $\BCM$ that we carry out in \cref{s:BCM}, this is why we postpone it. We remark that the limit $(\rCP,o)$ is \emph{not a tree} (under mild conditions on $\bm\mu$ from \cref{asmp:convergence} \cref{cond:limit_com}), however it heavily relies on the locally tree-like structure of the underlying $\BCM$. 
In the following, we present some corollaries of \cref{thm:LWC_RIGC}.

\smallskip
\paragraph*{\textbf{Degrees}}
Recall \eqref{eq:def_Dnproj} and \eqref{eq:def_Fnproj}. 
We define the random variable $D^\msp$ and its distribution function
\beq\label{eq:def_Dproj} D^\msp \eqindis \sum_{i=1}^{D^\msl} D_{(i)}^\msc, \qquad F^\msp(x) := \P\bigl(D^\msp \leq x\bigr), \eeq
with $D^\msl$ from \cref{asmp:convergence} \cref{cond:limit_ldeg}, and $D_{(i)}^\msc$ are independent, identically distributed (iid) copies of $D^\msc$ from \cref{rem:asmp_consequences} \cref{cond:limit_cdeg}.

\begin{corollary}[Degrees in the $\RIGC$]\label{cor:deg} Consider $\RIGC(\bitd^\msl,\comvect)$ under the conditions of \cref{thm:LWC_RIGC}. Then, as $n\to\infty$,
\beq\label{eq:fn_degree_conv} \bigl\lVert F_n^\msp - F^\msp \bigr\rVert_\infty
= \sup_{x\in\R} \, \bigl\lvert F_n^\msp(x) - F^\msp(x) \bigr\rvert \toinp 0, \eeq
and consequently,
\beq\label{eq:degreeconv} D_n^\msp \toindis D^\msp. \eeq
\end{corollary}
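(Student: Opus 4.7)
I would deduce \cref{cor:deg} from the local weak convergence in \cref{thm:LWC_RIGC}, using two key observations. First, the projected degree $\pdeg(v)$ is a local functional of the $1$-neighborhood $B_1(\RIGC_n,v)$, since it equals the number of edge-endpoints at $v$ (with self-loops counted twice). Second, the degree of the root $\groot$ in the local weak limit $(\rCP,\groot)$ is distributed as $D^\msp$. The latter identification follows from the construction of $(\rCP,\groot)$ in \cref{ss:construction_LWClim_RIGC}: the root $\groot$ belongs to $D^\msl$ many communities, and in each of them the role assigned to $\groot$ contributes independently a community-degree distributed as $D^\msc$, whence $\deg_\rCP(\groot)\eqindis \sum_{i=1}^{D^\msl} D_{(i)}^\msc \eqindis D^\msp$.

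Next, I would establish pointwise convergence $p_n^{(k)}\toinp p^{(k)}$, where $p_n^{(k)}:=\P(\pdeg(V_n^\msl)=k\cond \bmatch_n)$ and $p^{(k)}:=\P(D^\msp=k)$. The event $\{\pdeg(V_n^\msl)=k\}$ splits into a countable union of $\{B_1(\RIGC_n,V_n^\msl)\isom (G,\groot)\}$ over rooted graphs with $\deg_G(\groot)=k$. Applying \cref{thm:LWC_RIGC} at radius $r=1$ to the finite subfamily with $\abs{\vertices(G)}\leq N$ yields
\[
\liminf_n p_n^{(k)} \geq \P\bigl(\deg_\rCP(\groot) = k,\, \abs{B_1(\rCP,\groot)} \leq N \bigr)
\]
in probability. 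Letting $N\to\infty$ and using $\abs{B_1(\rCP,\groot)}<\infty$ almost surely gives $\liminf_n p_n^{(k)}\geq p^{(k)}$. Since $\sum_k p_n^{(k)} = \sum_k p^{(k)} = 1$, a Fatou-type argument forces equality, hence $p_n^{(k)}\toinp p^{(k)}$ for every $k\in\N$.

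To obtain the uniform bound \eqref{eq:fn_degree_conv} (which also implies \eqref{eq:degreeconv}), I would use that $D_n^\msp$ and $D^\msp$ are $\N$-valued, so $F_n^\msp$ and $F^\msp$ are piecewise constant on $[k,k+1)$ and $\sup_{x\in\R}\abs{F_n^\msp(x)-F^\msp(x)}=\sup_{k\in\N}\abs{F_n^\msp(k)-F^\msp(k)}$. Given $\eps>0$, choose $K$ with $F^\msp(K)>1-\eps/2$ (possible since $F^\msp$ is a proper cdf); pointwise convergence at $K$ yields $F_n^\msp(K)>1-\eps$ whp, and monotonicity of both CDFs then gives $\sup_{k\geq K}\abs{F_n^\msp(k)-F^\msp(k)}\leq \eps$ whp. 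For $k<K$ there are only finitely many integers, so pointwise convergence at each controls $\max_{k<K}\abs{F_n^\msp(k)-F^\msp(k)}\toinp 0$, completing the proof.

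The main technical obstacle is the truncation step in the pointwise convergence, namely justifying the interchange of the limit in $n$ with the countable sum over isomorphism classes of $B_1$ with root-degree $k$. The Fatou/total-mass trick circumvents the need for an explicit uniform tightness estimate on $\pdeg(V_n^\msl)$, which is delicate since $\E[D^\msc]$ may well be infinite under \cref{asmp:convergence}.
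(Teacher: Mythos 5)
Your proposal is correct, and its skeleton matches the paper's: identify $\deg_{\rCP}(\groot)\eqindis D^\msp$ from the construction in \cref{ss:construction_LWClim_RIGC}, get pointwise convergence of the distribution from \cref{thm:LWC_RIGC}, then upgrade to sup-norm via the truncation-at-$K$ argument (your last paragraph is essentially verbatim the paper's). The one place you diverge is the middle step. The paper observes that $\varphi_x(G,\groot):=\ind_{\{\deg(\groot)\leq x\}}$ is a \emph{bounded} functional depending only on $B_1$ of the root, hence continuous on $(\graphs_\groot,\dloc)$, and applies the test-functional formulation \eqref{eq:def_LWCPphi} of LWC in probability directly to obtain $F_n^\msp(x)\toinp F^\msp(x)$ in one line. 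You instead work from the neighborhood-count formulation \eqref{eq:def_LWCP}, decompose $\{\pdeg(V_n^\msl)=k\}$ into countably many isomorphism classes of $1$-neighborhoods, and recover the sum via a Fatou/total-mass argument. This is sound — it is in effect a self-contained proof, for this particular functional, of the equivalence between \eqref{eq:def_LWCP} and \eqref{eq:def_LWCPphi} that the paper cites off the shelf — but it is more labor than needed, and your worry about ``uniform tightness on $\pdeg(V_n^\msl)$'' is moot once one notices that $\ind_{\{\deg(\groot)\leq x\}}$ is already bounded and continuous regardless of whether $\E[D^\msc]$ is finite. One small point of care in your version: $p_n^{(k)}$ is a random variable, so ``$\liminf_n p_n^{(k)}\geq p^{(k)}$ in probability'' should be read as ``for every $\eps>0$, whp $p_n^{(k)}\geq p^{(k)}-\eps$''; with that reading the Scheff\'e-type step goes through for a finite set of indices $k$ at a time, which is all you need.
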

In \cref{ss:deg_clust}, we prove \cref{cor:deg} using \cref{thm:LWC_RIGC}. However, \cref{cor:deg} can alternatively be proved independently through a first and second moment method under weaker conditions. In particular, \cref{asmp:convergence} \cref{cond:limit_com} can be replaced by $D_n^\msc\toindis D^\msc$. 
Let us also note that while \eqref{eq:degreeconv} is more intuitive, \eqref{eq:fn_degree_conv} is a stronger statement. Indeed, \eqref{eq:fn_degree_conv} implies that the \emph{random empirical degree distribution}, i.e., 
the observed degree sequence, 
is close to its theoretical limit whp.

\smallskip
\paragraph*{\textbf{Clustering}}
We proceed by studying the clustering in the $\RIGC$, in particular focusing on local clustering. For an arbitrary individual $v\in\leftpar$, let $\triproj(v)$ denote the (random) number of triangles\footnote{We also include degenerate triangles, where one or more vertices are the same, and count triangles with multiplicity, i.e., all possible ways we can choose the three edges.} that $v$ is part of in the $\RIGC$. 
We define the local clustering at $v$ as
\beq\label{eq:def_clustering} \clustering(v) := \frac{\triproj(v)}{\binom{\pdeg(v)}{2}}, \eeq
with the convention that $\clustering(v) := 0$ whenever $\pdeg(v) < 2$. 
Define the empirical local clustering coefficient $\zeta_n := \clustering(V_n^\msl)$ and 
denote its random empirical cdf by
\beq\label{eq:def_clus_empirical} F_n^\zeta(x) = F_n^\zeta(x;\bmatch_n)
:= \frac{1}{N_n} \sum_{v\in\leftpar} \ind_{ \{\clustering(v) \leq x \} } 
= \P\bigl( \zeta_n \leq x \bcond \bmatch_n \bigr). \eeq
We introduce
\beq\label{eq:def_zeta} \zeta \eqindis \biggl( \sum_{i=1}^{D^\msl} \tricomrand{(i)} \biggr)
\Big/ \binom{\sum_{i=1}^{D^\msl} D_{(i)}^\msc}{2}, 
\qquad F^\zeta(x) := \P(\zeta\leq x), \eeq
where $(D_{(i)}^\msc,\tricomrand{(i)})$ are iid copies of the random vector $(D^\msc,\tricomrand{})$ from \cref{rem:asmp_consequences} \cref{cond:limit_cdeg} and are independent of $D^\msl$ (see \cref{asmp:convergence} \cref{cond:limit_ldeg}).

\begin{corollary}[Local clustering in the $\RIGC$]\label{cor:clustering} Consider $\RIGC(\bitd^\msl,\comvect)$ under the conditions of \cref{thm:LWC_RIGC}. Then, as $n\to\infty$,
\beq\label{eq:clustering_conv} \norm{F_n^\zeta - F^\zeta}_\infty 
=  \sup_{x\in\R} \;\bigl\lvert F_n^\zeta(x) - F^\zeta(x) \bigr\rvert \toinp 0. \eeq
In particular, $\zeta_n \toindis \zeta$ and thus the average local clustering converges:
\beq\label{eq:clusteringAVG} \E\bigl[\zeta_n\bigr] \to \E[\zeta]. \eeq
\end{corollary}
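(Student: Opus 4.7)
The plan is to derive \cref{cor:clustering} from the local weak convergence \cref{thm:LWC_RIGC}. The key observation is that $\clustering(v)$, as defined in \eqref{eq:def_clustering}, depends only on $B_1(\RIGC_n,v)$: both $\pdeg(v)$ and $\triproj(v)$ are functionals of the $1$-neighborhood of $v$, since any triangle through $v$ is composed of edges $vu$, $vw$, $uw$ with $u,w$ neighbors of $v$. Moreover $\clustering(v)\in[0,1]$ is bounded. First I would verify that in the local weak limit $(\rCP,\groot)$ from \cref{ss:construction_LWClim_RIGC}, $\clustering(\groot)$ has distribution $\zeta$ as in \eqref{eq:def_zeta}: this is natural since $\groot$ belongs to $D^\msl$ independently sampled communities contributing $D_{(i)}^\msc$ to $\pdeg(\groot)$ and $\tricomrand{(i)}$ to $\triproj(\groot)$, with all triangles through $\groot$ lying in a single community, since cross-community shortcuts in $(\rCP,\groot)$ arise only via coincidences that have probability zero in the tree-like limit of the underlying $\BCM$. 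Applying \cref{thm:LWC_RIGC} to the bounded local functional $(G,o)\mapsto \ind_{\{\clustering(o)\leq x\}}$ then yields, for each fixed $x\in\R$,
\beq\label{eq:Fnzeta_pointwise}
F_n^\zeta(x) = \frac{1}{N_n}\sum_{v\in\leftpar}\ind_{\{\clustering(v)\leq x\}} \toinp \P(\zeta\leq x) = F^\zeta(x).
\eeq

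The main technical obstacle is upgrading \eqref{eq:Fnzeta_pointwise} to the uniform convergence asserted in \eqref{eq:clustering_conv}. Since $\zeta$ is supported on a countable subset of $[0,1]\cap\Q$, $F^\zeta$ is a pure-jump function and P\'olya's theorem does not directly apply. Given $\eps>0$, I would enumerate the atoms of $\zeta$ by decreasing mass and select $\{a_1,\dots,a_K\}$ with $\P(\zeta\in\{a_1,\dots,a_K\})>1-\eps$, together with a small $\delta>0$ such that $F^\zeta$ has no atoms in $(a_j-\delta, a_j)$ for any $j\leq K$. Using the finite set $\{a_j,a_j-\delta\}_{j\leq K}$ to partition $[0,1]$, applying \eqref{eq:Fnzeta_pointwise} at each of these values, and using monotonicity of $F_n^\zeta$ and $F^\zeta$, a union bound shows that $\|F_n^\zeta-F^\zeta\|_\infty$ is whp bounded by $O(\eps)$, which establishes \eqref{eq:clustering_conv}.

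Finally, \eqref{eq:clustering_conv} implies $\zeta_n\toindis\zeta$ and, since $\zeta_n\in[0,1]$ is bounded, the bounded convergence theorem yields \eqref{eq:clusteringAVG}. The hard parts are (i) identifying the limit distribution, i.e., verifying that $\clustering(\groot)$ in the yet-to-be-constructed $(\rCP,\groot)$ has exactly the law $\zeta$ of \eqref{eq:def_zeta}, which relies on the explicit branching-like construction of $(\rCP,\groot)$ and the absence of cross-community shortcuts at the root; and (ii) the discrete-CDF uniform convergence argument, which requires careful choice of a finite set of atoms to simultaneously control the supremum across all of $[0,1]$.
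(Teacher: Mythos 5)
Your overall route is the paper's: identify $\clustering(\groot)$ in the limit $(\rCP,\groot)$ with $\zeta$ from \eqref{eq:def_zeta}, apply \cref{thm:LWC_RIGC} to the bounded, local (hence continuous) indicator functionals to get pointwise convergence of the cdfs, and then upgrade to sup-norm by a discretization that accounts for the atoms of $F^\zeta$; the conclusion \eqref{eq:clusteringAVG} by bounded convergence is also as in the paper. However, your discretization step has a genuine gap. On an interval $[a_j-\delta,a_j)$ immediately to the left of a large atom $a_j$, monotonicity gives $F_n^\zeta(x)\leq F_n^\zeta(a_j-)$, and it is this \emph{left limit} that you must bound from above by $F^\zeta(a_j-)+\eps$. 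Your device of evaluating at the fixed point $a_j-\delta$ only controls $F_n^\zeta$ from \emph{below} on that interval: knowing $F_n^\zeta(a_j-\delta)\toinp F^\zeta(a_j-\delta)=F^\zeta(a_j-)$ and $F_n^\zeta(a_j)\toinp F^\zeta(a_j)$ tells you that the mass of $\zeta_n$ in $(a_j-\delta,a_j]$ converges to $\P(\zeta=a_j)$, but it does not exclude that this mass sits at points of $(a_j-\delta,a_j)$ strictly below $a_j$ (the atoms of $\zeta_n$ need not be located exactly at $a_j$ for finite $n$). In that scenario $F_n^\zeta(a_j-)-F^\zeta(a_j-)$ would be of order $\P(\zeta=a_j)$, which is not small for a large atom, and your union bound does not close.

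The fix — and what the paper actually does — is to also establish convergence of the left limits: introduce the strict-inequality functional $\overline{\psi}_x(G,\groot)=\ind_{\{\clustering(\groot)<x\}}$, which is equally bounded and local, so that \cref{thm:LWC_RIGC} gives $F_n^\zeta(x-)\toinp F^\zeta(x-)$ at each fixed $x$. With convergence of both $F_n^\zeta(z_k)$ and $F_n^\zeta(z_{k+1}-)$ on a finite grid $0=z_0<\dots<z_K=1$ chosen so that $F^\zeta(z_{k+1}-)-F^\zeta(z_k)<\eps/3$ (which exists for \emph{any} cdf, so there is no need to enumerate atoms by mass), the sandwich $F_n^\zeta(z_k)\leq F_n^\zeta(x)\leq F_n^\zeta(z_{k+1}-)$ together with $F^\zeta(z_k)\leq F^\zeta(x)\leq F^\zeta(z_{k+1}-)$ closes the argument on every interval, including those ending at a large atom. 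This is a small repair within your framework, but as written your proposal does not control $F_n^\zeta$ from above near the atoms of $\zeta$.
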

We prove \cref{cor:clustering} as a corollary of \cref{thm:LWC_RIGC} in \cref{ss:deg_clust}. However, in fact \cref{cor:clustering} still holds if we replace \cref{asmp:convergence} \cref{cond:limit_com} by the conditions \cref{asmp:convergence} \cref{cond:limit_rdeg} and \cref{rem:asmp_consequences} \cref{cond:limit_cdeg}. The intuition behind \cref{cor:clustering} is that triangles typically arise within one community, that is, triangles containing edges from different communities make a negligible contribution as the model size grows. This is due to the ``locally tree-like'' structure of the underlying $\BCM$ (see \cref{thm:LWC_BCM} below). We remark that under our general conditions, we cannot establish that the local clustering scales inversely with the degree (as in e.g.\ \cite{Bloz13,New2003configRIG}), however, the inverse degree serves as an upper bound for the clustering. In the following, we establish when the model has positive asymptotic clustering.

\begin{corollary}[Condition for positive asymptotic clustering]
\label{cor:clust_pos}
Under the conditions of Corollary \ref{cor:clustering}, the asymptotic average clustering $\E[\zeta]$ is positive if and only if $\P\bigl(\tricomrand{}\geq1\bigr)>0$, with $\tricomrand{}$ 
from \cref{rem:asmp_consequences} \cref{cond:limit_cdeg}. 
\end{corollary}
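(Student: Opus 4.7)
The plan is to argue both implications directly from the representation of $\zeta$ in \eqref{eq:def_zeta}, using only the independence of $D^\msl$ from the iid sequence $(D_{(i)}^\msc, \tricomrand{(i)})_{i \geq 1}$ together with the standing convention $D^\msl \geq 1$ a.s. No input from \cref{thm:LWC_RIGC} or \cref{cor:clustering} itself is needed beyond the description of $\zeta$.

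For the ``only if'' direction I would argue by contrapositive. If $\P(\tricomrand{} \geq 1) = 0$, then $\tricomrand{(i)} = 0$ a.s.\ for every $i \geq 1$, so the numerator $\sum_{i=1}^{D^\msl} \tricomrand{(i)}$ equals $0$ a.s.\ (the sum is finite since $D^\msl < \infty$ a.s.). The convention inherited from \eqref{eq:def_clustering}, namely that $\zeta := 0$ whenever the denominator vanishes, then forces $\zeta = 0$ a.s., so $\E[\zeta] = 0$.

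For the ``if'' direction I would exhibit a single event of positive probability on which $\zeta$ is strictly positive. Set $A := \{\tricomrand{(1)} \geq 1\}$, which has positive probability by hypothesis. Since $D^\msl \geq 1$, the index $i = 1$ contributes to both sums in \eqref{eq:def_zeta}. The deterministic bound $\tricomrand{} \leq \binom{D^\msc}{2}$ (a vertex cannot lie in a triangle unless it has at least two neighbors in its community) implies that on $A$ we have $D_{(1)}^\msc \geq 2$, whence $\binom{\sum_i D_{(i)}^\msc}{2} \geq \binom{D_{(1)}^\msc}{2} \geq 1$ and $\sum_i \tricomrand{(i)} \geq \tricomrand{(1)} \geq 1$. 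Consequently $\zeta > 0$ on $A$, and since $\zeta \geq 0$ everywhere we conclude $\E[\zeta] \geq \E[\zeta \, \ind_A] > 0$.

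The only subtle point I anticipate is the coupling between $D^\msc$ and $\tricomrand{}$ inside the joint law $\bm\varrho$ of \cref{rem:asmp_consequences}~\cref{cond:limit_cdeg}: one must not treat these two coordinates as independent, but instead use the deterministic inequality $\tricomrand{} \leq \binom{D^\msc}{2}$ to rule out a vanishing denominator on $A$. Apart from this coupling observation, the argument is essentially a one-line consequence of the definition of $\zeta$.
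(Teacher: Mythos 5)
Your proof is correct and follows essentially the same route as the paper's: necessity is immediate since $\P(\tricomrand{}\geq 1)=0$ forces the numerator of $\zeta$ to vanish, and sufficiency comes from exhibiting a positive-probability event on which a triangle is present while the denominator stays finite and positive. Your version is slightly more explicit than the paper's (which phrases the condition in terms of $\bm\mu$ charging a community graph containing a triangle and appeals to ``bounded degree with positive probability''), and your use of the deterministic bound $\tricomrand{}\leq\binom{D^\msc}{2}$ to control the denominator on the event $A$ is a clean way to handle the dependence between the two coordinates.
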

\begin{proof}[Proof of \cref{cor:clust_pos}]
Note that $\P\bigl(\tricomrand{}\geq1\bigr)>0$ happens exactly when the assigned communities are not $\bm\mu$-almost surely triangle-free with $\bm\mu$ from \cref{asmp:convergence} \cref{cond:limit_com}, i.e., $\mu_H>0$ for at least one $H\in\comgraphs$ such that $H$ contains at least one triangle. Clearly, this is a necessary condition, but also sufficient, as it implies that any vertex has a positive probability to be part of a triangle and have bounded degree at the same time.
\end{proof}

Another measure of clustering is the so-called global clustering coefficient, defined as three times the total number of triangles in the graph divided by the total number of connected triples (paths of length $2$, often called ``wedges''), formally,
\beq\label{eq:def_clust_glob} \clustering_{\mathrm{glob}} 
:= \frac{3 \triproj_{\mathrm{total}}}{\sum_{v\in\leftpar} \binom{\pdeg(v)}{2}} 
= \frac{\sum_{v\in\leftpar} \triproj(v)}{\sum_{v\in\leftpar} \binom{\pdeg(v)}{2}}. \eeq
Note the relation with the local clustering coefficient defined in \eqref{eq:def_clustering} as the ratio of $\triproj(v)$ and $\binom{\pdeg(v)}{2}$; in \eqref{eq:def_clust_glob}, we instead consider the ratio of the \emph{sum} over all individuals of these quantities. 
Also note that we can think of the global clustering coefficient as the \emph{ratio of the averages} of $\triproj(v)$ and $\binom{\pdeg(v)}{2}$:
\beq \clustering_{\mathrm{glob}} = \frac{ \frac{1}{N_n} \sum_{v\in`\leftpar} \triproj(v)}{ \frac{1}{N_n} \sum_{v\in\leftpar} \binom{\pdeg(v)}{2}}, \eeq
while the average local clustering is given by the \emph{average of the ratios} of the same quantities:
\beq \E\bigl[ \clustering(V_n^\msl) \bcond \bmatch_n \bigr] 
= \frac{1}{N_n} \sum_{v\in\leftpar} \clustering(v) 
= \frac{1}{N_n} \sum_{v\in\leftpar} \frac{\triproj(v)}{\binom{\pdeg(v)}{2}}. \eeq
While the global clustering coefficient and average local clustering coefficient grasp similar concepts, their behaviors are different. In this paper we omit the formal study the global clustering coefficient. However, we believe that its convergence requires the stronger condition of $\E\bigl[ (\pdeg(V_n^\msl))^2 \bcond \bmatch_n \bigr] = \E\bigl[ (D_n^\msp)^2 \bcond \bmatch_n \bigr] \toinp \E\bigl[ (D^\msp)^2 \bigr]$, which can be reduced to $\E\bigl[ (D_n^\msl)^2 \bigr] \to \E\bigl[ (D^\msl)^2 \bigr]$ and $\E\bigl[ (D_n^\msc)^2 \bigr] \to \E\bigl[ (D^\msc)^2 \bigr]$. We believe that under these conditions, $\clustering_{\mathrm{glob}}$ converges in probability to the \emph{ratio of expectations} of the numerator and denominator of $\zeta$ in \eqref{eq:def_zeta}, i.e., 
\beq \clustering_{\mathrm{glob}} \toinp \E\biggl[ \sum_{i=1}^{D^\msl} \tricomrand{(i)} \biggr] 
\Big/ \E\biggl[ \binom{\sum_{i=1}^{D^\msl} D_{(i)}^\msc}{2} \biggr], \eeq
which in general is different from the limiting average local clustering $\E[\zeta]$.

\smallskip
\paragraph*{\textbf{The overlapping structure}}
Next, we turn our attention to the overlapping structure of the groups, which is one of the main motivators for the $\RIGC$ model. By an overlap, we mean two (or more) groups having one (or more) individual in common. From this definition, it is clear that the internal structure of the groups do not play a role in the overlapping structure, thus the following discussion applies to the $\RIG$ model as well. 
By the construction of the model, i.e., including individuals in several communities, it is clear that overlaps are present. First, we will study the number of overlaps, and later the typical size of the overlaps as well. Let us introduce some notation. 
For $v\in\leftpar$ and $a\in\rightpar$, we say that $v$ is part of $\com_a$ and denote $v \comrole \com_a$ if $v \comrole j$ for some $j\in\com_a$. 
Let us denote 
the size of overlap between $a,b\in\rightpar,a\neq b$ by
\beq\label{eq:def_intersection_size} \intersection(a,b) := 
\sum_{v\in\leftpar} \ind_{\{v \comrole \com_a\} \cap \{v \comrole \com_b\}}. \eeq
We define the set of communities overlapping with community $a$ as
\beq\label{eq:def_overlap_neighbor} \neighbors(a) := \{b\in\rightpar :\, b\neq a, \intersection(a,b) \geq 1 \}. \eeq
For $k\in\Z^+$, we introduce the set of unordered pairs of (at least) $k$-fold overlapping groups:
\beq\label{eq:def_overlapset}
\overlapset_k = \nn \overlapset_k := \bigl\{ \{a,b\}:\, a,b\in\rightpar, a\neq b, \intersection(a,b) \geq k \bigr\}. \eeq
Note that $\overlapset_k \supseteq \overlapset_{k+1}$ for all $k\in\Z^+$ and $\overlapset_1$ contains all overlapping pairs, regardless of the size of overlap they share. Recall that $V_n^\msr\sim\Unif[\rightpar]$, 
and further recall that $\P(\;\cdot \cond \bmatch_n)$ denotes the conditional probability wrt $\bmatch_n$ and $\E[\;\cdot \cond \bmatch_n]$ denotes the corresponding conditional expectation. We can now state our result on the number of overlaps: 

\begin{proposition}[Number of overlaps]\label{prop:number_of_overlaps}
Consider $\RIGC(\bitd^\msl,\comvect)$ under \cref{asmp:convergence}. In addition, assume that, as $n\to\infty$,
\beq\label{cond:secondmom} \E\bigl[(D_n^\msl)^2\bigr] \to \E\bigl[(D^\msl)^2\bigr] < \infty. \eeq
Then, as $n\to\infty$, the average number of communities overlapping with a ``typical'' one converges: 
\beq\label{eq:number_of_overlaps} 
\frac{ 2 \abs{\overlapset_1}}{M_n} 
= \E\bigl[ \abs{\neighbors(V_n^\msr)} \bcond \bmatch_n \bigr] 
\toinp \E[D^\msr]\E[\wit D^\msl]. \eeq
\end{proposition}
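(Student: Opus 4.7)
The first equality is immediate from double-counting: each unordered pair $\{a,b\}\in\overlapset_1$ contributes once to $\neighbors(a)$ and once to $\neighbors(b)$, so $\sum_{a\in\rightpar}|\neighbors(a)|=2|\overlapset_1|$, and dividing by $M_n$ yields $\E[|\neighbors(V_n^\msr)|\cond\bmatch_n]$ by the definition of $V_n^\msr\sim\Unif[\rightpar]$.

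For the convergence, I plan to sandwich $|\neighbors(a)|$ from above by an auxiliary statistic with a \emph{deterministic} total. Let $v_{a,l}$ denote the $\msl$-vertex matched to the $\msr$-half-edge $(a,l)$ under $\bmatch_n$, and set
\[
T(a):=\sum_{l=1}^{\rdeg(a)}\bigl(\ldeg(v_{a,l})-1\bigr),
\]
the number of length-$2$ walks starting at $a$ in the underlying $\BCM$. Every $b\in\neighbors(a)$ is reached by at least one such walk, so $|\neighbors(a)|\le T(a)$. The crucial observation is that $\sum_{a\in\rightpar}T(a)=\sum_{v\in\leftpar}\ldeg(v)^2-\he_n$ is independent of $\bmatch_n$: each $\msl$-half-edge of $v$ contributes $\ldeg(v)-1$ walks no matter where it is matched. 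Combined with \eqref{cond:secondmom} and $M_n/N_n\to\E[D^\msr]/\E[D^\msl]$ from \cref{rem:asmp_consequences}, this gives
\[
\frac{1}{M_n}\sum_{a\in\rightpar}T(a)=\frac{N_n}{M_n}\bigl(\E[(D_n^\msl)^2]-\E[D_n^\msl]\bigr)\;\longrightarrow\;\E[D^\msr]\cdot\frac{\E[(D^\msl)^2]-\E[D^\msl]}{\E[D^\msl]}=\E[D^\msr]\,\E[\wit D^\msl],
\]
which already matches the claimed limit.

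It therefore remains to show that the nonnegative excess $\Delta_n:=\sum_a(T(a)-|\neighbors(a)|)$ satisfies $\Delta_n/M_n\toinp0$. Both $T(a)$ and $|\neighbors(a)|$ are determined by the rooted $2$-ball $B_2(\BCM_n,a)$, and are thus continuous in the local topology. In the local weak limit of the $\BCM$ rooted at an $\msr$-vertex (the two-type Galton-Watson tree from \cref{thm:LWC_BCM}), the $2$-ball is a genuine tree, so all second neighbors of the root are distinct, giving $|\neighbors(o)|=T(o)\eqindis\sum_{i=1}^{D^\msr}\wit D^\msl_{(i)}$ with finite mean $\E[T(o)]=\E[D^\msr]\E[\wit D^\msl]$. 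The main obstacle is that $T$ and $|\neighbors|$ are unbounded, so \cref{thm:LWC_BCM} does not apply to them directly; the plan is to circumvent this by truncation. For each fixed $K\in\N$, the truncated observables $T(a)\wedge K$ and $|\neighbors(a)|\wedge K$ are bounded and continuous in the local topology, so \cref{thm:LWC_BCM} yields
\[
\frac{1}{M_n}\sum_{a\in\rightpar}T(a)\wedge K\;\toinp\;\E[T(o)\wedge K],\qquad\frac{1}{M_n}\sum_{a\in\rightpar}|\neighbors(a)|\wedge K\;\toinp\;\E[|\neighbors(o)|\wedge K].
\]
Subtracting the first of these from the already-established deterministic limit of $M_n^{-1}\sum_aT(a)$ gives $M_n^{-1}\sum_a(T(a)-K)_+\toinp\E[(T(o)-K)_+]$, and the pointwise bound $(|\neighbors(a)|-K)_+\le(T(a)-K)_+$ transports this tail control to $|\neighbors|$. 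Sending $K\to\infty$ \emph{after} $n\to\infty$, $\E[(T(o)-K)_+]\downarrow 0$ by dominated convergence---this is exactly where \eqref{cond:secondmom} is needed, to guarantee $\E[T(o)]<\infty$---and $\E[|\neighbors(o)|\wedge K]\uparrow\E[T(o)]=\E[D^\msr]\E[\wit D^\msl]$. A standard $\eps/3$ argument combining these three pieces then concludes $\E[|\neighbors(V_n^\msr)|\cond\bmatch_n]\toinp\E[D^\msr]\E[\wit D^\msl]$.
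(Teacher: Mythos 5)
Your proof is correct, and the key step is handled by a genuinely different (and cleaner) device than the paper's. Both arguments share the same skeleton: reduce to the empirical average of $\abs{\neighbors(a)}=\abs{\boundary_2(\BCM_n,a)}$ over $a\in\rightpar$, get the bounded part via local weak convergence applied to the truncation at level $K$ (note that strictly speaking you need the $\msr$-rooted convergence $(\BCM_n,\markfunc^\msb,V_n^\msr)\toLWCP(\BP_\msr,\markfunc^\msr,\otroot)$, which is not literally \cref{thm:LWC_BCM} but follows from \cref{prop:LWC_BCM_marked} by inserting the indicator $\ind_{\{\markfunc(\groot)=\msr\}}$ and dividing by $M_n/(N_n+M_n)$, exactly as the paper does with its functional $\varphi_K$), and then control the unbounded tail. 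The paper controls the tail in expectation: it stochastically dominates $\abs{\boundary_2(\BCM_n,V_n^\msr)}$ by $\sum_{i=1}^{D_n^\msr}(d_{\pi(i)}^\msl-1)$ for a size-biased reordering $\pi$ of $\bitd^\msl$, proves $\E[d_{\pi(i+1)}^\msl]\leq(1+2\eps')\E[D_n^{\msl,\star}]$ for $i\leq\delta'N_n$ via a uniform-integrability argument, and closes with Markov's inequality. You instead dominate $\abs{\neighbors(a)}$ by the non-backtracking two-walk count $T(a)$ and exploit the fact that $\sum_{a\in\rightpar}T(a)=\sum_{v\in\leftpar}d_v^\msl(d_v^\msl-1)$ is \emph{deterministic}, so that $M_n^{-1}\sum_a T(a)=\E[D_n^\msr]\E[\wit D_n^\msl]\to\E[D^\msr]\E[\wit D^\msl]$ follows immediately from \eqref{cond:secondmom}, and subtracting the truncated LWC limit yields $M_n^{-1}\sum_a(T(a)-K)_+\toinp\E[(T(o)-K)_+]$. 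This sidesteps the size-biased-reordering estimates entirely and replaces a bound that holds only in expectation by an identity that holds surely; the paper's route, on the other hand, is the one that generalizes when the dominating statistic has no such conserved total. Your argument is complete modulo the minor rooting adjustment noted above.
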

Note that \eqref{cond:secondmom} ensures that $\E[\wit D^\msl]<\infty$, thus the rhs of \eqref{eq:number_of_overlaps} is finite. We prove \cref{prop:number_of_overlaps} in \cref{sss:overlaps_number} using local weak convergence. Intuitively, \eqref{eq:number_of_overlaps} asserts that a typical community $V_n^\msr$ overlaps with constantly many others, and thus the number of overlapping pairs of groups is linear in the total number of groups.

Next, 
we assert that the ``typical'' overlap size is $1$, which we call the \emph{single-overlap property}. There are several ways to interpret what the ``typical overlap'' means, leading to slightly different statements, as follows:

\begin{theorem}[Single-overlap property]\label{thm:overlaps}
Consider $\RIGC(\bitd^\msl,\comvect)$ under \cref{asmp:convergence}, then the single-overlap property holds, in the following ways:
\begin{enumeratei}
\item\label{perspective:vx} \emph{Vertex perspective.} For a uniform individual $V_n^\msl \distr \Unif[\leftpar]$, the communities that $V_n^\msl$ is part of whp \emph{only} overlap at $V_n^\msl$. Formally, 
as $n\to\infty$,
\beq\label{eq:typical_overlap_vx} \P\bigl( \exists \{a,b\} \in\overlapset_2 :\, V_n^\msl \comrole \com_a, V_n^\msl \comrole \com_b \bcond \bmatch_n \bigr) \toinp 0. \eeq
\item\label{perspective:group} \emph{Group perspective.} For a uniform community $V_n^\msr \distr \Unif[\rightpar]$, the communities that $V_n^\msr$ overlaps with whp only share a \emph{single} individual with $V_n^\msr$. Formally, 
as $n\to\infty$,
\beq\label{eq:typical_overlap_group} \P\bigl( \exists b\in\neighbors(V_n^\msr):\, \intersection(V_n^\msr,b) \geq 2 \bcond \bmatch_n \bigr) \toinp 0. \eeq
\item\label{perspective:global} \emph{Global perspective.} Assume additionally condition \eqref{cond:secondmom} and let $\{A_n,B_n\}\sim\Unif[\overlapset_1]$ denote a pair of communities chosen uar among all distinct pairs of overlapping communities. Then, whp their overlap is one individual. Formally, 
as $n\to\infty$,
\beq\label{eq:typical_overlap_global} 
\P\bigl( \intersection(A_n,B_n) \geq 2 \cond \bmatch_n\bigr) 
= \abs{ \overlapset_2 } \big/ \abs{ \overlapset_1 }
\toinp 0. \eeq
\end{enumeratei}
\end{theorem}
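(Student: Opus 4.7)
The three parts express the same underlying phenomenon — the $\BCM$ is locally tree-like in probability (\cref{thm:LWC_BCM}), so asymptotically two distinct communities almost never share more than one individual — sampled from three different perspectives (uniform individual, uniform community, uniform overlapping pair). The plan for \cref{perspective:vx} and \cref{perspective:group} is to invoke local weak convergence of the $\BCM$ directly; for \cref{perspective:global} I would avoid direct $4$-cycle moment computations (which would require a second moment on $D^\msr$ not assumed here) and use a clean averaging identity combined with \cref{prop:number_of_overlaps} instead.

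For \cref{perspective:vx}, first observe that the event inside the conditional probability is equivalent to $V_n^\msl$ lying on a $4$-cycle in its $2$-neighborhood in the $\BCM$: the existence of $w\neq V_n^\msl$ with $w\in\com_a\cap\com_b$ produces the $4$-cycle $V_n^\msl\text{-}a\text{-}w\text{-}b\text{-}V_n^\msl$, and the converse is immediate. Since ``the root lies on a $4$-cycle in its $2$-ball'' is a bounded local indicator and the local weak limit of the $\BCM$ identified in \cref{thm:LWC_BCM} is a bipartite Galton--Watson tree (and hence cycle-free), the definition of local weak convergence in probability applied to this indicator gives \eqref{eq:typical_overlap_vx} at once. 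Part \cref{perspective:group} is structurally identical after changing the root from $V_n^\msl$ to $V_n^\msr$, since the event in \eqref{eq:typical_overlap_group} likewise corresponds to a $4$-cycle through the root in its $2$-ball of the $\BCM$.

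For \cref{perspective:global}, the key is the deterministic double-counting identity
\[ \sum_{\{a,b\}\subseteq\rightpar,\,a\neq b}\intersection(a,b) \;=\; \tfrac12\sum_{v\in\leftpar}d^\msl_v(d^\msl_v-1), \]
which, under the additional hypothesis \eqref{cond:secondmom}, gives upon dividing by $M_n$ the deterministic limit $\tfrac12\E[D^\msr]\E[\wit D^\msl]$. On the other hand, \cref{prop:number_of_overlaps} asserts that $|\overlapset_1|/M_n$ converges in probability to the \emph{same} constant. Subtracting and noting that pairs outside $\overlapset_1$ contribute $0$, pairs in $\overlapset_1\setminus\overlapset_2$ contribute exactly $1$, and pairs in $\overlapset_2$ contribute $\intersection(a,b)-1\geq 1$, one obtains $\frac{1}{M_n}\sum_{\{a,b\}\in\overlapset_2}(\intersection(a,b)-1)\toinp 0$. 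Since every summand is at least $1$, this forces $|\overlapset_2|/M_n\toinp 0$; combined with $|\overlapset_1|/M_n\toinp\tfrac12\E[D^\msr]\E[\wit D^\msl]>0$ this yields \eqref{eq:typical_overlap_global}.

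The main obstacle for \cref{perspective:vx} and \cref{perspective:group} is packaging \cref{thm:LWC_BCM}; once the tree identification is in place, the rest is a routine application of the definition of LWCP to a bounded local indicator. For \cref{perspective:global}, the pleasant surprise is that the deterministic total-overlap identity together with \cref{prop:number_of_overlaps} is sharp enough to extract $|\overlapset_2|=o_{\sss\P}(M_n)$ for free, circumventing both a $4$-cycle moment computation and any second-moment hypothesis on $D^\msr$.
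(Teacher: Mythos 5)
Your treatment of parts \cref{perspective:vx} and \cref{perspective:group} is essentially the paper's proof: the paper also reduces both statements to the indicator of a $4$-cycle through the root in the $\BCM$, notes this is a bounded continuous functional vanishing on the tree limit, and applies \cref{thm:LWC_BCM}. The one step you skip is that \cref{thm:LWC_BCM} is stated for a root $V_n^\msb$ uniform on $\leftpar\cup\rightpar$, whereas \eqref{eq:typical_overlap_vx} and \eqref{eq:typical_overlap_group} root at $V_n^\msl$ and $V_n^\msr$; the paper bridges this with the elementary bound that the empirical average of a nonnegative functional over $\leftpar$ is at most $\tfrac{N_n+M_n}{N_n}$ times the average over $\leftpar\cup\rightpar$ (and similarly for $\rightpar$), both prefactors converging to finite constants. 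This is routine but should be said.

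For part \cref{perspective:global} your route is genuinely different from the paper's and, once repaired, arguably slicker. However, your ``deterministic double-counting identity'' is false as stated: $\sum_{\{a,b\}}\intersection(a,b)=\sum_{v\in\leftpar}\binom{m_v}{2}$, where $m_v$ is the number of \emph{distinct} communities containing $v$, and since the $\BCM$ is a multigraph an individual can take several roles in the same community, so $m_v<d_v^\msl$ with positive probability and the left side is a random variable strictly below $\tfrac12\sum_v d_v^\msl(d_v^\msl-1)$. You are saved by the fact that your argument only uses the ``$\leq$'' direction: from $\abs{\overlapset_2}\leq\sum_{\{a,b\}}\intersection(a,b)-\abs{\overlapset_1}\leq\tfrac12\sum_{v}d_v^\msl(d_v^\msl-1)-\abs{\overlapset_1}$, dividing by $M_n$ and using that the deterministic bound tends to $\tfrac12\E[D^\msr]\E[\wit D^\msl]$ under \eqref{cond:secondmom} while $\abs{\overlapset_1}/M_n$ tends in probability to the \emph{same} constant by \cref{prop:number_of_overlaps}, nonnegativity forces $\abs{\overlapset_2}/M_n\toinp0$. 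So state the inequality, not the identity; attempting to control the deficit $\sum_v\bigl(\binom{d_v^\msl}{2}-\binom{m_v}{2}\bigr)$ would in fact require extra moment/truncation work that your argument happily does not need. By contrast, the paper proves $\E[\abs{\overlapset_2}]=o(M_n)$ by a direct first-moment computation on pairs $\{a,b\}$, truncating at $d_a^\msr\leq K$ and using uniform integrability of $D_n^\msr$; that route is self-contained but longer, while yours leans entirely on \cref{prop:number_of_overlaps} (where the real work, the size-biased reordering estimate, already lives). Neither route needs a second moment on $D^\msr$, so that is not an advantage you gain over the paper.
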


We complete the proof in \cref{ss:overlaps} but discuss the statement now. The extra second moment condition \eqref{cond:secondmom} in \eqref{perspective:global} suggests a substantial difference from (\ref{perspective:vx}-\ref{perspective:group}). Indeed, (\ref{perspective:vx}-\ref{perspective:group}) establish local properties and follow directly from local weak convergence, which is not true for \eqref{perspective:global}. The difficulty is in relating the choice of the \emph{pair} $(A_n,B_n)\sim\Unif\bigl[\overlapset_1 \bigr]$ to the choice of a \emph{single} uniform vertex (and further choices in its neighborhood). This problem is nontrivial and further regularity is required. Also note that \cref{prop:number_of_overlaps} requires the same second moment condition for $\E[\wit D^\msl]$ to be finite, which is used in identifying the asymptotics for $\abs{ \overlapset_1 }$, that is the denominator in \eqref{eq:typical_overlap_global}. 
In the underlying $\BCM$ (see \cref{def:bcm}), $\abs{ \overlapset_1 }$ is the number of pairs of groups that are at graph distance $2$; however, the fluctuations of this quantity is an open problem in the case when the variance of the degrees diverges. 

\smallskip
\paragraph*{\textbf{Relation with the ``passive'' random intersection graph}} The overlapping structure may be represented as a graph on $\rightpar$ by adding an edge between a pair of groups for each individual they are both connected to. This leads to a ``dual'' random intersection graph, defined on the communities, that is sometimes referred to as the ``passive model'' in the literature \cite{GodeJaw2003}. Then the size of the overlaps $\intersection(a,b)$ and the number of overlapping pairs $\abs{\overlapset_1}$ can be seen as the edge multiplicities and total number of edges in the passive model, respectively; in particular, $2\abs{ \overlapset_1 }/M_n$ gives the average degree. Note that in this regard, applying \cref{thm:overlaps} with the roles of lhs and rhs reversed (also replacing \eqref{cond:secondmom} by $\E[(D_n^\msr)^2]\to\E[(D^\msr)^2]<\infty$ in \cref{thm:overlaps} \eqref{perspective:global}) provides some insight on the number of multi-edges in the ``active'' $\RIG$ (with complete graph communities) on the $\msl$-vertices. In turn, this provides an upper bound for the number of multi-edges in the $\RIGC$ model as well, but obtaining a lower bound is nontrivial.\footnote{Since not all pairs of community roles are connected by an edge, two individuals being together in several communities does not necessarily mean that they are connected by multiple edges, and finer properties of the measure $\bm\mu$ (see \cref{asmp:convergence} \cref{cond:limit_com}) come into play. It further complicates the situation that if we condition on having several communities that both individuals are part of, we also introduce a bias to the $\msb$-degrees involved.}

\smallskip
\paragraph*{\textbf{The local weak convergence of the underlying BCM}}
Recall the notion of local weak convergence in probability from \cref{def:LWCP}. 
\begin{theorem}[Local weak convergence of the $\BCM$]
\label{thm:LWC_BCM}
Consider $\BCM_n = \BCM(\bitd^\msl,\bitd^\msr)$ under Assumption \ref{asmp:convergence} (\ref{cond:limit_ldeg},\ref{cond:mean_ldeg},\ref{cond:limit_rdeg},\ref{cond:mean_rdeg}). Let $V_n^\msb = V_n^{\msl+\msr} \sim \Unif[\leftpar\cup\rightpar]$. Then, as $n\to\infty$,
\begin{equation}
\label{eq:BCM_LWC} \bigl(\BCM_n,V_n^\msb\bigr) \toLWCP (\BP_\mss,\otroot),
\end{equation}
where $(\BP_\mss,\otroot)$ is a mixture of two branching processes defined in \cref{ss:construction_LWClim_BCM}.
\end{theorem}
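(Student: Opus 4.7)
The plan is to apply a first- and second-moment method to the (random) local statistic
\[ Y_n(G,\groot,r) := \frac{1}{N_n+M_n} \sum_{u \in \leftpar\cup\rightpar} \ind_{\{ B_r(\BCM_n, u) \isom B_r(G,\groot) \}}, \]
for each fixed finite rooted graph $(G,\groot)$ and radius $r\in\N$. By \cref{def:LWCP}, it suffices to prove $Y_n(G,\groot,r) \toinp \P(B_r(\BP_\mss,\otroot) \isom B_r(G,\groot))$, which reduces via Chebyshev to the first-moment convergence $\E[Y_n]\to \P(B_r(\BP_\mss,\otroot) \isom B_r(G,\groot))$ and to $\Var(Y_n)\to 0$.

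For the first moment, $\E[Y_n(G,\groot,r)] = \P(B_r(\BCM_n, V_n^\msb) \isom B_r(G,\groot))$ by exchangeability. I would explore $B_r(\BCM_n,V_n^\msb)$ in breadth-first fashion using the algorithmic pairing of \cref{rem:pairingalgo}. The type of the root is $\msl$ with probability $N_n/(N_n+M_n) \to 1/(1+\gamma)$ and $\msr$ with probability $M_n/(N_n+M_n) \to \gamma/(1+\gamma)$ (cf.\ \cref{rem:asmp_consequences} \cref{cond:partition_ratio}), producing the two-component mixture of $\BP_\mss$. Conditioned on the root type, its degree is $D_n^\msl$ (resp.\ $D_n^\msr$), which by \cref{asmp:convergence} (\ref{cond:limit_ldeg},\ref{cond:limit_rdeg}) converges in distribution to $D^\msl$ (resp.\ $D^\msr$). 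Revealing the matches of its half-edges one at a time to a uniformly chosen unpaired opposite-type half-edge, each newly discovered neighbor at an $\msr$-vertex (resp.\ $\msl$-vertex) has the remaining number of unmatched half-edges distributed approximately as $\wit D_n^\msr$ (resp.\ $\wit D_n^\msl$); by \cref{asmp:convergence} (\ref{cond:mean_ldeg},\ref{cond:mean_rdeg}) these laws converge pointwise to $\wit D^\msr$ and $\wit D^\msl$, the offspring laws of subsequent generations in $\BP_\mss$. Iterating yields the alternating bipartite branching-process structure of $\BP_\mss$.

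Two error sources must be controlled during this exploration. First, the probability of a collision — a fresh half-edge pairing into a half-edge already incident to a previously explored vertex — is at most $O(1)/(\he_n - O(1))$ at each step, since only $O(1)$ half-edges have been revealed and $\he_n - O(1)$ remain available; summed over the $O(1)$ edges in a realization of $B_r(G,\groot)$, this is $o(1)$. This excludes short cycles and gives the desired tree structure whp. Second, one needs that size-biasing performed within the empirical degree sequence matches its limit; this uses $\E[D_n^\msl]\to\E[D^\msl]$ and the maximum-degree bound $\dmax{\msl}, \dmax{\msr} = o(\he_n)$ from \cref{rem:asmp_consequences} \cref{cond:dmax}, which together with the pointwise convergence of $\nth\bitp,\nth\bitq$ provide uniform integrability and hence convergence of the size-biased laws. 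The two estimates combine to yield $\E[Y_n] \to \P(B_r(\BP_\mss,\otroot) \isom B_r(G,\groot))$.

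For the second moment, write $\E[Y_n^2] = \P(B_r(\BCM_n,V_n^\msb) \isom B_r(G,\groot), B_r(\BCM_n,V_n^{\msb,\prime}) \isom B_r(G,\groot))$ with $V_n^\msb,V_n^{\msb,\prime}$ two independent uniform vertices. Since $B_{2r}(\BCM_n,V_n^\msb)$ has bounded size whp, the event $V_n^{\msb,\prime} \in B_{2r}(\BCM_n,V_n^\msb)$ has probability $O(1/(N_n+M_n)) \to 0$. On the complementary event, explore the two neighborhoods sequentially: the second exploration sees the same empirical degree/half-edge pool as the first up to a bounded correction, so the same algorithmic argument shows that its conditional distribution is asymptotically the same as the first, giving $\E[Y_n^2] \to \P(B_r(\BP_\mss,\otroot) \isom B_r(G,\groot))^2$ and hence $\Var(Y_n)\to 0$. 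The random-parameter version of \cref{rem:randomparam} follows by additionally conditioning on $(\bitd^\msl,\bitd^\msr)$, since the stated in-probability convergences suffice for the first- and second-moment arguments to go through almost surely conditionally. The principal obstacle is the careful bookkeeping needed to show that the depleted empirical degree distributions remain close to their limits throughout the bounded BFS exploration, uniformly over the $O(1)$ steps; this is where the $o(\he_n)$ maximum-degree estimate and convergence of means are essential, as they rule out rare high-degree vertices that could otherwise distort the size-biased transitions along the exploration.
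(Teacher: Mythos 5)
Your proposal follows essentially the same route as the paper: a first- and second-moment method on the empirical neighborhood counts, with the first moment computed by sequentially constructing the $r$-ball via the uniform pairing of \cref{rem:pairingalgo} while tracking the depletion of half-edges and excluding cycle-closings, and the second moment handled by splitting according to whether the two uniform roots are within distance $2r$ (the paper, like you, shows this contributes $O(N_n^{-1})$, though it does so by bounding path counts on the event that both neighborhoods match the fixed tree rather than by asserting that $B_{2r}(\BCM_n,V_n^\msb)$ is bounded whp, which is only true after that conditioning). The paper's only additional devices are presentational — it first proves the partition-marked version and uses ordered trees with Ulam--Harris labels to make the depletion bookkeeping explicit — so your argument is correct and matches the paper's proof in substance.
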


We prove \cref{thm:LWC_BCM} in \cref{ss:LWC_BCM}. Note that in particular, \cref{thm:LWC_BCM} asserts that the bipartite configuration model is \emph{locally tree-like}, a property possessed by several random graph models such as the classical configuration model or the Erd\H{o}s-R\'enyi random graph model. We also remark that while \cref{thm:LWC_BCM} and its proof are instrumental to our results on the $\RIGC$, it is also of independent interest.

\subsection{Discussion on the random intersection graph with communities}\label{ss:RIGC_discussion}
In this section\hyphenation{sec-tion}, we discuss the relation of our model to other network models and shed light on possible applications and their limitations. 

\smallskip
\paragraph*{\textbf{Parameter choices}} Working with prescribed parameters provides a wide range of applicability. As \cref{cor:deg,cor:clustering} suggest, the degree distribution and clustering of the RIGC model are tunable to match our observations of real-world networks, however the choice of $\bitd^\msl$ and $\comvect$ is hard to infer. One way of obtaining these parameters explicitly is through community-detection algorithms \cite{Fort10,FortHric16}. For theoretical research, one may be interested in generating the input parameters randomly, of which we give two examples. A simple idea is using iid random variables with distribution $D^\msl$ and $\com$ to generate the sequences $\bitd^\msl$ and $\comvect$, respectively. However, the parameters must satisfy \eqref{eq:def_halfedges}. If both $\Var(D^\msl)<\infty, \Var(D^\msr)<\infty$, we can use the algorithm proposed by Chen and Olvera-Cravioto\footnote{While their algorithm was designed for the directed configuration model, it is straightforwardly applicable to the $\BCM$.} in \cite{ChenOlcr2013} to generate the sequences $\bitd^\msl$, $\comvect$ in such a way that the sum of the $\msl$- and $\msr$-degrees are equal, meanwhile the entries are asymptotically independent.

Our second example is generating a matching pair of $\bitd^\msl$ and $\bitd^\msr$ in a \emph{dependent} way through a bipartite version of the generalized random graph \cite{BriDeiLof06}, or a Norros-Reittu model \cite{NorRei06}. Once $\bitd^\msr$ is given, we have to generate $\comvect$ in a compatible way, i.e., such that the community sizes are indeed the $\msr$-degrees. \Cref{asmp:convergence} \cref{cond:limit_com} implies that there exists a family of conditional measures
\beq\label{eq:def_mu_cond} \mu_{H \vert k} = \P\bigl(\com_a\isom H \cond d_a^\msr = \abs{\com_a} = k\bigr),
\quad \bm\mu_{\cdot\vert k} = (\mu_{H \vert k})_{H\in\comgraphs_k},
\quad (\bm\mu_{\cdot \vert k})_{k\in\Z^+}, \eeq
that describe the conditional distribution of community graphs for each given community size. In fact $\mu_{H \vert k} = \mu_H/q_k$, with $\bm\mu$ and $\bitq$ from \cref{asmp:convergence} \cref{cond:limit_com} and \cref{cond:limit_rdeg}, respectively. (We note that due to this relation, under \cref{asmp:convergence} \cref{cond:limit_rdeg}, the implication is reversible, i.e., the existence of $(\bm\mu_{\cdot \vert k})_{k\in\Z^+}$ implies \cref{asmp:convergence} \cref{cond:limit_com}.) Thus we can generate each $\com_a$ according to the measure $\bm\mu_{\cdot \vert \rdeg(a)}$, independently of each other.

\smallskip
\paragraph*{\textbf{Overlaps}} The motivation behind random intersection graphs is to generate \emph{overlapping} communities, which is clearly satisfied by \cref{prop:number_of_overlaps}. 
However, \cref{thm:overlaps} asserts the single-overlap property of the $\RIGC$ and $\RIG$ graphs, which limits the applicability of these models. For example, they may not be a good fit for scientific collaboration networks, where the same authors often collaborate on several papers and with several other collaborators. However, the $\RIGC$ may be used for social networks when the different groups of the same person tend to be separate: their family members, their colleagues, their sports club friends, etc., typically do not know each other. 

On the other hand, the single-overlap property may be used to optimize community detection; for example, consider the C-finder algorithm based on the clique percolation method \cite{Pal05clique,Pal05Nature}, that we explain briefly. A $k$-clique in a graph is a complete subgraph on $k$ vertices, and we call two $k$-cliques adjacent if they share $k-1$ vertices. A component in $k$-clique percolation is a maximal set of vertices that are connected through a chain of adjacent $k$-cliques. We remark that such components may overlap, as long as the intersection does not contain a $(k-1)$-clique; the simplest case is when the overlap has less than $k-1$ vertices. The C-finder algorithm outputs such components as possibly overlapping communities in the network. Now suppose each community of the $\RIGC$ is $3$-clique connected, i.e., built up from edge-adjacent triangles. Due to the single-overlap property of the $\RIGC$, a typical community will be a component of $3$-clique percolation by itself, i.e., no other communities will be $3$-clique adjacent to it, allowing detection with great accuracy. Thus, such an $\RIGC$ works really well in conjunction with the C-finder algorithm, either as first generating the $\RIGC$ and then detecting its communities, or running C-finder on the dataset for which one wishes to use the $\RIGC$ as a null model.

We believe that we can also use the clique percolation approach to make the $\RIGC$ a better fit than the traditional $\RIG$ for collaboration networks, in particular for scientific collaboration networks of authors and the papers they collaborate on. Rather than considering each paper as its own community, which leads to cliques with a typical overlap size larger than one, we can instead merge cliques with more than a single overlap into one community, which, in fact, uses the components of clique percolation as communities. Then we can think of each community as the collaboration network of a subgroup of authors who often collaborate with one another, and the collaboration network as a network with hierarchical structure.

\smallskip
\paragraph*{\textbf{Multigraphs}} The usual criticism that the configuration model receives is that it may produce a multigraph, and this happens whp in case the degrees have infinite (asymptotic) variance \cite[Chapter 7]{rvHofRGCNvol1}. As the $\RIGC$ uses a bipartite configuration model in its construction, we are bound to deal with multigraphs on the level of group memberships, and possibly on the level of the projection as well. One classical remedy is to condition the graph on simplicity, it is however outside the scope of this paper to study this conditional measure (which we conjecture is non-uniform) or to study whether the simplicity probability remains bounded away from $0$ as the graph size grows. Another classical approach applied to the configuration model is erasure, 
and analogously, we can define the erased $\RIGC$\footnote{Note that using the erased $\BCM$ in the construction does \emph{not} ensure that the resulting $\RIGC$ is a simple graph, multi-edges may still arise due to two individuals being part of two (or more) communities together.} by removing self-loops and collapsing multi-edges into a simple edge, i.e., redefining the edge multiplicites from \eqref{eq:def_edge_multiplicities} as $X_{v,v}'=0$ and $X_{v,w}'=\ind_{ \{X_{v,w}\geq1\} }$. In this paper, we choose to study the $\RIGC$ as a multigraph, and argue that we do not see the effect of this in the local behavior; indeed, subject to \cref{thm:LWC_RIGC}, the local weak limit of the $\RIGC$ is simple (a distribution on rooted simple graphs). This means that a typical individual will whp not see a self-loop or multi-edge in its finite neighborhood. Based on this observation, our results extend to the erased $\RIGC$ without any modification. 

\section{Preliminaries: marked graphs, ordered trees and local weak convergence}
\label{s:prelim}
In order to prove our results, we first introduce the concepts that we rely on in our proof, the most central one being \emph{local weak convergence} (LWC), a notion of convergence for sparse graph sequences. The usefulness of LWC comes from the fact that numerous properties of the finite graph(s) can be determined or approximated based on the limiting object alone \cite{BerBorChaSab14,ChenLitOlcr17}. As its name suggests, LWC describes the graph from a \emph{local} point of view; indeed, in \cref{def:LWCP}, we have defined local weak convergence in probability in terms of convergence of frequencies of graph neighborhoods. In \cref{ss:LWC_def}, we cover some of the theory behind the notion of LWC, in fact in a more general setting of \emph{marked graphs}, which are defined in \cref{ss:LWC_def} as well. 
The theory of LWC presented is partially based on \cite{AldSte04,BenLyoSch15LWC,BenSch01} and \cite[Section 1.4]{rvHofRGCNvol2}, but generalized and tailored to our needs. 
In \cref{ss:neighborhoods_trees}, we introduce some more practical tools for the proofs.

\subsection{Local weak convergence of marked graphs}
\label{ss:LWC_def}
In this section, we introduce marked graphs and the theory of LWC for deterministic and random graphs.

\smallskip
\paragraph*{\textbf{Marked graphs}} \emph{Marks} provide a general framework for indicating additional information on the edges and/or vertices of a (multi)graph, such as edge weights, edge directions, graph coloring, etc. In our case, we use marks to include edge labels of the underlying $\BCM$, as well as indicate the community graphs assigned to each $\msr$-vertex. We formally define marked graphs below.

Let $\graphs$ denote the set of all locally finite (multi)graphs on a countable (finite or countably infinite) vertex set. 
Let the set of marks $\markset$ be an arbitrary countable set that contains the special symbol $\nomark$ which is to be interpreted as ``no mark''. A marked graph is a pair $(G,\markfunc)$, where $\markfunc$ is the mark function that maps elements of $G$ into $\markset$, in particular, for $v\in\vertices(G)$, $\markfunc(v) \in \markset$, and for $e\in\edges(G)$, $\markfunc(e) \in \markset^2$. 
It is common to associate two marks to each edge, with one mark associated to each endpoint, which is often interpreted as separate marks associated to the two directions of a bi-directed edge. Since we work with the bipartite configuration model, it is more useful to think of the marks being associated to the half-edges that form the edge. 
We denote the set of graphs with marks from the mark set $\markset$ by $\graphs(\markset)$. 

We remark that any graph in $\graphs$, that we may refer to as \emph{unmarked graphs} for clarity, can be turned into a marked graph by assigning the ``no mark'' symbol $\nomark$ to each vertex and half-edge; thus results and definitions formulated for marked graphs apply straightforwardly to (unmarked) graphs.

\smallskip
\paragraph*{\textbf{Rooted marked graph, isomorphism and $r$-neighborhood}} 
We now generalize \cref{def:rooted_isom_rball} to marked graphs.
\begin{enumeratei}
\item Choose a vertex $\groot$ in a marked graph $(G,\markfunc)$ to be distinguished as the root; if $G$ is not connected, we restrict ourselves to the connected component of $\groot$, and denote the rooted marked graph by $(G,\markfunc,\groot)$. 
\end{enumeratei} 
Denote the set of rooted marked graphs by $\graphs_\groot(\markset)$. 
We call a random element of $\graphs_\groot(\markset)$ (with an \emph{arbitrary} joint distribution) a random rooted marked graph.
\begin{enumeratei}
\setcounter{enumi}{1}
\item We say that the rooted marked graphs $(G_1,\markfunc_1,\groot_1)$ and $(G_2,\markfunc_2,\groot_2)$ are isomorphic, and denote this by $(G_1,\markfunc_1,\groot_1)\isom(G_2,\markfunc_2,\groot_2)$, if there is a graph-isomorphism between them that also maps root to root and preserves marks. 
\item The (closed) ball $B_r(G,\markfunc,\groot)$ can be defined analogously to the unmarked graph ball (\cref{def:rooted_isom_rball} \cref{stm:def:rball}), by restricting the mark function to the subgraph as well.
\end{enumeratei}

\smallskip
\paragraph*{\textbf{Distance and topology}}
We are now ready to define a metric on $\graphs_\groot(\markset)$. 
For two elements $(G_1,\markfunc_1,\groot_1), (G_2,\markfunc_2,\groot_2)\allowbreak\in\graphs_\groot(\markset)$, we define the largest radius $r$ such that the $r$-neighborhoods of the roots are isomorphic:
\beq\label{eq:def_last_isom_level} \rmax:= \begin{cases}
-1 & \text{if $\markfunc_1(\groot_1)\neq\markfunc_2(\groot_2)$},\\
+\infty & \text{if $(G_1,\markfunc_1,\groot_1)\isom (G_2,\markfunc_2,\groot_2)$},\\
\sup\bigl\{ r\in\N: B_r(G_1,\markfunc_1,\groot_1)\isom B_r(G_2,\markfunc_2,\groot_2) \bigr\} & \text{otherwise}.
\end{cases}
\eeq
Then, we define the distance of the rooted marked graphs as
\beq\label{eq:def_dloc} \dloc\bigl( (G_1,\markfunc_1,\groot_1),(G_2,\markfunc_2,\groot_2)\bigr) := 2^{-\rmax} \in [0,2]. \eeq
The distance $\dloc$ is a metric on the \emph{isomorphism classes} of $\graphs_\groot(\markset)$, which turns this space into a Polish space, i.e., a complete, separable metric space (see \cite{AldSte04} or \cite[Section 1.4]{rvHofRGCNvol2}).

\smallskip
\paragraph*{\textbf{Local weak convergence of deterministic graphs}}
Let $(G_n,\markfunc_n)_{n\in\N}$, $(G_n,\markfunc_n)\in\graphs(\markset)$ be a sequence of (determinisitc) finite marked graphs such that $\abs{G_n}\to\infty$. For each $n$, let $U_n$ be a vertex of $G_n$ chosen uar, and consider the \emph{measures} defined by $(G_n,\markfunc_n,U_n)$ on $(\graphs_\groot(\markset),\dloc)$. We will define the local weak convergence of $(G_n,\markfunc_n)_{n\in\N}$ as the weak convergence of the above measures, which can be defined in the standard way. Let $(\R,\Eudist)$ denote the Polish space of the real numbers equipped with the Euclidean distance, and introduce the set of test functionals
\beq\label{eq:def_testfunctionals} \Phi = \{\varphi:\graphs_\groot(\markset)\to\R \,:\, \varphi \text{ is bounded and continuous} \}. \eeq
We remark that a special case of continuous functionals are those that only depend on a finite neighborhood of the root. 
We say that $(G_n,\markfunc_n,U_n)_{n\in\N}$ converges \emph{in the local weak convergence sense} to a (possibly random) element $(G,\markfunc,\groot) \in \graphs_\groot(\markset)$, denoted by $(G_n,\markfunc_n,U_n) \toLWC (G,\markfunc,\groot)$, if for all $\varphi\in\Phi$, as $n\to\infty$,
\beq\label{eq:def_LWCphi} \E\bigl[ \varphi(G_n,\markfunc_n,U_n) \bigr] 
\to \E\bigl[ \varphi(G,\markfunc,\groot) \bigr]. \eeq
This statement is equivalent (see e.g.\ \cite[Theorem 1.13]{rvHofRGCNvol2}) to the convergence of neighborhood counts, that is, the following statement is an equivalent definition of local weak convergence: for any $r\in\N$ and any fixed $(G',\markfunc',\groot')\in\graphs_\groot(\markset)$, as $n\to\infty$,
\beq\label{eq:def_LWC} 
\P\bigl( B_r(G_n,\markfunc_n,U_n) \isom B_r(G',\markfunc',\groot') \bigr)
\to \P\bigl(B_r(G,\markfunc,\groot) \isom B_r(G',\markfunc',\groot')\bigr). \eeq

\smallskip
\paragraph*{\textbf{Local weak convergence of random graphs}}
We now generalize \cref{def:LWCP} for marked graphs (simultaneously generalizing \eqref{eq:def_LWC} for random graphs). Let $(G_n,\markfunc_n)_{n\in\N}$, $(G_n,\markfunc_n)\in\graphs_\groot(\markset)$ be a sequence of (finite) random marked graphs (with an \emph{arbitrary} joint distribution) such that $\abs{G_n} \toinp \infty$, and let $U_n \cond (G_n,\markfunc_n) \distr \Unif[\vertices(G_n)]$ be a uniformly chosen vertex. Let $\P\bigl(\,\cdot \bcond (G_n,\markfunc_n)\bigr)$ denote conditional probability wrt the marked graph (i.e., the free variable is $U_n$). 
We say that $(G_n,\markfunc_n,U_n)_{n\in\N}$ converges \emph{in probability in the local weak sense} to a (possibly) random element $(G,\markfunc,\groot)\in\graphs_\groot(\markset)$, and denote $(G_n,\markfunc_n,U_n)\toLWCP(G,\markfunc,\groot)$, if the empirical neighborhood counts converge \emph{in probability}, i.e.,
for any fixed $r\in\N$ and fixed $(G',\markfunc',\groot')\in\graphs_\groot(\markset)$, as $n\to\infty$,
\beq\label{eq:def_LWCP} \bal
\P\bigl( B_r(G_n,\markfunc_n,U_n) \isom B_r(G',\markfunc',\groot') \bcond (G_n,\markfunc_n) \bigr) 
&:= \frac{1}{\abs{G_n}} \sum_{u\in\vertices(G_n)} \ind_{\{B_r(G_n,\markfunc_n,U_n) \isom B_r(G',\markfunc',\groot')\}} \\
&\toinp \P\bigl(B_r(G,\markfunc,\groot) \isom B_r(G',\markfunc',\groot')\bigr). \eal \eeq
We can also generalize \eqref{eq:def_LWCphi} for an equivalent definition (again, see e.g.\ \cite[Theorem 1.16]{rvHofRGCNvol2} for a proof of the equivalence) of LWC in probability. Let $\E\bigl[ \,\cdot \bcond (G_n,\markfunc_n) \bigr]$ denote conditional expectation corresponding to the conditional probability measure $\P\bigl(\,\cdot \bcond (G_n,\markfunc_n)\bigr)$. Then, $(G_n,\markfunc_n,U_n) \toLWCP (G,\markfunc,\groot)$ exactly when for all test functionals $\varphi \in \Phi$ (see \eqref{eq:def_testfunctionals}),
\beq\label{eq:def_LWCPphi} \E\bigl[ \varphi\bigl(B_r(G_n,\markfunc_n,U_n)\bigr) \bcond (G_n,\markfunc_n) \bigr] 
\toinp \E\bigl[ \varphi\bigl(B_r(G,\markfunc,\groot)\bigr) \bigr]. \eeq 

\smallskip
\paragraph*{\textbf{Extensions}}
We remark that there exist other notions of LWC for random graphs. \emph{Almost sure} local weak convergence can be defined by replacing the convergence in probability by almost sure convergence in \eqref{eq:def_LWCP}. 
Local weak convergence \emph{in distribution} is defined as
\beq \P\bigl( B_r(G_n,\markfunc_n,U_n) \isom B_r(G',\markfunc',\groot') \bigr) 
\to \P\bigl(B_r(G,\markfunc,\groot) \isom B_r(G',\markfunc',\groot')\bigr), \eeq
where we note the lack of conditioning on the lhs. 
In this paper, we use LWC in probability, as it is not too restrictive while being strong enough to imply \emph{asymptotic independence} of the neighborhoods of two uniformly chosen vertices.

\begin{remark}[Different root distributions]
\label{rem:LWC_nonunif}
In the classical definition of local weak convergence, $U_n$ is chosen \emph{uniformly} at random. However, in certain cases it is meaningful and interesting to study the convergence of subgraph counts around a vertex $W_n$ chosen according to a \emph{different} (non-uniform) distribution, for example size-biased by degree or chosen within a (large enough) subset of vertices. Our motivation is to restrict the choice of the root to one partition of the $\BCM$. 
This motivated us to emphasize the role of $U_n$ in the notation $(G_n,\markfunc_n,U_n) \toLWCP (G,\markfunc,\groot)$. With slight abuse of this notation, we shall write, for a random vertex $W_n$ with an arbitrary distribution on $\vertices(G_n)$, $(G_n,\markfunc_n,W_n) \toLWCP (G,\markfunc,\groot)$ to mean that the neighborhood counts around $W_n$ converge, i.e., for all $r\in\N$ and all $(G',\markfunc',\groot')\in\graphs_\groot(\markset)$, as $n\to\infty$,
\beq\label{eq:def_LWC_nonunif} \P\bigl( B_r(G_n,\markfunc_n,W_n) \isom B_r(G',\markfunc',\groot') \bcond (G_n,\markfunc_n) \bigr)
\toinp \P\bigl(B_r(G,\markfunc,\groot) \isom B_r(G',\markfunc',\groot')\bigr). \eeq
\end{remark}

\subsection{Practical tools: general neighborhoods and ordered trees}
\label{ss:neighborhoods_trees}
In order to calculate neighborhood\hyphenation{neigh-bor-hood} counts and prove local weak convergence of the $\RIGC$ and $\BCM$, we also rely on a few more practical tools and concepts that we introduce below.

\smallskip
\paragraph*{\textbf{General neighborhoods}}
It turns out graph balls are not the right way to look at neighborhoods when relating the $\RIGC$ to the underlying $\BCM$, which is the approach we take. Due to the arbitrary community graphs, graph distances are \emph{substantially} different in the $\RIGC$ and the underlying $\BCM$, thus graph balls in one graph typically do not map to graph balls in the other. Hence we need to generalize the notion of neighborhoods. 
In a rooted marked graph $(G,\markfunc,\groot)$, let $\wih G$ be \emph{any} connected edge-subgraph (i.e., not necessarily spanned subgraph) of $G$ that contains $\groot$. With $\markfunc$ restricted to $\wih G$, we call $(\wih G,\markfunc,\groot)$ a \emph{generalized neighborhood} of $\groot$ in $(G,\markfunc)$. 

\smallskip
\paragraph*{\textbf{Comparing neighborhoods}}
In \cref{thm:LWC_BCM}, we claim that the local weak limit of the $\BCM$ is a mixture of branching processes ($\BP$s). To prove such a statement, we have to compare neighborhoods in the $\BCM$ to $\BP$ family trees through isomorphism. 
Our approach is given by fixing an ordering of the vertices of the family tree according to a breadth-first search algorithm, and constructing the $\BCM$ in an isomorphic fashion by adding one vertex at a time in this fixed order. 
Note that for the $\BP$, the children of each vertex are already ordered, which provides the desired ordering (we explain later how we can recursively define an ordering of all vertices). To make the comparison straightforward, it will be convenient to define an ordering of neighbors of an arbitrary vertex in the $\BCM$ as well. 
Below, we formalize this method of comparison by introducing \emph{ordered trees} and \emph{ordered isomorphism}, specify the  ordering of the neighbors of any vertex in the $\BCM$, and express the usual notion of isomorphism in terms of ordered isomorphism.

\smallskip
\paragraph*{\textbf{Ordered trees}}
We introduce some terminology, inspired by branching processes, to talk about a (locally finite) rooted tree $(T,\groot)$. We define \emph{generation} $r$ as $\boundary_r(T,\groot) := \{ v\in\vertices(T): d(\groot,v) = r \}$, i.e., the set of vertices at graph distance $r$ from the root. We call the neighbors of $v$ further away from the root its \emph{children} and the neighbor of $v$ closer to the root its \emph{parent}. We call $(T,\groot)$ an \emph{ordered tree} if the children of any vertex are ordered. (Such trees are sometimes called planted plane trees or Catalan trees \cite{Drmo09}.) 

\smallskip
\paragraph*{\textbf{Ulam-Harris labeling}}
We can use the above ordering of children to recursively build a labeling of all vertices, called the Ulam-Harris labeling. 
Each label is a sequence or word on the alphabet $\N$, and we start by labeling the root as $\otroot := (0)$. Suppose a vertex in generation $r$ is labeled by the sequence $\vect{v} = (0,v_1,\ldots,v_r)$, then we label its $k\ith$ child by $(0,v_1,\ldots,v_r,k)$. We denote the generation of $\vect{v}$ by $\abs{\vect{v}} = r$ (which equals the length of the sequence minus one). 
The Ulam-Harris labeling provides an ordering of all vertices in the tree, defined as follows: if $\abs{\vect{v}}<\abs{\vect{w}}$, then $\vect{v}<\vect{w}$; if $\abs{\vect{v}}=\abs{\vect{w}}$, then we compare the sequences lexicographically. 
In the following, we reflect ordered trees (which are necessarily always rooted) in the notation by writing $\otroot$ for the root. 

We can now establish the ordering of children in rooted subtrees of the underlying $\BCM$. Recall (see \cref{ss:RIGC_def}) that the $\msl$-half-edges incident to $v\in\leftpar$ are labeled by $(v,i)_{i\in[\ldeg(v)]}$ and the $\msr$-half-edges incident to $a\in\rightpar$ are labeled by $(a,l)_{l\in[\rdeg(a)]}$, further, an edge formed by $\msl$-half-edge $(v,i)$ and $\msr$-half-edge $(a,l)$ is labeled $(i,l)$. We use the labels on the respective edges between the vertex and its children to order the children: for an $\msl$-vertex, we order its children by the \emph{first} (lhs) coordinate of the edge label, and for an $\msr$-vertex, we order its children by the \emph{second} (rhs) coordinate of the edge label. 

\smallskip
\paragraph*{\textbf{Ordered isomorphism and its relation to isomorphism}}
Let $(T_1,\markfunc_1,\otroot_1)$ and $(T_2,\markfunc_2,\otroot_2)$ denote two ordered marked trees, and consider the correspondence between their vertices with identical Ulam-Harris labels. If this correspondence is a rooted marked isomorphism,\footnote{Note that the Ulam-Harris labeling ensures that as long as each vertex has a corresponding vertex in the other tree, the graph structure is the same, and also that the root is mapped to the root. Further, we have to ensure that the corresponding vertices and edges have the same mark.} 
then we say that the trees are ordered isomorphic and denote this by $(T_1,\markfunc_1,\otroot_1) \orderedisom (T_2,\markfunc_2,\otroot_2)$.\footnote{Note the difference between the notation for isomorphism $\isom$ and ordered isomorphism $\orderedisom$.} 

In the following, we show how isomorphism can be expressed in terms of ordered isomorphism. Let $(T,\markfunc,\groot)$ be a \emph{finite} rooted marked tree (without an ordering). Then, there are \emph{finitely} many, say $I\in\Z^+$, ways to equip this tree with an ordering (or equivalently, Ulam-Harris labeling), and denote the set of all possible resulting \emph{ordered} trees by $(T^{(i)},\markfunc^{(i)},\otroot^{(i)})_{i\leq I}$. For short, we denote $\vect{T}^{(i)} := (T^{(i)},\markfunc^{(i)},\otroot^{(i)})$. By the construction of $(\vect{T}^{(i)})_{i \leq I}$, any ordered marked tree $(T',\markfunc',\otroot')$ that is isomorphic to $(T,\markfunc,\groot)$ must be ordered isomorphic to $\vect{T}^{(i)}$ for a unique $i\leq I$; in other words, we have \emph{partitioned} the isomorphism class of $(T,\markfunc,\groot)$ into the ordered isomorphism classes of each $(\vect{T}^{(i)})_{i\leq I}$. Consequently, we can write the event
\beq\label{eq:isom_decomp_event} \{ (T',\markfunc',\otroot') \isom (T,\markfunc,\groot) \} 
= \disjointunion_{i\leq I} \{ (T',\markfunc',\otroot') \orderedisom (T^{(i)},\markfunc^{(i)},\otroot^{(i)}) \} \eeq
as a \emph{disjoint} union. This implies that, if $(T',\markfunc',\otroot')$ is a random ordered marked tree,
\beq\label{eq:isom_decomp_prob} \P\bigl( (T',\markfunc',\otroot') \isom (T,\markfunc,\groot) \bigr) 
= \sum_{i\leq I} \P\bigl( (T',\markfunc',\otroot') \orderedisom (T^{(i)},\markfunc^{(i)},\otroot^{(i)}) \bigr). \eeq

\section{Local weak convergence of the bipartite configuration model}
\label{s:BCM}
In this section, we prove \cref{thm:LWC_BCM}; in fact, we prove local weak convergence for the $\BCM$ where we mark each vertex by $\msl$ or $\msr$ according to its partition.  
In \cref{ss:construction_LWClim_BCM}, we define the mark function on the $\BCM$ as well as the local weak limit of this marked graph, and we prove the LWC in \cref{ss:LWC_BCM}.

\subsection{Describing the local weak limit}
\label{ss:construction_LWClim_BCM}
First, we introduce $(\BP_\mss,\otroot)$, the local weak limit in probability of the $\BCM$. Intuitively, we expect this random rooted graph to describe the neighborhood of a vertex chosen uar over the \emph{entire graph}, while we also expect this neighborhood to look different, depending on whether we choose an $\msl$- or an $\msr$-vertex as the root. However, we have no \emph{direct} way to determine which partition our chosen vertex falls in from observing only its neighborhood. Hence, it will be useful to keep track of the lhs and rhs partitions as marks. Recall from \cref{ss:LWC_def} that we represent edges of the $\BCM$ as the pair of comprising half-edges that receive marks separately. Let $\markset^\msb=\{\msl,\msr,\nomark\}$ be the mark set we use, and mark $\msl$-vertices as $\msl$, $\msr$-vertices as $\msr$, and half-edges as the ``no mark'' symbol $\nomark$. Formally, 
\beq\label{eq:markfunc_BCM}
\markfunc_n^\msb(x) :=
\begin{cases}
\msl & \text{if $x\in\leftpar$},\\
\msr & \text{if $x\in\rightpar$},\\
\nomark & \text{if $x$ is a half-edge}.
\end{cases}
\eeq
Next, we introduce the object $(\BP_\mss,\markfunc^\mss,\otroot)$ that we below (see \cref{prop:LWC_BCM_marked}) establish as the local weak limit of the $\BCM$ equipped with the mark function $\markfunc_n^\msb$; $(\BP_\mss,\otroot)$ in \cref{thm:LWC_BCM} is obtained by omitting the mark function $\markfunc^\mss$. 
Recall $\gamma$ from \eqref{eq:mnratio}. We define $(\BP_\mss,\markfunc^\mss,\otroot)$ as a mixture of two marked $\BP$-trees $(\BP_\msl,\markfunc^\msl,\otroot)$ and $(\BP_\msr,\markfunc^\msr,\otroot)$ defined below, with mixing variable $\mss$:
\begin{subequations}\label{eq:def_BPmixing}
\begin{gather} \P(\mss=\msl) = 1/(1+\gamma), \quad \P(\mss=\msr) = \gamma/(1+\gamma), \\
(\BP_\mss,\markfunc^\mss,\otroot) \eqindis \ind_{\{\mss=\msl\}} (\BP_\msl,\markfunc^\msl,\otroot) + \ind_{\{\mss=\msr\}} (\BP_\msr,\markfunc^\msr,\otroot).
\end{gather}
\end{subequations}
Intuitively, $(\BP_\msl,\markfunc^\msl,\otroot)$ and $(\BP_\msr,\markfunc^\msr,\otroot)$ describe the neighborhood of an $\msl$- and $\msr$-vertex, respectively. With $V_n^{\msl}\sim\Unif[\leftpar]$, $V_n^{\msr}\sim\Unif[\rightpar]$ and the generalized notion of $\toLWCP$ from \cref{rem:LWC_nonunif}, $(\BCM_n,\markfunc^\msb,V_n^\msl)\toLWCP(\BP_\msl,\markfunc^\msl,\otroot)$ and $(\BCM_n,\markfunc^\msb,V_n^\msr)\toLWCP(\BP_\msr,\markfunc^\msr,\otroot)$, as revealed by the proof of \cref{prop:LWC_BCM_marked} (see \cref{ss:LWC_BCM}). Consequently, we can re-interpret the mixing variable $\mss$ as the random mark of the root.

Lastly, we define the marked $\BP$-tree $(\BP_\msl,\markfunc^\msl,\otroot)$, that we think of as a random ordered marked tree. Recall \eqref{eq:def_sizebiasing}, \cref{asmp:convergence} \cref{cond:limit_ldeg,cond:limit_rdeg}. We consider a discrete-time $\BP$, and the offspring of any two individuals are independent. 
We mark individuals in even and odd generations respectively by $\msl$ and $\msr$, and edges are ``unmarked'' (marked by $(\nomark,\nomark)$).  Generation $0$ consists of the single \emph{root} $\otroot$ that has offspring distributed as $D^\msl$; further individuals marked $\msl$ and $\msr$ have offspring distributed as $\wit D^\msl$ and $\wit D^\msr$, respectively. This concludes the dynamics of the $\BP$. 
We define $(\BP_\msr,\markfunc^\msr,\otroot)$ as the corresponding $\BP$-tree when we reverse the roles of $\msl$ and $\msr$.

\smallskip
\paragraph*{\textbf{Neighborhoods in the limit}}
To prepare for proving local weak convergence, we study the probability of observing a certain tree in a finite neighborhood (of radius $r\in\Z^+$) of the root $\otroot$ in $\BP_\mss$. Let us denote the set of ordered marked trees $(T,\markfunc_T,\otroot)$ that are possible $\BP$ family trees by $\vect{\supp}(\BP_\mss,\markfunc^\mss,\otroot)$ (the support of the distribution). Recall that vertices are marked $\msl$ or $\msr$ in an alternating fashion, i.e., marks are chosen based solely on the parity of the generation. 
Thus given the rooted tree $(T,\otroot)$, the function $\markfunc_T$ is uniquely determined by the single value $m_{\otroot} := \markfunc_T(\otroot)$. Recall that the random mark of the root in $\BP_\mss$ is denoted by $\mss=\markfunc^\mss(\otroot)$. Consequently,  $B_{r}(\BP_\mss,\markfunc^\mss,\otroot) \orderedisom B_{r}(T,\markfunc_T,\otroot)$ holds exactly when $\mss = m_{\otroot}$ and $B_{r}(\BP_\mss,\otroot) \orderedisom B_{r}(T,\otroot)$, formally,
\beq\label{eq:BPdistr} \bal \P\bigl( B_{r}(\BP_\mss,\markfunc^\mss,\otroot) \orderedisom B_{r}(T,\markfunc_T,\otroot) \bigr) 
&= \P\bigl(B_{r}(\BP_\mss,\otroot) \orderedisom B_{r}(T,\otroot) \bcond \mss = m_{\otroot} \bigr) \cdot \P(\mss = m_{\otroot} ) \\
&= \P\bigl(B_{r}(\BP_{m_{\otroot}},\otroot) \orderedisom B_{r}(T,\otroot) \bigr) \cdot \P(\mss = m_{\otroot}). \eal \eeq
By the construction of $(\BP_\mss,\markfunc^\mss,\otroot)$, 
\beq\label{eq:random_side_distr} \P(\mss = m_{\otroot}) =
\begin{cases}
\P(\mss = \msl) = 1/(1+\gamma)& \text{for $m_{\otroot} = \msl$},\\
\P(\mss = \msr) = \gamma/(1+\gamma)& \text{for $m_{\otroot} = \msr$}.
\end{cases}
\eeq
In the following, we focus on the case $m_{\otroot} = \msl$ and further expand the factor 
$\P\bigl(B_r(\BP_\msl,\otroot) \orderedisom B_r(T,\otroot) \bigr)$ from \eqref{eq:BPdistr} using characteristics of the ordered tree $(T,\otroot)$. Recall (see \cref{ss:neighborhoods_trees}) that we denote the generation of a vertex $\vect{v}$ by $\abs{\vect{v}}$, and further denote its degree by $d(\vect{v})$. Note that for $\vect{v} \neq \otroot$, $d(\vect{v})$ equals the number of its children plus one. 
Recall $\bitp$ and $\bitq$ from \cref{asmp:convergence} \cref{cond:limit_ldeg} and \cref{cond:limit_rdeg} respectively, further recall \eqref{eq:def_sizebiasing} and fix $r\in\Z^+$. 
Due to how ordered isomorphism is defined (see \cref{ss:neighborhoods_trees}), the probability of observing a given (unmarked) neighborhood in the $\BP$ equals the probability of observing the given sequence of degrees:
\beq\label{eq:BPdistr_sides} \bal
\P\bigl(B_r(\BP_\msl,\otroot) \orderedisom B_r(T,\otroot) \bigr) 
&= \P\bigl( D^\msl = d(\otroot) \bigr)
\!\!\! \prod_{\substack{\vect{v}\in T\\ 0<\abs{\vect{v}}<r\\ \abs{\vect{v}} \text{ odd}}} \!\!\! 
\P\bigl( \wit D^\msr = d(\vect{v})-1 \bigr)
\!\!\! \prod_{\substack{\vect{w}\in T\\ 0<\abs{\vect{v}}<r\\ \abs{\vect{w}} \text{ even}}} \!\!\! 
\P\bigl( \wit D^\msl = d(\vect{w})-1 \bigr) \\
&= p_{d(\otroot)}
\prod_{\substack{\vect{v}\in T\\ 0<\abs{\vect{v}}<r\\ \abs{\vect{v}} \text{ odd}}} \frac{d(\vect{v})\cdot q_{d(\vect{v})}}{\E[D^\msr]}
\prod_{\substack{\vect{w}\in T\\ 0<\abs{\vect{v}}<r\\ \abs{\vect{w}} \text{ even}}} \frac{d(\vect{w})\cdot p_{d(\vect{w})}}{\E[D^\msl]}.
\eal \eeq
Note that we only check the degrees up to generation $r-1$, since generation $r$ in the $r$-neighborhood in any tree are just leaves. 
The case $m_{\otroot} = \msr$ can easily be obtained by reversing the roles of $\msl$ and $\msr$, and consequently the roles of $\bitp$ and $\bitq$. Then combining (\ref{eq:BPdistr}-\ref{eq:BPdistr_sides}) yields
\beq\label{eq:neighborhood_lim_expand} \bal 
\P\bigl( B_r(\BP_\mss,&\markfunc^\mss,\otroot) \orderedisom B_r(T,\markfunc_T,\otroot) \bigr) \\
&= {\begin{cases}
\displaystyle
\frac{1}{1+\gamma} \; p_{d(\otroot)} 
\prod_{\substack{\vect{v}\in T\\ 0<\abs{\vect{v}}<r\\ \abs{\vect{v}} \text{ odd}}} \frac{d(\vect{v})\cdot q_{d(\vect{v})}}{\E[D^\msr]} 
\prod_{\substack{\vect{w}\in T\\ 0<\abs{\vect{w}}<r\\ \abs{\vect{w}} \text{ even}}} \frac{d(\vect{w})\cdot p_{d(\vect{w})}}{\E[D^\msl]}, 
& \text{for $m_{\otroot} = \markfunc_T(\otroot) = \msl$}; \\
\displaystyle
\frac{\gamma}{1+\gamma} \; q_{d(\otroot)} 
\prod_{\substack{\vect{v}\in T\\ 0<\abs{\vect{v}}<r\\ \abs{\vect{v}} \text{ odd}}} \frac{d(\vect{v})\cdot p_{d(\vect{v})}}{\E[D^\msl]} 
\prod_{\substack{\vect{w}\in T\\ 0<\abs{\vect{w}}<r\\ \abs{\vect{w}} \text{ even}}} \frac{d(\vect{w})\cdot q_{d(\vect{w})}}{\E[D^\msr]}, 
& \text{for $m_{\otroot} = \markfunc_T(\otroot) = \msr$}.
\end{cases} }
\eal \eeq

\subsection{Proof of local weak convergence}\label{ss:LWC_BCM} In this section, we prove the following LWC of the marked $\BCM$:
\begin{proposition}[Local weak convergence of the \emph{marked} $\BCM$]
\label{prop:LWC_BCM_marked}
Consider $\BCM_n = \BCM(\bitd^\msl,\bitd^\msr)$ under \cref{asmp:convergence} (\ref{cond:limit_ldeg},\ref{cond:mean_ldeg},\ref{cond:limit_rdeg},\ref{cond:mean_rdeg}). Recall the mark function $\markfunc_n^\msb$ (encoding the partition of each vertex) from \eqref{eq:markfunc_BCM} and let $V_n^\msb = V_n^{\msl+\msr} \distr \Unif[\leftpar\cup\rightpar]$. Then, as $n\to\infty$,
\beq \bigl( \BCM_{n}, \markfunc_n^\msb, V_n^\msb \bigr) \toLWCP (\BP_\mss,\markfunc^\mss,\otroot). \eeq
\end{proposition}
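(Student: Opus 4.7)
The plan is to verify the convergence in probability of the empirical $r$-neighborhood frequencies by a first and second moment argument, using the sequential pairing of \cref{rem:pairingalgo} to effectively run a breadth-first search (BFS) from a uniformly chosen root. Since $V_n^\msb\sim\Unif[\leftpar\cup\rightpar]$ lies in $\leftpar$ with probability $N_n/(N_n+M_n)\to 1/(1+\gamma)$ and in $\rightpar$ with complementary probability $\gamma/(1+\gamma)$, matching $\P(\mss=\msl)$ and $\P(\mss=\msr)$, the claim reduces to proving $(\BCM_n,\markfunc_n^\msb,V_n^\msl)\toLWCP(\BP_\msl,\markfunc^\msl,\otroot)$ together with the symmetric $\msr$-analogue. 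By reversing the roles of $\leftpar$ and $\rightpar$ it is enough to treat the $\msl$-case. Further, by the finite decomposition \eqref{eq:isom_decomp_prob} of each unordered isomorphism class into ordered-isomorphism classes, it suffices to prove convergence in probability of the empirical frequency of ordered $r$-neighborhoods that are ordered-isomorphic to any fixed $(T,\markfunc_T,\otroot)$ with $\markfunc_T(\otroot)=\msl$.

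For the first moment, I would compute the unconditional probability $\P(B_r(\BCM_n,V_n^\msl)\orderedisom B_r(T,\otroot))$ by sequentially revealing $\bmatch_n$ in BFS order. The root's $\msl$-degree equals $d(\otroot)$ with probability $\nth p_{d(\otroot)}\to p_{d(\otroot)}$, and each revealed $\msl$-half-edge is paired with a uniformly chosen remaining $\msr$-half-edge, so the newly incident $\msr$-vertex has degree $k$ with probability $k\nth q_k/\E[D_n^\msr]+O(1/n)\to k q_k/\E[D^\msr]=\P((D^\msr)^\star=k)$, contributing $\wit D^\msr=k-1$ new children at the next generation. Iterating through the alternating generations up to depth $r-1$, and doing the analogous step for $\wit D^\msl$ at odd generations, reproduces exactly the product in \eqref{eq:neighborhood_lim_expand}. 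The no-collision event (all newly revealed vertices at depth $\leq r$ are distinct) holds with probability $1-O(1/n)$ because only $O_\P(1)$ half-edges are revealed; here \cref{rem:asmp_consequences} \cref{cond:dmax} gives $\dmax{\msl},\dmax{\msr}=o(\he_n)$, guaranteeing that the pairing dynamics converge step-by-step to the size-biased distributions.

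To upgrade expectation to convergence in probability, I would bound the variance via the second moment, which corresponds to two independent uniform $\msl$-roots $V_n^\msl$ and $V_n^{\msl\prime}$ both having the prescribed ordered $r$-neighborhood. Running the joint BFS, I would show that with probability $1-o(1)$ the two explorations are vertex-disjoint: after the first reveals $O_\P(1)$ half-edges, each new pairing in the second BFS lands in the revealed set with probability $O(1/n)$. Conditional on disjointness, the joint distribution factorises asymptotically into two independent copies of the single-root BFS of the previous step, so $\E[(\text{empirical freq.})^2]\to p_T^2$ with $p_T$ the limit in \eqref{eq:neighborhood_lim_expand}, killing the variance.

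The main obstacle I anticipate is the second-moment step, specifically establishing uniform-in-$n$ tightness for the number of half-edges exposed by the BFS at depth $r$. Under the minimal hypotheses of \cref{asmp:convergence}, the size-biased offspring $\wit D^\msl,\wit D^\msr$ need not have finite mean, so a naive expectation bound on the BFS size fails. However, the limit $\BP_\msl$ is a.s.\ finite at fixed depth $r$ (since offspring are a.s.\ finite), and this pointwise finiteness can be transferred to the finite-$n$ BFS by a truncation argument: one conditions on the event that all generations up to $r$ have size at most $K$, controls the BFS dynamics on this event using the first-moment step, and then lets $K\to\infty$ after $n\to\infty$. Making this transfer precise, while simultaneously handling the $O(1/n)$ collision error over a growing (in $K$) number of pairing steps, is where the bulk of the work lies.
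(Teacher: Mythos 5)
Your proposal follows essentially the same route as the paper's proof: reduce to fixed ordered trees via the ordered-isomorphism decomposition \eqref{eq:isom_decomp_prob}, compute first and second moments of the empirical neighborhood frequency by sequentially pairing half-edges in BFS order (quantifying the depletion effect), show the two explorations from independent uniform roots are whp disjoint and asymptotically independent, and conclude by Chebyshev plus completeness of measure. The obstacle you anticipate in your final paragraph does not actually arise: since the convergence is verified separately for each \emph{fixed} finite ordered tree $T$ (all other neighborhoods being handled afterwards by completeness of measure), the number of half-edges exposed in both the first- and second-moment computations is bounded by a deterministic constant depending only on $(r,T)$, so no tightness of the BFS size, no uniform integrability of $\wit D^\msl,\wit D^\msr$, and no $K\to\infty$ truncation are needed.
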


Note that \cref{thm:LWC_BCM} asserts the same convergence without the mark function, thus subject to \cref{prop:LWC_BCM_marked}, \cref{thm:LWC_BCM} immediately follows.

\begin{proof}[Proof of \cref{prop:LWC_BCM_marked}]
Recall that $\bigl( \BCM_{n},\markfunc_n^\msb,V_n^\msb \bigr)$ has two sources of randomness: the graph realization determined by the bipartite matching $\bmatch_n$ and the choice of $V_n^\msb$, and that
$\P(\,\cdot \cond \bmatch_n)$ denotes the conditional probability wrt the graph realization. 
Further recall that $\vect{\supp}(\BP_\mss,\markfunc^\mss,\otroot)$ denotes the set of all possible ordered family trees produced by $\BP_\mss$. We claim that for all $r\in\N$ and all \emph{ordered} family trees $(T,\markfunc_T,\otroot) \in \vect{\supp}(\BP_\mss,\markfunc^\mss,\otroot)$, as $n\to\infty$,
\beq\label{eq:cond:neighborhoods_marked_ordered} \bal
&\P\bigl( B_r(\BCM_n,\markfunc_n^\msb,V_n^\msb) \orderedisom B_r(T,\markfunc_T,\otroot) \bcond (\BCM_n,\markfunc_n^\msb) \bigr) \\
& := \frac{1}{\abs{\leftpar\cup\rightpar}} \sum_{u\in\leftpar\cup\rightpar} \ind_{\{ B_r(\BCM_n,\markfunc_n^\msb,u) \orderedisom B_r(T,\markfunc_T,\otroot) \}} 
\toinp \P\bigl( B_r(\BP_\mss,\markfunc^\mss,\otroot) \orderedisom B_r(T,\markfunc_T,\otroot) \bigr). \eal \eeq
Below, we prove this statement via a first and second moment method, but first we show why this is sufficient for completing the proof of \cref{prop:LWC_BCM_marked}. Let $\supp(\BP_\mss,\markfunc^\mss,\otroot)$ denote the set of \emph{unordered} versions of the family trees in $\vect{\supp}(\BP_\mss,\markfunc^\mss,\otroot)$. Subject to \eqref{eq:cond:neighborhoods_marked_ordered}, it follows by (\ref{eq:isom_decomp_event}-\ref{eq:isom_decomp_prob}) that for all unordered family trees $(T,\markfunc_T,\groot) \in \supp(\BP_\mss,\markfunc^\mss,\otroot)$ and all $r\in\N$, as $n\to\infty$,
\beq\label{eq:cond:neighborhoods_marked} \bal 
&\P\bigl( B_r(\BCM_n,\markfunc_n^\msb,V_n^\msb) \isom B_r(T,\markfunc_T,\groot) \bcond (\BCM_n,\markfunc_n^\msb) \bigr) 
\toinp \P\bigl( B_r(\BP_\mss,\markfunc^\mss,\otroot) \isom B_r(T,\markfunc_T,\groot) \bigr). \eal \eeq
Then, for any $r\in\N$, it also immediately follows by completeness of measure (for a more detailed argument, see \cite[Theorem 1.17]{rvHofRGCNvol2}) that for a rooted marked graph $(G,\markfunc,\groot)$ \emph{not} in $\supp(\BP_\mss,\markfunc^\mss,\otroot)$, its frequency as a neighborhood must converge to $0$ in probability. That is, \eqref{eq:cond:neighborhoods_marked} follows for \emph{all} rooted marked graphs $(G,\markfunc,\groot) \in \graphs_\groot(\markset^\msb)$, which is how we defined LWC in probability in \eqref{eq:def_LWCP}. This concludes the proof of \cref{prop:LWC_BCM_marked} subject to \eqref{eq:cond:neighborhoods_marked_ordered}. In the following, we prove \eqref{eq:cond:neighborhoods_marked_ordered}.

\subsubsection{First moment}\label{sss:LWC_firstmom} 
Let us fix an arbitrary ordered family tree $\vect{T} := (T,\markfunc_T,\otroot)\in\supp(\BP_\mss,\markfunc^\mss,\otroot)$ and an integer $r\in\N$. For convenience, we introduce the event, for some $v\in \vertices^\msb := \leftpar\cup\rightpar$, 
\beq\label{eq:neighborhood_event} \ballevent_{\vect{T}}^r(v) := \bigl\{ B_r(\BCM_n,\markfunc^\msb,v) \orderedisom B_r(T,\markfunc_T,\otroot) \bigr\}, \eeq
which is the event on the lhs of \eqref{eq:cond:neighborhoods_marked_ordered}. 
We compute the expected neighborhood frequency
\beq\label{eq:neighborhood_1stmoment} \bal \E\bigl[ \P\bigl( \ballevent_{\vect{T}}^r(V_n^\msb) \bcond \bmatch_n \bigr) \bigr]
= \P\bigl( \ballevent_{\vect{T}}^r(V_n^\msb) \bigr), \eal \eeq
where $\P$ on the rhs denotes total probability, i.e., wrt the product measure of $\bmatch_n$ and $V_n^\msb$. We compute this probability analogously to the neighborhood probabilities in the limit in \eqref{eq:neighborhood_lim_expand}, taking advantage of the notion of ordered trees and ordered isomorphism introduced in \cref{ss:neighborhoods_trees}). There, we have also turned tree neighborhoods in the underlying $\BCM$ into ordered trees using edge labels, and due to \cref{rem:pairingalgo}, we can construct the $\BCM$ in the order prescribed by $\vect{T}$. Thus we can compute the probability in \eqref{eq:neighborhood_1stmoment} simply by checking the degree and mark of the vertex added in each step as we construct the $\BCM$.

Again, we only study the case $\markfunc_T(\otroot)=\msl$ in detail, as the case $\markfunc_T(\otroot)=\msr$ is analogous. Denote the set of vertices with a given mark and degree as
\beq\label{eq:def_Vk} \leftpar_k := \{v\in\leftpar: \ldeg(v)=k\}, 
\qquad \rightpar_k := \{a\in\rightpar: \rdeg(a)=k\}. \eeq
Further, recall (see \cref{ss:neighborhoods_trees}) that the Ulam-Harris labels give an ordering ($<$) of all vertices in $B_r(\vect{T}) := B_r(T,\markfunc_T,\otroot)$, that $\boundary_r(\vect{T})$ denotes generation $r$ in $\vect{T}$ and that the degree of $\vect{v}\in\vect{T}$ is denoted by $d(\vect{v})$. 
Recall that the mark function $\markfunc_n^\msb$ on the $\BCM$ from \eqref{eq:markfunc_BCM} is constant, thus in particular, it does \emph{not} depend on the matching. We can hence assume that the marks are already present while constructing the matching. 
As we construct a neighborhood in the $\BCM$, we want it to be isomorphic to $B_r(\vect{T})$, thus we cannot choose the same vertices and half-edges again, which reduces the number of available objects with the desired properties, i.e., mark and degree. This is called the depletion-of-points-and-half-edges effect, 
and we can quantify it in terms of $B_r(\vect{T})$, since we construct the neighborhood in the $\BCM$ to be isomorphic to this graph; the following notation serves this purpose. We denote the total number of vertices in $B_r(\vect{T})$ and the number of vertices preceding some vertex $\vect{v} \in B_r(\vect{T})$ in the ordering respectively by
\beq\label{eq:def_depletion_constants1} 
\otreesize := \abs{B_r(\vect{T})}, 
\qquad \otreebefore{\vect{v}} := \bigl\lvert \{ \vect{w}\in B_r(\vect{T}):\, \vect{w} < \vect{v} \} \bigr\rvert. \eeq
Since $\vect{T}$ is a tree, $\otreesize-1$ equals the total number of edges in $B_r(\vect{T})$, and $\otreebefore{\vect{v}}-1$ equals the number of edges created before choosing the vertex corresponding to $\vect{v}$. 
Further, let $\vect{w} \isom \vect{v}$ denote the event that $\vect{w}$ is ``similar'' to $\vect{v}$, in the sense of having the same mark and degree; formally,
\beq\label{eq:def_similar_vertices} \{ \vect{w} \isom \vect{v} \} 
:= \{ \markfunc_T(\vect{w}) = \markfunc_T(\vect{v}) \} \cap \{ d(\vect{w}) = d(\vect{v}) \}. \eeq
Denote the number of vertices similar to $\vect{v}$, in total in $B_r(\vect{T})$ and preceding $\vect{v}$, respectively, by 
\beq\label{eq:def_depletion_constants2} 
\otreetype{\vect{v}} := \bigl\lvert \{ \vect{w}\in B_r(\vect{T}):\, \vect{w} \isom \vect{v} \} \bigr\rvert, 
\qquad \otreetypebefore{\vect{v}} := \bigl\lvert \{ \vect{w}\in B_r(\vect{T}):\, \vect{w} < \vect{v},\, \vect{w} \isom \vect{v} \} \bigr\rvert. 
\eeq
Intuitively, $\otreetypebefore{\vect{v}}$ counts the number of vertices with the desired properties that are already used up in the construction of the neighborhood in the $\BCM$ when we pick the vertex corresponding to $\vect{v}\in B_r(\vect{T})$, while $\otreetype{\vect{v}}$ is the corresponding quantity after constructing the entire neighborhood isomorphic to $B_r(\vect{T})$.  
Recall \eqref{eq:def_halfedges}, \eqref{eq:def_pq_pmf}, \eqref{eq:mnratio} and \eqref{eq:neighborhood_event}, and denote the positive part of expression $f$ by $f_+:=\max\{f,0\}$. 
Let $\wih \ballevent_{\vect{T}}^{r-}(V_n^\msb)$ denote the event that the neighborhood of $V_n^\msb$ has the desired tree structure up to generation $r-1$, i.e., $\ballevent_{\vect{T}}^{r-1}(V_n^\msb)$ happens, and further, generation $r-1$ in the neighborhood of $V_n^\msb$ has the desired degree sequence $(d(\vect{v}))_{\vect{v}\in\boundary_{r-1}(\vect{T})}$. 
Keeping the intuitive meaning of the quantities defined in \eqref{eq:def_depletion_constants1} and \eqref{eq:def_depletion_constants2} in mind, as well as that creating one edge requires one $\msl$-half-edge and one $\msr$-half-edge, we calculate 
\begin{subequations}\label{eq:neighborhood_1stmom_expand} 
\begin{align} 
\P\bigl( \ballevent_{\vect{T}}^r(V_n^\msb) \bigr) 
&= \frac{\abs{\vertices^{\msl}_{d(\otroot)}}}{N_n+M_n}
\prod_{\substack{\vect{i}\in T\\ 0<\abs{\vect{i}}<r\\ \abs{\vect{i}}\text{ odd}}} 
\frac{ d(\vect{i}) \bigl(\abs{\rightpar_{d(\vect{i})}} - \otreetypebefore{\vect{i}} \bigr)_+ }{\he_n-\bigl(\otreebefore{\vect{i}}-1\bigr)} 
\prod_{\substack{\vect{j}\in T\\ 0<\abs{\vect{j}}<r\\ \abs{\vect{j}}\text{ even}}} 
\frac{ d(\vect{j}) \bigl(\abs{\leftpar_{d(\vect{j})}} - \otreetypebefore{\vect{j}} \bigr)_+ }{\he_n-\bigl(\otreebefore{\vect{j}}-1\bigr)} \\
\label{eq:neighborhood_1stmom_expand_condprob} &\phantom{{}={}} \times \P\bigl( \text{no cycle closed in generation $r$} \bcond \wih \ballevent_{\vect{T}}^{r-}(V_n^\msb) \bigr). \end{align}
\end{subequations}
Note that once again, the products only include degrees of vertices up to generation $r-1$, as the generation $r$, which is the \emph{last generation} in the $r$-ball, consists of leaves: while they may have further neighbors in the complete tree $\vect{T}$, those potential neighbors are not part of the $r$-ball. To ensure that the vertices in generation $r$ of the $\BCM$ are also \emph{leaves in the $r$-ball}, we have to make sure they do not create any cycle in generation $r$, and the probability \eqref{eq:neighborhood_1stmom_expand_condprob} accounts for this, conditionally on the past of the construction $\wih \ballevent_{\vect{T}}^{r-}(V_n^\msb)$. 
Due to the bipartite structure, odd cycles (edges between two vertices in generation $r$) are impossible, thus we only have to make sure the vertices chosen as ``leaves'' do not coincide. Denote the desired number of vertices (``leaves'') in generation $r$ by $L := \abs{\boundary_r(\vect{T})}$, and let $\dmax{\msb} := \max\{\dmax{\msl},\dmax{\msr}\}$. We can bound the complement probability of \eqref{eq:neighborhood_1stmom_expand_condprob} using the union bound as
\beq\label{eq:probcycle} \bal &\P\bigl( \text{there is a cycle closed in generation $r$} \bcond \wih \ballevent_{\vect{T}}^{r-}(V_n^\msb) \bigr) \\
&\leq \sum_{i=2}^{L} \P(\text{first cycle created by $i\ith$ ``leaf''})
\leq \sum_{i=2}^{L} \frac{(i-1)(\dmax{\msb}-1)}{\he_n - (\otreesize - 1) - (i - 1)},
\eal \eeq
where the denominator is the exact number of available half-edges (considering the bipartite structure of the graph), while the numerator is an upper bound on the available half-edges incident to vertices already chosen in generation $r$. For fixed $r$ and $\vect{T}$, $\otreesize$ and $L$ are fixed constants, and by \cref{rem:asmp_consequences} \cref{cond:dmax}, $\dmax{\msb} = o(\he_n)$, thus we have finitely many $o(1)$ terms. Consequently,
\beq\label{eq:neighborhood_1stmom_1} \P\bigl( \text{no cycle closed in generation $r$} \bcond \wih \ballevent_{\vect{T}}^{r-}(V_n^\msb) \bigr) 
= 1-o(1). \eeq
By \cref{asmp:convergence} \cref{cond:limit_ldeg} and \cref{rem:asmp_consequences} \cref{cond:partition_ratio}, the first factor
\beq\label{eq:neighborhood_1stmom_3} \frac{\abs{\leftpar_{d(\otroot)}}}{N_n+M_n} 
= \frac{N_n}{N_n+M_n} \frac{\abs{\leftpar_{d(\otroot)}}}{N_n} 
\to \frac{1}{1+\gamma} p_{d(\otroot)}. \eeq
We now look at factors in the product over $\vect{i}$. 
By (\ref{eq:def_depletion_constants1}-\ref{eq:def_depletion_constants2}), $0\leq \otreetypebefore{\vect{i}} \leq \otreetype{\vect{i}} \leq \otreesize$, and $\he_n\geq\he_n-(\otreebefore{\vect{i}}-1)>\he_n-\otreesize$ (since $\vect{i}\neq\otroot$). Then, for fixed $r$ and $\vect{T}$, by \cref{asmp:convergence} \cref{cond:limit_rdeg}, \cref{cond:mean_rdeg} and \cref{rem:asmp_consequences} \cref{cond:partition_ratio}, as $n\to\infty$,
\beq\label{eq:neighborhood_1stmom_2} \frac{ d(\vect{i}) \bigl(\abs{\rightpar_{d(\vect{i})}} - \otreetypebefore{\vect{i}} \bigr)_+ }{\he_n-\bigl(\otreebefore{\vect{i}}-1\bigr)} 
= \frac{d(\vect{i}) \bigl(M_n \cdot \nth q_{d(\vect{i})}-O(1)\bigr) }{M_n\E[D_n^\msr]-O(1)}
\to \frac{d(\vect{i}) \cdot q_{d(\vect{i})}}{\E[D^\msr]}. \eeq
It is analogous to show convergence for the factors in the product over $\vect{j}$, using \cref{asmp:convergence} (\ref{cond:limit_ldeg}-\ref{cond:mean_ldeg}) and \cref{rem:asmp_consequences} \cref{cond:partition_ratio}. 
From (\ref{eq:neighborhood_1stmom_1}-\ref{eq:neighborhood_1stmom_2}), we conclude that, since there are only finitely many factors, \eqref{eq:neighborhood_1stmom_expand} converges to \eqref{eq:neighborhood_lim_expand} as $n\to\infty$. That is, as required,
\beq\label{eq:neighborhood_1stmom_conv} \E\bigl[ \P\bigl( \ballevent_{\vect{T}}^r(V_n^\msb) \bcond \bmatch_n \bigr) \bigr]
\to \P\bigl( B_r(\BP_\mss,\markfunc^\mss,\otroot) \orderedisom B_r(\vect{T}) \bigr). \eeq

\subsubsection{Second moment}\label{sss:LWC_secondmom}
Let $V_n^\msb, U_n^\msb \sim \Unif(\vertices^\msb)$, with $\vertices^\msb = \leftpar \cup \rightpar$, denote two independent, uniformly chosen vertices of the $\BCM$. To show that the variance of the neighborhood counts converge to $0$, we compute the second moment
\beq\label{eq:neighborhood_2ndmom_1} \bal 
\E\Bigl[ \P\bigl( \ballevent_{\vect{T}}^r(V_n^\msb) \bcond \bmatch_n \bigr)^2 \Bigr] 
&= \E\Bigl[ \frac{1}{(N_n+M_n)^2} \sum_{v,w \in \vertices^\msb} \ind_{ \ballevent_{\vect{T}}^r(v) } \cdot \ind_{\ballevent_{\vect{T}}^r(w) } \Bigr] \\
&= \frac{1}{(N_n+M_n)^2} \sum_{v,w \in \vertices^\msb} \P\bigl( \ballevent_{\vect{T}}^r(v) \cap \ballevent_{\vect{T}}^r(w) \bigr) 
= \P\bigl( \ballevent_{\vect{T}}^r(V_n^\msb) \cap \ballevent_{\vect{T}}^r(U_n^\msb ) \bigr). \eal \eeq
We analyze this probability in parts, noting that 
$\ind_{\{ U_n^\msb = V_n^\msb \}} + \ind_{\{ 0 < \dist(U_n^\msb ,V_n^\msb) \leq 2r \}} + \ind_{\{ \dist(U_n^\msb ,V_n^\msb) > 2r \}}$ equals $1$ almost surely, where $\dist(u,v)$ denotes the (random) graph distance of $u,v\in\vertices^\msb$ in the $\BCM$. First, on the event $\{ U_n^\msb = V_n^\msb \}$, we have that 
\beq\label{eq:neighborhood_2ndmom_3} 
\P\bigl( \ballevent_{\vect{T}}^r(V_n^\msb) \cap \ballevent_{\vect{T}}^r(U_n^\msb ) \cap \{V_n^\msb = U_n^\msb \} \bigr) 
\leq \P\bigl( V_n^\msb = U_n^\msb \bigr) 
= \frac{1}{M_n+N_n} \to 0 
\eeq
as $n\to\infty$. 
Next, we consider the event $\{ 0 < d(U_n^\msb ,V_n^\msb) \leq 2r \}$, where $V_n^\msb$ and $U_n^\msb $ are distinct, but their $r$-neighborhoods intersect. Let $K$ denote the largest degree in $B_r(\vect{T})$. 
On the event $\ballevent_{\vect{T}}^r(V_n^\msb) \cap \ballevent_{\vect{T}}^r(U_n^\msb ) \cap \{ 0 < d(U_n^\msb ,V_n^\msb) \leq 2r \}$, there must exist a path between $V_n^\msb$ and $U_n^\msb $ that is fully contained in the union of their $r$-neighborhoods. That is, the path consists of $j\leq 2r$ vertices, each of which has degree at most $K$. Let us denote $v_0 := V_n^\msb, v_j := U_n^\msb $. By relaxing the conditions on the path and taking a union bound,
\beq\label{eq:neighborhood_2ndmom_6} \begin{split} &\P\bigl( \ballevent_{\vect{T}}^r(V_n^\msb) \cap \ballevent_{\vect{T}}^r(U_n^\msb ) \cap \{0<d(V_n^\msb,U_n^\msb )\leq 2r\} \bigr) \\
&\leq 
\P\bigl( \bdeg(v_0), \bdeg(v_1)\leq K \bigr) \cdot \P\bigl( (v_0,v_1)\in\edges(\BCM) \bcond \bdeg(v_0), \bdeg(v_1)\leq K \bigr) \\
&\phantom{{}\leq{}}+ \sum_{j=2}^{2r} \, \sum_{v_1,...,v_{j-1}\in\vertices^\msb} 
\P\bigl( \forall\, 0\leq i\leq j,\, \bdeg(v_i)\leq K \bigr) \\
&\phantom{{}\leq{}+{}}\times \P\bigl( \forall\, 0\leq i\leq j-1,\, (v_i,v_{i+1})\in\edges(\BCM_n) \bcond \forall\, 0\leq i\leq j,\, \bdeg(v_i)\leq K \bigr).
\end{split} \eeq
The first factor in the first term and the first factor in the sum are trivially bounded by $1$. Since by construction, half-edges are paired uniformly, and we consider vertices with at most $K$ half-edges attached, we can upper bound \eqref{eq:neighborhood_2ndmom_6} by
\beq\label{eq:neighborhood_2ndmom_8} \sum_{j=1}^{2r} (N_n+M_n)^{j-1} 
\frac{K \cdot \bigl(K(K-1)\bigr)^{j-1} \cdot K}{(2\he_n-2j)^j}. \eeq
Note that for $r$ and $\vect{T}$ fixed, $K$ is a fixed, finite constant, while $M_n$ and $\he_n$ grow linearly with $N_n$ by \cref{rem:asmp_consequences} \cref{cond:partition_ratio}. Thus the bound we obtained in \eqref{eq:neighborhood_2ndmom_8} is of order $O(N_n^{-1})$, which implies that
\beq\label{eq:neighborhood_2ndmom_9} 
\P\bigl( \ballevent_{\vect{T}}^r(V_n^\msb) \cap \ballevent_{\vect{T}}^r(U_n^\msb ) \cap \{0<d(V_n^\msb,U_n^\msb )\leq 2r\} \bigr)
= O(N_n^{-1}) \to 0 \eeq
as $n\to\infty$. 
Finally, we restrict ourselves to the event $\{ d(U_n^\msb ,V_n^\msb) > 2r \}$, when the $r$-balls of $V_n^\msb$ and $U_n^\msb $ are disjoint. Analogously to before, we calculate the probability of interest through constructing neighborhoods in the $\BCM$ in the fashion prescribed by the ordered family tree $\vect{T}$. In particular, we first construct the $r$-ball around $V_n^\msb$ avoiding $U_n^\msb $, then construct the $r$-ball around $U_n^\msb $ avoiding the $r$-ball of $V_n^\msb$, leading to slight changes compared to \eqref{eq:neighborhood_1stmom_expand}. 
Again, we carry out the calculation in the case $\markfunc_T(\otroot) = \msl$; the case $\markfunc_T(\otroot) = \msr$ can be studied analogously. 
Recall \eqref{eq:def_halfedges}, \eqref{eq:def_pq_pmf}, \eqref{eq:mnratio}, $f_+:=\max\{f,0\}$, (\ref{eq:def_depletion_constants1}-\ref{eq:def_depletion_constants2}), and the event $\wih \ballevent_{\vect{T}}^{r-}(V_n^\msb)$ defined above \eqref{eq:neighborhood_1stmom_expand}. 
Similarly to \eqref{eq:neighborhood_1stmom_expand}, we compute and explain below
\begin{subequations}\label{eq:neighborhood_2ndmom_expand} 
\begin{align}
\label{eq:neighborhood_2ndmom_expand_UV} &\P\bigl( \ballevent_{\vect{T}}^r(V_n^\msb) \cap \ballevent_{\vect{T}}^r(U_n^\msb ) \cap \{d(V_n^\msb,U_n^\msb ) > 2r\} \bigr) 
= \frac{\abs{\leftpar_{d(\otroot)}}}{N_n+M_n} \cdot \frac{\abs{\leftpar_{d(\otroot)}}-1}{N_n+M_n} \\
&\label{eq:neighborhood_2ndmom_expand_Vneighborhood} \phantom{{}={}}\times 
\prod_{\substack{\vect{i}\in T\\ 0<\abs{\vect{i}}<r\\ \abs{\vect{i}} \text{ odd}}} 
\frac{ d(\vect{i}) \bigl( \abs{\rightpar_{d(\vect{i})}} - \otreetypebefore{\vect{i}} - \ind_{\{\vect{i} \isom \otroot\}} \bigr)_+ }{\he_n-\bigl(\otreebefore{\vect{i}}-1\bigr)} 
\prod_{\substack{\vect{j}\in T\\ 0<\abs{\vect{j}}<r\\ \abs{\vect{j}} \text{ even}}}
\frac{ d(\vect{j}) \bigl(\abs{\leftpar_{d(\vect{j})}} - \otreetypebefore{\vect{j}} - \ind_{\{\vect{j} \isom \otroot\}} \bigr)_+ }{\he_n-\bigl(\otreebefore{\vect{j}}-1\bigr)} \\
\label{eq:neighborhood_2ndmom_expand_Uneighborhood} &\phantom{{}={}}\times \prod_{\substack{\vect{k}\in T\\ 0<\abs{\vect{k}}<r\\ \abs{\vect{k}} \text{ odd}}} 
\frac{ d(\vect{k}) \bigl(\abs{\rightpar_{d(\vect{k})}} - \otreetype{\vect{k}} - \otreetypebefore{\vect{k}} \bigr)_+ }{\he_n-(\otreesize-1)-\bigl(\otreebefore{\vect{k}}-1\bigr)} 
\prod_{\substack{\vect{l}\in T\\ 0<\abs{\vect{l}}<r\\ \abs{\vect{l}} \text{ even}}}
\frac{ d(\vect{l}) \bigl(\abs{\leftpar_{d(\vect{l})}} - \otreetype{\vect{l}} - \otreetypebefore{\vect{l}} \bigr)_+ }{\he_n-(\otreesize-1)-\bigl(\otreebefore{\vect{l}}-1\bigr)} \\
\label{eq:neighborhood_2ndmom_expand_Vnocycle} &\phantom{{}={}}\times \P\bigl(\text{no cycle in $B_r(V_n^\msb)$ \& not connecting to $U_n^\msb $} \bcond \wih \ballevent_{\vect{T}}^{r-}(V_n^\msb) \bigr) \\
\label{eq:neighborhood_2ndmom_expand_Unocycle} &\phantom{{}={}}\times \P\bigl(\text{no cycle in $B_r(U_n^\msb)$ \& not connecting to $B_r(V_n^\msb)$} \bcond \ballevent_{\vect{T}}^{r}(V_n^\msb), \wih \ballevent_{\vect{T}}^{r-}(U_n^\msb) \bigr).
\end{align}
\end{subequations}
The rhs of \eqref{eq:neighborhood_2ndmom_expand_UV} corresponds to the choice of $V_n^\msb$ and $U_n^\msb$, and both factors separately converge to $\tfrac{1}{1+\gamma} p_{d(\otroot)}$, by \cref{asmp:convergence} \cref{cond:limit_ldeg} and \cref{rem:asmp_consequences} \cref{cond:partition_ratio}. 
The line \eqref{eq:neighborhood_2ndmom_expand_Vneighborhood} arises from the construction of the neighborhood of $V_n^\msb$, and only differs from the products in \eqref{eq:neighborhood_1stmom_expand} in the terms $\ind_{\{ \vect{i}\isom\otroot \}}$ and $\ind_{\{ \vect{j}\isom\otroot \}}$, which ensure avoiding $U_n^\msb $ in the neighborhood of $V_n^\msb$. By the same argument as in \cref{sss:LWC_firstmom}, each factor converges to the corresponding factor in \eqref{eq:neighborhood_lim_expand}. 
The line \eqref{eq:neighborhood_2ndmom_expand_Uneighborhood} arises from the construction of the neighborhood of $U_n^\msb $, where we have to avoid the neighborhood of $V_n^\msb$, thus we further exclude $(\otreesize-1)$ paired $\msl$-and $\msr$-half-edges and $\otreetype{\vect{v}}$ vertices ``similar'' to $\vect{v}$ when choosing the vertex corresponding to $\vect{v}$. As these are finite corrections, it still holds that each factor converges to the corresponding factor in \eqref{eq:neighborhood_lim_expand}, which we demonstrate for factors from the product over $\vect{k}$. 
Note that for $\vect{T}$ and $r$ fixed, $\otreesize$ is a finite constant, and it serves as an upper bound for $\otreetype{\vect{k}}$, $\otreebefore{\vect{k}}$ and $\otreetypebefore{\vect{k}}$, see \eqref{eq:def_depletion_constants1} and \eqref{eq:def_depletion_constants2}. 
Then, by \cref{asmp:convergence} \cref{cond:limit_rdeg}, \cref{cond:mean_rdeg} and \cref{rem:asmp_consequences} \cref{cond:partition_ratio}, as $n\to\infty$, 
\beq\label{eq:neighborhood_2ndmom_5} \frac{ d(\vect{k}) \bigl(\abs{\rightpar_{d(\vect{k})}} - \otreetype{\vect{k}} - \otreetypebefore{\vect{k}} \bigr)_+ }{\he_n-(\otreesize-1)-\bigl(\otreebefore{\vect{k}}-1\bigr)}
= \frac{ d(\vect{k}) \bigl(M_n \cdot \nn q_{d(\vect{k})} - O(1) \bigr)_+ }{ M_n \E[D_n^\msr] - O(1)} \to \frac{d(\vect{k}) \cdot q_{d(\vect{k})}}{\E[D^\msr]}. \eeq
Analogous results hold for the factors in the product over $\vect{l}$, using \cref{asmp:convergence} (\ref{cond:limit_ldeg}-\ref{cond:mean_ldeg}) and \cref{rem:asmp_consequences} \cref{cond:partition_ratio}. 
Note that each factor in the products in \eqref{eq:neighborhood_lim_expand} now appears as the limit of \emph{two} factors in \eqref{eq:neighborhood_2ndmom_expand}, one in \eqref{eq:neighborhood_2ndmom_expand_Vneighborhood} and one in \eqref{eq:neighborhood_2ndmom_expand_Uneighborhood}. 
By calculations analogous to \eqref{eq:probcycle}, the conditional probabilities in (\ref{eq:neighborhood_2ndmom_expand_Vnocycle}-\ref{eq:neighborhood_2ndmom_expand_Unocycle}) are again $1-o(1)$. 
Combining \eqref{eq:neighborhood_2ndmom_expand} and the arguments below it with \eqref{eq:neighborhood_lim_expand} yields that, as $n\to\infty$, 
\beq\label{eq:neighborhood_2ndmom_mainterm} \bal 
\P\bigl( \ballevent_{\vect{T}}^r(V_n^\msb) \cap \ballevent_{\vect{T}}^r(U_n^\msb ) \cap \{d(V_n^\msb,U_n^\msb )> 2r\} \bigr) 
\to \P\bigl( B_r(\BP_\mss,\markfunc^\mss,\otroot) \orderedisom B_r(\vect{T}) \bigr)^2. \eal \eeq
Combining \eqref{eq:neighborhood_2ndmom_3}, \eqref{eq:neighborhood_2ndmom_9} and \eqref{eq:neighborhood_2ndmom_mainterm}, we conclude that as $n\to\infty$,
\beq\label{eq:neighborhood_2ndmom_total} \E\bigl[ \P( \ballevent_{\vect{T}}^r(V_n^\msb) \cond \bmatch_n )^2 \bigr]  
\to \P\bigl( B_r(\BP_\mss,\markfunc^\mss,\otroot) \orderedisom B_r(\vect{T}) \bigr)^2. \eeq
Thus, by \eqref{eq:neighborhood_1stmom_conv}, it follows that, as $n\to\infty$,
\beq\label{eq:neighborhood_2ndmom_10} \Var \bigl( \P( \ballevent_r(V_n^{\msl+\msr}) \cond \bmatch_n ) \bigr) \to 0. \eeq
By \eqref{eq:neighborhood_1stmom_conv} and \eqref{eq:neighborhood_2ndmom_10}, Chebyshev's inequality yields that \eqref{eq:cond:neighborhoods_marked_ordered} holds for arbitrary $r\in\N$ and $\vect{T}=(T,\markfunc_T,\otroot)\in\vect{\supp}(\BP_\mss,\markfunc^\mss,\otroot)$. 
Since at the beginning of \cref{ss:neighborhoods_trees}, we have reduced \cref{prop:LWC_BCM_marked} to this statement, this concludes the proof of \cref{prop:LWC_BCM_marked}.
\end{proof}
The calculation of the second moment, in particular  the result \eqref{eq:neighborhood_2ndmom_total}, asserts that neighborhoods of two independently and uniformly chosen vertices are \emph{asymptotically independent}.

\section{Proof of results on the random intersection graph with communities}
\label{s:proof_local}
In this section, we provide the proofs of our results on the local properties of the $\RIGC$ model. We introduce the local weak limit of the $\RIGC$ in \cref{ss:construction_LWClim_RIGC} and formally prove \cref{thm:LWC_RIGC} on the local weak convergence in \cref{ss:RIGC_LWC_proof}. Finally, we prove the consequences of local weak convergence for the degrees and local clustering coefficient as well as the overlapping structure in \cref{ss:deg_clust,ss:overlaps}, respectively.

\subsection{The local weak limit of the RIGC}
\label{ss:construction_LWClim_RIGC}
In this section, we construct the random rooted graph $(\rCP,\groot)$, that is the local weak limit in probability of the $\RIGC$. The notation is inspired by the fact that $(\rCP,\groot)$ is the ``community projection'' (see \cref{ss:RIGC_def}) of a random rooted marked tree $(\BP_\msl,\markfunc^\msp,\otroot)$ defined below, in the same way that the $\RIGC$ is the ``community projection'' of the underlying $\BCM$. It is then not surprising that $(\BP_\msl,\markfunc^\msp,\otroot)$ is the local weak limit of the underlying $\BCM$, including the community graphs, and that it is obtained from the $\BP$-tree $(\BP_\msl,\otroot)$, introduced in \cref{ss:construction_LWClim_BCM}, by equipping it with a \emph{new} mark function $\markfunc^\msp$ defined below. In the following, we give a formal definition of these objects, starting from the marked graph representation of the underlying $\BCM$.

\smallskip
\paragraph*{\textbf{The pre-image: the community-marked BCM}}
We introduce the new mark function $\markfunc^\msc$ on $\BCM$ to encode not only the partition of each vertex, but also the community graphs and the assignment of community roles. Recall the set of possible community graphs $\comgraphs$ and the ``no mark'' symbol $\nomark$. Let the set of marks be $\markset^\msp := \comgraphs\cup\Z^+\cup\{\nomark,\msl\}$. We mark each $v\in\leftpar$ by $\msl =: \markfunc^\msc(v)$ and each $a\in\rightpar$ by its community graph $\com_a =: \markfunc^\msc(a)$. Recall that an edge of the underlying $\BCM$ formed by $\msl$-half-edge $(v,i)$ and $\msr$-half-edge $(a,l)$ is labeled by $(i,l)$; we also mark this edge by the tuple $(i,l)$. Now $(\BCM,\markfunc^\msc)$, the community-marked $\BCM$, encodes all information necessary for constructing the $\RIGC$. The community graphs are given as the marks of $\msr$-vertices, and edge-marks encode the assigned community roles: if $\msl$-vertex $v$ is connected to $\msr$-vertex $a$ by an edge marked $(i,l)$, we know that $v$ takes on the community role of the vertex with label $l$ in $\com_a$. Thus 
the community projection operator \cref{ss:RIGC_def} can be naturally redefined as $\wih{\SP}: (\BCM,\markfunc^\msc) \mapsto \RIGC$. For $v\in\leftpar$, we write $\wih{\SP}: (\BCM,\markfunc^\msc,v) \mapsto (\RIGC,v)$ for the rooted version of the projection.

\smallskip
\paragraph*{\textbf{Constructing the local weak limit of the RIGC}}
Since we define $(\rCP,\groot)$ as the community-projection $\wih \SP$ of $(\BP_\msl,\markfunc^\msp,\otroot)$, we now introduce this marked $\BP$-family tree. 
Recall $(\BP_\msl,\otroot)$ from \cref{ss:construction_LWClim_BCM}; conditionally on this (possibly infinite) ordered tree, we now define the random mark function $\markfunc^\msp$, using the set of marks $\markset^\msp$ from above. 
We mark vertices in even generations by $\msl$, and vertices in odd generations by some $H\in\comgraphs$, determined as follows. Recall the family of conditional measures $(\bm\mu_{\cdot \countin k})_{k\in\Z^+}$ from \eqref{eq:def_mu_cond}, and that we denote the degree of $\vect{a}\in\BP_\msl$ by $d(\vect{a})$. Independently of everything else, we mark $\vect{a}$ according to the measure $\bm\mu_{\cdot \countin d(\vect{a})}$. 
We mark each edge $e$ by a tuple $(i,l)\in(\Z^+)^2$, and we determine $i$ and $l$ separately. Denote the endpoint of $e$ in an even generation by $\vect{v}$ and the endpoint in an odd generation by $\vect{a}$, that intuitively correspond to an $\msl$- and $\msr$-vertex, respectively. 
We think of $i$ and $l$ as the marks of the $\msl$- and $\msr$-half-edges incident to $\vect{v}$ and $\vect{a}$, respectively. We mark families of half-edges incident to the same vertex $\vect{u}$ jointly, so that each mark in $[d(\vect{u})]$ is used once, but independently of all other families. (In particular, the two coordinates in each edge mark are independent.) For $\vect{u} \neq \otroot$, we first mark the half-edge that is part of the edge connecting $\vect{u}$ to its parent, by a uniform mark $K\distr\Unif[d(\vect{u})]$. Recall that the family tree is ordered, thus we also have an ordering of half-edges incident to $\vect{u}$ that are part of edges connecting $\vect{u}$ to its children. We mark these half-edges by $[d(\vect{u})] \setminus \{K\}$ in increasing order. For the root, we mark all its half-edges by $[d(\otroot)]$ in increasing order, analogously. 

This defines the law of $\markfunc^\msp$ conditional on $(\BP_\msl,\otroot)$, and consequently the joint law $(\BP_\msl,\markfunc^\msp,\otroot)$, as well as the law of the $\wih\SP$-projection $(\rCP,\groot)$. 
It follows from the construction that $(\rCP,\groot)$ is a simple, locally finite rooted graph with countable (possibly infinite) vertex set $\vertices(\rCP) = \{\vect{v}\in\BP_\msl, \abs{\vect{v}} \text{ even}\}$. 
We obtain the following insight on the overlapping structure of the communities: each vertex $\vect{v}\in\vertices(\rCP)$ is part of exactly $d(\vect{v})$ communities, however, by the tree structure of $\BP_\msl$, any two of these communities only share $\vect{v}$ as a common vertex, i.e., the proposed local weak limit $\rCP$ has the \emph{single-overlap} property.

\subsection{The local weak convergence of the RIGC}
\label{ss:RIGC_LWC_proof}
Since $(\RIGC,V_n^\msl)$ is defined as the $\wih\SP$-projection of $(\BCM_n,\markfunc^\msc,V_n^\msl)$, and $(\rCP,\groot)$ is defined as the $\wih\SP$-projection of $(\BP_\msl,\markfunc^\msp,\otroot)$, it is a natural idea to prove LWC of the $\RIGC$ through the LWC of the underlying $\BCM$. However, as argued before, balls in the $\BCM$ generally do not map to balls in the $\RIGC$, thus we have introduced generalized neighborhoods in \cref{ss:neighborhoods_trees}. To obtain ball neighborhoods in the $\RIGC$, we show convergence of frequencies of \emph{generalized neighborhoods} in the underlying $\BCM$. 
To formalize such results, we generalize the notation $\ballevent_{\vect{T}}^r(v) = \{ B_r(\BCM_n,\markfunc^\msb,v) \orderedisom B_r(T,\markfunc_T,\otroot) \}$ from \eqref{eq:neighborhood_event}. 

Recall from \cref{ss:neighborhoods_trees} that ordered trees are defined by having an ordering of the children of any vertex. This can in fact be generalized for non-tree rooted graphs, if we allow vertices that close cycles to also have a second (third, etc) parent; they still obtain their Ulam-Harris label via the first parent. We call such graphs ordered graphs. Further recall, again from \cref{ss:neighborhoods_trees}, the ordering of the underlying $\BCM$ defined by the edge labels. 
Let ${\vect{H}} := (H,\markfunc_H,\otroot)$ be a \emph{finite} ordered marked graph with marks from $\markset^\msp$, and consider the correspondence between vertices of $\vect{H}$ and $(\BCM,\markfunc^\msc,v)$ with the same Ulam-Harris labels; if this correspondence is a rooted marked isomorphism, 
we say that $v$ has an $\vect{H}$-neighborhood and denote this event by $\ballevent_{\vect{H}}(\BCM_n,\markfunc^\msc,v)$. 
We define the corresponding event $\ballevent_{\vect{H}}(\BP_\msl,\markfunc^\msp,\otroot)$ for $(\BP_\msl,\markfunc^\msp,\otroot)$ analogously. 
We can now state the convergence of neighborhood frequencies in the $\BCM$ for generalized neighborhoods, as follows, with $V_n^\msl\distr\Unif[\leftpar]$ and $\P(\,\cdot \cond \bmatch_n)$ denoting conditional probability wrt $\bmatch_n$. 

\begin{lemma}[Convergence of general neighborhoods]
\label{lem:LWC_BCM_generalized}
Consider $(\BCM_n,\markfunc^\msc)$ under Assumption \ref{asmp:convergence}, and let $\vect{H} := (G,\markfunc_H,\otroot)$ denote an ordered graph marked from $\markset^\msp$. Then, as $n\to\infty$,
\beq\label{eq:BCM_raggedneighborhoods} \P\bigl( \ballevent_{\vect{H}}(\BCM_n,\markfunc^\msc,V_n^\msl) \bcond \bmatch_n \bigr) 
\toinp \P\bigl( \ballevent_{\vect{H}}(\BP_\msl,\markfunc^\msp,\otroot) \bigr). \eeq
\end{lemma}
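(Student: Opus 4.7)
The plan is to adapt the first- and second-moment argument from the proof of \cref{prop:LWC_BCM_marked} to accommodate (a) the additional $\msc$-marks, namely community graphs on $\msr$-vertices and labels on edges, and (b) general connected edge-subgraphs $\vect{H}$ rather than just $r$-balls.

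Since $(\BP_\msl,\markfunc^\msp,\otroot)$ is almost surely a tree, if $\vect{H}$ contains a cycle then the limit probability in \eqref{eq:BCM_raggedneighborhoods} vanishes; a union bound on pairs of already-paired half-edges analogous to \eqref{eq:probcycle} shows that the BCM probability of a cycle inside any fixed finite generalised neighbourhood is $O(1/\he_n)$, so \eqref{eq:BCM_raggedneighborhoods} is trivial in that case. I therefore reduce to the case that $\vect{H}=(H,\markfunc_H,\otroot)$ is an ordered marked tree.

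For the first moment, $\E\bigl[\P\bigl(\ballevent_{\vect{H}}(\BCM_n,\markfunc^\msc,V_n^\msl)\bcond\bmatch_n\bigr)\bigr]=\P\bigl(\ballevent_{\vect{H}}(\BCM_n,\markfunc^\msc,V_n^\msl)\bigr)$ is computed by sequentially revealing the half-edges of $\vect{H}$ in Ulam--Harris order, using the pairing algorithm of \cref{rem:pairingalgo}. Writing $i_\otroot^{\max}$ for the largest $\msl$-half-edge label of the root appearing in $\vect{H}$, the root contributes $\P(D_n^\msl \geq i_\otroot^{\max}) \to \P(D^\msl \geq i_\otroot^{\max})$. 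Each subsequently revealed $\msr$-vertex $\vect{a}$ with prescribed community mark $H_{\vect{a}}\in\comgraphs_k$ contributes the factor
\[ \frac{d(\vect{a})\bigl(\abs{\{a\in\rightpar:\com_a=H_{\vect{a}}\}}-O(1)\bigr)_+}{\he_n-O(1)}\cdot\frac{1}{d(\vect{a})}=\frac{M_n\nth\mu_{H_{\vect{a}}}-O(1)}{\he_n-O(1)}, \]
where the leading $d(\vect{a})=k$ counts acceptable $\msr$-half-edges and the $1/k$ selects the prescribed $\msr$-half-edge label. By \cref{asmp:convergence} this tends to $\mu_{H_{\vect{a}}}/\E[D^\msr]$, which equals $\P(\wit D^\msr=k-1)\cdot\mu_{H_{\vect{a}}\vert k}/k$, precisely matching the joint probability of offspring $k-1$, community mark $H_{\vect{a}}$, and uniform parent half-edge label $K\distr\Unif[k]$ in the construction of $(\BP_\msl,\markfunc^\msp,\otroot)$ from \cref{ss:construction_LWClim_RIGC}. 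An analogous calculation handles each newly revealed $\msl$-vertex of prescribed degree $d$, yielding limiting factor $p_d/\E[D^\msl]$. Since there are only finitely many factors and depletion contributes only $O(1)$ corrections, the product converges to $\P\bigl(\ballevent_{\vect{H}}(\BP_\msl,\markfunc^\msp,\otroot)\bigr)$.

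The second moment mirrors \cref{sss:LWC_secondmom} with two independent $V_n^\msl,U_n^\msl\sim\Unif[\leftpar]$, splitting on the graph distance between them. The coincidence term and the short-distance term $\{0<\dist(V_n^\msl,U_n^\msl)\leq 2\abs{\vect{H}}\}$ are $O(1/N_n)$ by the arguments leading to \eqref{eq:neighborhood_2ndmom_3} and \eqref{eq:neighborhood_2ndmom_9}; on the disjoint event, the exploration around $U_n^\msl$ proceeds in a BCM from which only $O(1)$ half-edges and marked vertices have been removed, so the factorised probability converges to $\P\bigl(\ballevent_{\vect{H}}(\BP_\msl,\markfunc^\msp,\otroot)\bigr)^2$. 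Chebyshev's inequality then yields \eqref{eq:BCM_raggedneighborhoods}. The main bookkeeping obstacle is aligning the new community-graph and half-edge-label factors with the size-biased degree factors in the limit; the cancellation $d(\vect{a})\cdot 1/d(\vect{a})=1$ exhibited above is the key identity that makes the calculation go through cleanly.
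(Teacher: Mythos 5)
Your proposal is correct and follows essentially the same route as the paper, which disposes of this lemma in a single line by declaring it ``analogous to \cref{prop:LWC_BCM_marked}'' via a first and second moment method --- precisely the strategy you carry out (reduction to trees, sequential construction with $O(1)$ depletion corrections, splitting the second moment by the distance between the two uniform roots, Chebyshev). The only genuinely new bookkeeping relative to \cref{prop:LWC_BCM_marked} is the community marks and the half-edge labels, and your cancellation $d(\vect{a})\cdot 1/d(\vect{a})=1$ together with the identity $\mu_{H}/\E[D^\msr]=\P(\wit D^\msr=k-1)\,\mu_{H\vert k}/k$ is exactly what aligns the finite-$n$ factors with the limit object constructed in \cref{ss:construction_LWClim_RIGC}.
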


\begin{proof}
The proof of \cref{lem:LWC_BCM_generalized} is analogous to \cref{prop:LWC_BCM_marked} and follows a first and second moment method.
\end{proof}

\Cref{lem:LWC_BCM_generalized} also implies, with the generalized meaning of the notion $\toLWCP$ from \cref{rem:LWC_nonunif},
\beq\label{eq:BCM_commarked_neighborhoods} 
(\BCM_n,\markfunc^\msc,V_n^\msl) \toLWCP (\BP_\msl,\markfunc^\msp,\otroot).
\eeq
The statement follows by applying \cref{lem:LWC_BCM_generalized} to the special case of ball neighborhoods, and completing the argument with a similar reasoning as in the proof of \cref{prop:LWC_BCM_marked} (see \cref{ss:LWC_BCM}). The convergence of frequencies of ordered trees implies convergence of frequencies of unordered trees. Since the support of the limiting measure only contains trees, by completeness of measure the convergence must hold for any neighborhood.

We are now ready to prove \cref{thm:LWC_RIGC} that asserts the local weak convergence in probability of the random intersection graph with communities: 

\begin{proof}[Proof of \cref{thm:LWC_RIGC}]
By \eqref{eq:def_LWCP}, we have to prove that for any $r\in\N$ and $(H,\groot)\in\graphs_\groot$,
\beq \P\bigl( B_r(\RIGC_n,V_n^\msl)\isom B_r(H,\groot) \bcond \bmatch_n \bigr) 
\toinp \P\bigl( B_r(\rCP,\groot)\isom B_r(H,\groot) \bigr). \eeq
As discussed above, we rely on the LWC of the $\BCM$, more precisely, \cref{lem:LWC_BCM_generalized}, to prove the above statement. To make a connection between neighborhoods in the $\RIGC$ and the underlying $\BCM$, we define the \emph{pre-images} of $B_r(H,\groot)$: all possible \emph{ordered} graphs $\vect{U_i} := (U_i,\markfunc_i,\otroot)$, $i\in \CI$, for some index set $\CI$, that are mapped into $B_r(H,\groot)$ by the community projection $\wih\SP$. 
Since pre-images contain \emph{all} information necessary to determine the $r$-neighborhood in the projection, we can decompose the following events as \emph{disjoint} unions:
\begin{gather} \bigl\{ B_r(\RIGC_n,V_n^\msl)\isom B_r(H,\groot) \bigr\} 
= \disjointunion_{i\in\CI} \,\ballevent_{\vect{U_i}}(\BCM_n,\markfunc^\msc,V_n^\msl), \\
\bigl\{ B_r(\rCP,\groot)\isom B_r(H,\groot) \bigr\} 
= \disjointunion_{i\in\CI} \,\ballevent_{\vect{U_i}}(\BP_\msl,\markfunc^\msp,\otroot).
\end{gather}

In the following, we present an intuitive partitioning of these unions, for which we need to understand the pre-images better. Note that when we only observe the $\RIGC$ (or $\rCP$) graph, the communities are not known, thus we consider each possibility for the communities to reconstruct every pre-image. By the properties of the projection, each edge belongs to a unique community, thus the communities intersecting $B_r(H,\groot)$ determine a \emph{partition}\footnote{A partition of a set is a family of subsets such that any two subsets are disjoint, and their union is the complete set. We refer to the subsets in the family as partition blocks.}
of all edges in this neighborhood (see \cref{fig:preimage} for an illustration). 
\begin{figure}[hbt]
	\centering
	\begin{subfigure}[t]{0.45\textwidth}
		\centering
		\includegraphics[width=0.9\textwidth]{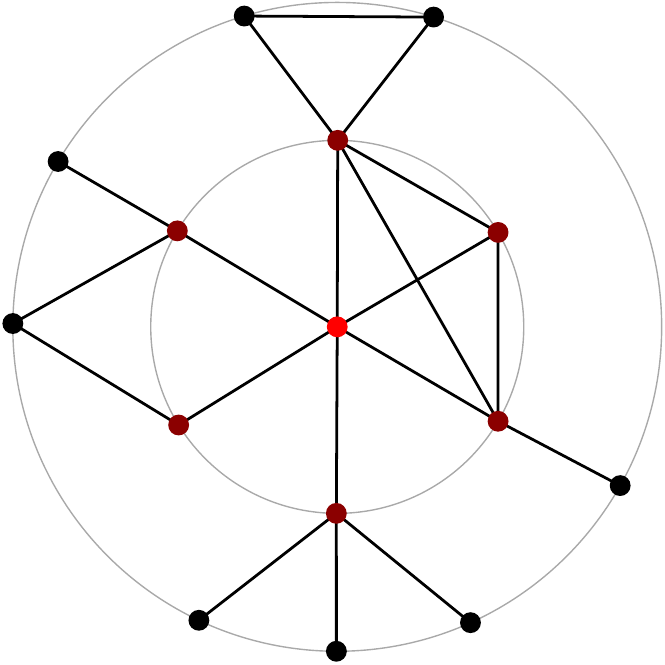}
		\caption{\emph{A $2$-neighborhood in the $\RIGC$}\\The circles and colors represent distance from the root (central red vertex).}
	\end{subfigure}\hspace{5px}
	\begin{subfigure}[t]{0.52\textwidth}
		\centering
		\includegraphics[width=0.8\textwidth]{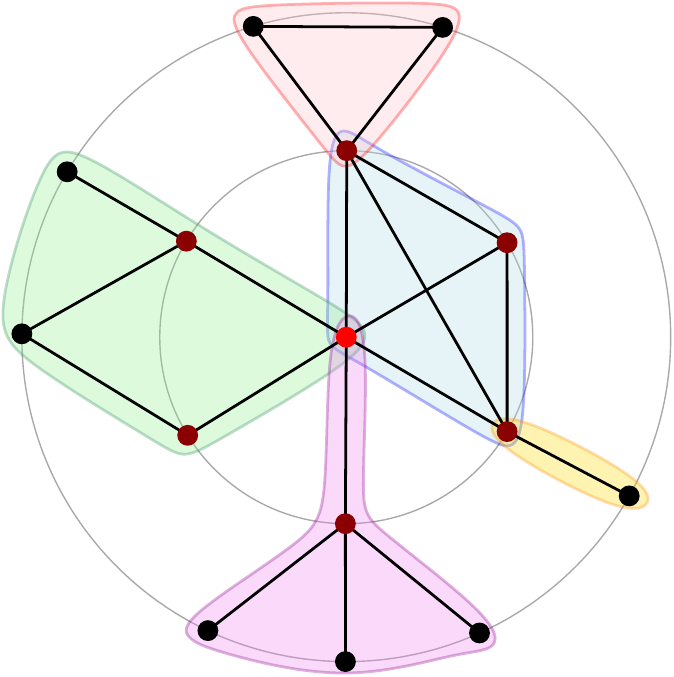}
		\caption{\emph{A possible edge-partition}\\ Partition blocks containing vertices on the boundary (black vertices) may correspond to ``unfinished'' communities that extend beyond this neighborhood.}
	\end{subfigure}\\
	\begin{subfigure}[b]{0.8\textwidth}
		\centering
		\includegraphics[width=0.8\textwidth]{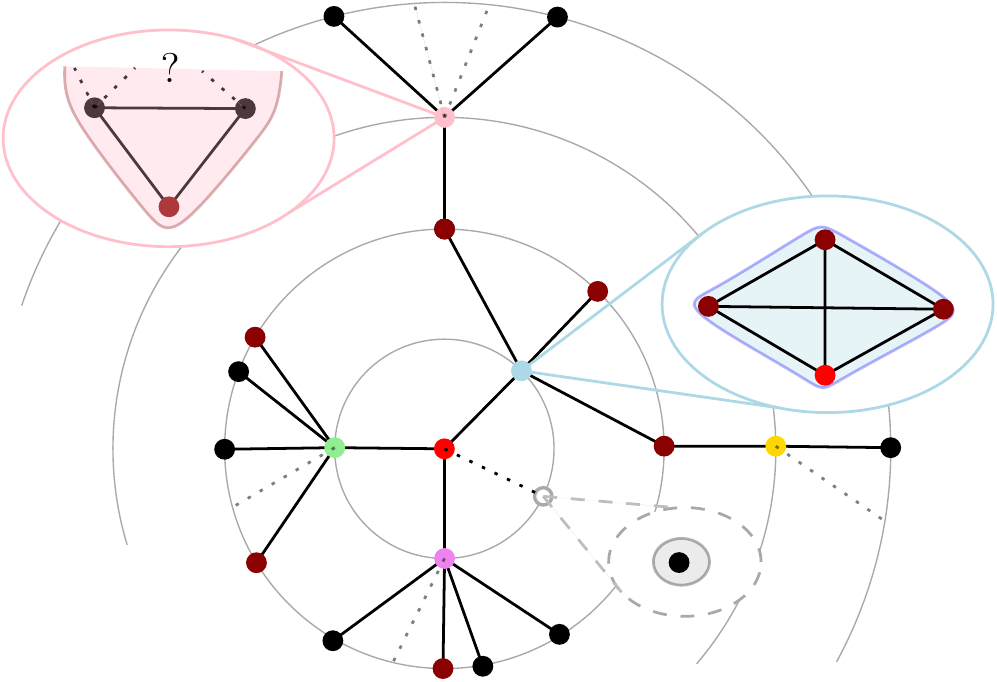}
		\caption{\emph{The ``pre-image'' corresponding to the above partition}\\ We preserved the color of each individual, and more or less its angular direction from the root. We can observe the bipartite structure and the change in graph distances. The pre-image is not unique: the community graph of ``unfinished'' communities (e.g.\ on the top left) is only partially known; individuals may be part of ``invisible'' $1$-member communities (e.g.\ on the bottom right).}
	\end{subfigure}
	\caption{A neighborhood and a possible pre-image}
	\label{fig:preimage}
\end{figure}

Let $\partitions$ denote the set of all edge-partitions of $B_r(H,\groot)$, which is \emph{finite}, since the total number of edges in $B_r(H,\groot)$ is finite. Note that for each pre-image $\vect{U_i}$, there is a unique corresponding edge-partition $\partt$; denote the (possibly empty) index set of pre-images corresponding to the edge-partition $\partt\in\partitions$ by $\CI(\partt)\subseteq\CI$. 
Thus
\begin{gather} 
\P\bigl( B_r(\RIGC_n,V_n^\msl)\isom B_r(H,\groot) \bcond \bmatch_n \bigr) 
= \sum_{\partt\in\partitions} \P\bigl( \disjointunion_{i\in\CI(\partt)} \ballevent_{\vect{U_i}}(\BCM_n,\markfunc^\msc,V_n^\msl) \bcond \bmatch_n \bigr), \\
\P\bigl( B_r(\rCP,\groot)\isom B_r(H,\groot) \bigr) 
= \sum_{\partt\in\partitions} \P\bigl( \disjointunion_{i\in\CI(\partt)} \ballevent_{\vect{U_i}}(\BP_\msl,\markfunc^\msp,\otroot) \bigr).
\end{gather}
Then, by $\abs{\partitions}<\infty$, it is sufficient to prove that for any fixed $\partt\in\partitions$,
\beq\label{eq:preimages_onepartition_conv} \P_{V_n^\msl}\bigl( \disjointunion_{i\in\CI(\partt)} \SB_{\vect{U}_i}(\BCM_n,\markfunc^\msp,V_n^\msl) \bcond \bmatch_n \bigr) 
\toinp \P\bigl( \disjointunion_{i\in\CI(\partt)} \,\SB_{\vect{U}_i}(\BP_\msl,\markfunc^\msp,\otroot) \bigr). \eeq
Clearly, this convergence is trivial if $\CI(\partt)$ is empty, thus in the following we consider edge-partitions $\partt$ such that $\CI(\partt)$ is not empty. We argue why the set $(\vect{U_i})_{i\in\CI(\partt)}$ contains several, in fact possibly infinitely many elements. To construct any pre-image (see \cref{fig:preimage}), more information is necessary, that is captured \emph{neither} in the neighborhood $B_r(H,\groot)$ \emph{nor} in the partition $\partt$. First, one-member communities do not produce edges, and thus remain ``invisible'' in the community-projection. Second, each partition block $E\in\partt$ containing an edge adjacent to a vertex in generation $r$ may correspond to an ``unfinished community'' that intersects the ball $B_r(H,\groot)$, but  is not fully contained in it.

We rely on a truncation argument so that we can focus on a finite subset of $\CI(\partt)$. We first show that each pre-image $\vect{U_i}$ is contained in a ball of radius $2r+1$. Note that distances in the pre-images are the largest possible when each edge forms a partition block by itself, then an $\msr$-vertex representing an edge between generation $r$ individuals can reach the maximum distance $2r + 1$. Now we can use that degrees in a finite ball are \emph{tight}, 
by the local weak convergence of the community-marked $\BCM$ \eqref{eq:BCM_commarked_neighborhoods}.\footnote{Since $\ind_{\{ \text{maximal degree in $r$-ball of $\BCM$} > K \}}$ is a bounded and continuous functional on $(\graphs_\groot,\dloc)$ (see \cref{ss:LWC_def}), by \eqref{eq:def_LWCphi}, $\P\bigl( \text{maximal degree in $r$-ball of $\BCM$} > K \bcond \bmatch_n \bigr) \toinp \P\bigl( \text{maximal degree in $r$-ball of $\BP_\msl$} > K \bigr)$, which vanishes as $K\to\infty$, since the degrees in $\BP_\msl$ that follow distributions $D^\msl$, $\wit D^\msr$ and $\wit D^\msl$ are tight.} 
Thus, for any $\eps>0$, there exists $K=K(\eps)<\infty$ such that
\begin{subequations}\label{eq:preimage_truncation}
\begin{gather} \P\bigl( \max \{ \bdeg(v): v\in B_{2r+1}(\BP_\msl,\markfunc^\msp,\otroot) \} > K \bigr) < \eps/6, \\
\P\bigl( \max \{ \bdeg(v): v\in B_{2r+1}(\BCM_n,\markfunc^\msc,V_n^\msl) \} > K \bcond \bmatch_n \bigr) < \eps/3 \text{ whp as } n\to\infty.
\end{gather}
\end{subequations}
Define the index set $\CI\bigl(\partt,{\leq}K\bigr) = \bigl\{i\in\CI(\partt): \max\{\bdeg(v): v\in\vertices(\vect{U_i})\} \leq K \bigr\}$. As each $\vect{U_i}$ for $i\in\CI(\partt,{\leq}K)$ has depth (maximal degree from the root) bounded by $2r+1$ and degree bounded by $K$, necessarily $\CI\bigl(\partt,{\leq}K\bigr)$ is finite. Denote $\CI(\partt,{>}K) := \CI(\partt)\setminus\CI(\partt,{\leq}K)$. 
By the triangle inequality,
\beq\label{eq:RIGC_LWC_1} \bal &\Bigl\lvert \P\bigl( {\disjointunion_{i\in\CI(\partt)}} \ballevent_{\vect{U_i}}(\BCM_n,\markfunc^\msp,V_n^\msl) \bcond \bmatch_n \bigr) 
- \P\bigl( {\disjointunion_{i\in\CI(\partt)}} \,\ballevent_{\vect{U_i}}(\BP_\msl,\markfunc^\msp,\otroot) \bigr) \Bigr\rvert \\
&\leq \P\bigl( {\disjointunion_{i\in\CI(\partt,{>}K)}} \ballevent_{\vect{U_i}}(\BCM_n,\markfunc^\msp,V_n^\msl) \bcond \bmatch_n \bigr) 
+ \P\bigl( {\disjointunion_{i\in\CI(\partt,{>}K)}} \,\ballevent_{\vect{U_i}}(\BP_\msl,\markfunc^\msp,\otroot) \bigr) \\
&\phantom{{}\leq{}}+ \sum_{i\in\CI(\partt,{\leq}K)} \bigl\lvert \P\bigl( \ballevent_{\vect{U_i}}(\BCM_n,\markfunc^\msp,V_n^\msl) \bcond \bmatch_n \bigr) 
- \P\bigl( \ballevent_{\vect{U_i}}(\BP_\msl,\markfunc^\msp,\otroot) \bigr) \bigr\rvert.
\eal \eeq
We study the finite sum first. By \cref{lem:LWC_BCM_generalized}, for each $i\in\CI(\partt,{\leq}K)$, whp
\beq\label{eq:RIGC_LWC_2} \bigl\lvert \P\bigl( \ballevent_{\vect{U_i}}(\BCM_n,\markfunc^\msp,V_n^\msl) \bcond \bmatch_n \bigr) 
- \P\bigl( \ballevent_{\vect{U_i}}(\BP_\msl,\markfunc^\msp,\otroot) \bigr) \bigr\rvert
\leq \frac{\eps}{2 \abs{\CI(\partt,{\leq}K)}}. \eeq
Now, we look at the first two terms on the rhs of \eqref{eq:RIGC_LWC_1}. By the definition of the set $\CI(\partt,{>}K)$,
\begin{subequations}\label{eq:RIGC_LWC_3}
\begin{align}
\begin{split} &\P\bigl( \disjointunion_{i\in\CI(\partt,{>}K)} \ballevent_{\vect{U_i}}(\BCM_n,\markfunc^\msp,V_n^\msl) \bcond \bmatch_n \bigr) \\
&\phantom{{}={}}\leq \P\bigl( \max \{ \bdeg(v): v\in B_{2r}(\BCM_n,\markfunc^\msc,V_n^\msl) \} > K \bcond \bmatch_n \bigr), \end{split} \\
\begin{split} \P\bigl( \disjointunion_{i\in\CI(\partt,{>}K)} \,\ballevent_{\vect{U_i}}(\BP_\msl,\markfunc^\msp,\otroot) \bigr) 
\leq \P\bigl( \max \{ \bdeg(v): v\in B_{2r}(\BP_\msl,\markfunc^\msp,\otroot) \} > K \bigr), \end{split}
\end{align}
\end{subequations}
which both are whp smaller than $\eps/4$ by \eqref{eq:preimage_truncation}. 
Combining (\ref{eq:preimage_truncation}-\ref{eq:RIGC_LWC_3}), we obtain that for any $\eps>0$, whp 
\beq \Bigl\lvert \P\bigl( \disjointunion_{i\in\CI(\partt)} \ballevent_{\vect{U_i}}(\BCM_n,\markfunc^\msp,V_n^\msl) \bcond \bmatch_n \bigr) 
- \P\bigl( \disjointunion_{i\in\CI(\partt)} \,\ballevent_{\vect{U_i}}(\BP_\msl,\markfunc^\msp,\otroot) \bigr) \Bigr\rvert \leq \eps, \eeq
which is equivalent to \eqref{eq:preimages_onepartition_conv}. Since we have previously reduced \cref{thm:LWC_RIGC} to this statement, this concludes the proof of \cref{thm:LWC_RIGC}.
\end{proof}

\FloatBarrier

\subsection{Degrees and clustering}
\label{ss:deg_clust}

Recall the definition of $(\rCP,\groot)$, the local weak limit of the $\RIGC$, as the $\wih\SP$-projection of $(\BP_\msl,\markfunc^\msp,\otroot)$ from \cref{ss:construction_LWClim_RIGC}. By this construction, it is clear that $D^\msp$ (see \eqref{eq:def_Dproj}) and $\zeta$ (see \eqref{eq:def_zeta}) describe the degree and local clustering coefficient of $\groot \in \rCP$, respectively. Further recall the empirical degree $D_n^\msp$ (see (\ref{eq:def_Dnproj}-\ref{eq:def_Fnproj})) and empirical local clustering $\zeta_n$ (see (\ref{eq:def_clus_empirical}-\ref{eq:def_zeta})). 
By $(\RIGC_n,V_n^\msl)\toLWCP(\rCP,\groot)$, it is intuitive that $D_n^\msp\toindis D^\msp$ and $\zeta_n \toindis \zeta$. We complete the formal proof of the stronger statements \eqref{eq:fn_degree_conv} and \eqref{eq:clustering_conv} below.

\begin{proof}[Proof of \cref{cor:deg,cor:clustering}]
Recall that $\P(\,\cdot \cond \bmatch_n)$ denotes conditional probability wrt $\bmatch_n$ and $\E[\,\cdot \cond \bmatch_n]$ denotes the corresponding conditional expectation. 
Further, denote by $\P_\groot$ and $\E_\groot$ the probability measure of $(\rCP,\groot)$ and the corresponding expectation. For arbitrary \emph{fixed} $x\in\R$, we define the functionals
\beq \bal \varphi_x, \psi_x, \overline{\psi}_x: \graphs_\groot\to\{0,1\},
\quad &\varphi_x(G,\groot) := \ind_{\{\deg(\groot)\leq x\}}, \\
&\psi_x(G,\groot):= \ind_{\{\clustering(\groot)\leq x\}},
\quad \overline{\psi}_x(G,\groot):= \ind_{\{\clustering(\groot) < x\}}.
\eal \eeq
Clearly, all three functionals are bounded, and only depend on a finite neighborhood of $\groot$, thus they are continuous in the metric space $(\graphs_\groot,\dloc)$ (see \cref{ss:LWC_def}). 
Note that we can express the (empirical) cdfs from \eqref{eq:def_Fnproj}, \eqref{eq:def_Dproj}, \eqref{eq:def_clus_empirical} and \eqref{eq:def_zeta} respectively as
\beq\bal F_n^\msp(x) &= \E\bigl[ \varphi_x(\RIGC_n,V_n^\msl) \bcond \bmatch_n \bigr],
& F^\msp(x) &= \E_\groot \bigl[ \varphi_x(\rCP,\groot) \bigr], \\
F_n^\zeta(x) &= \E\bigl[ \psi_x(\RIGC_n,V_n^\msl) \bcond \bmatch_n \bigr],
& F^\zeta(x) &= \E_\groot \bigl[ \psi_x(\rCP,\groot) \bigr].
\eal\eeq
Let us denote $f(x-):=\lim_{\eps\searrow0}f(x-\eps)$. Then also
\begin{gather} F^\zeta(x-) 
= \lim_{\eps\searrow0} \P_\groot \bigl( \zeta \leq x-\eps \bigr)
= \P_\groot \bigl( \zeta < x \bigr) 
= \E_\groot \bigl[ \overline{\psi}_x \bigl((\rCP,\groot)\bigr) \bigr], \\
F_n^\zeta(x-) = \E\bigl[ \overline{\psi}(\RIGC_n,V_n^\msl) \bcond \bmatch_n \bigr]. 
\end{gather}
\Cref{thm:LWC_RIGC} asserts $(\RIGC_n,V_n^\msl) \toLWCP (\rCP,\groot)$, thus, using the equivalent definition of LWC in probability \eqref{eq:def_LWCPphi}, for any \emph{fixed} $x\in\R$, as $n\to\infty$,
\beq\label{eq:cor_degclust_1} F_n^\msp(x) \toinp F^\msp(x), 
\quad F_n^\zeta(x) \toinp F^\zeta(x), 
\quad F_n^\zeta(x-) \toinp F^\zeta(x-). \eeq
That is, we have established pointwise convergence of the cdfs. In the following, we show that it in fact implies convergence in sup-norm as well, by a truncation and discretization argument for the degrees and clustering, respectively, starting with the degrees. As $D_n^\msp$ and $D^\msp$ are $\N$-valued random variables,
\beq \sup_{x\in\R}\, \bigl\lvert F_n^\msp(x) - F^\msp(x) \bigr\rvert 
= \sup_{k\in\N}\, \bigl\lvert F_n^\msp(k) - F^\msp(k) \bigr\rvert. \eeq
Choose $K=K(\eps)\in\N$ minimal such that $F^\msp(K)>1-\tfrac13\eps$. Then by \eqref{eq:cor_degclust_1}, $F_n^\msp(K) > 1-\tfrac23\eps$ whp. By the triangle inequality and the monotonicity of distribution functions, whp for all $k\geq K$,
\beq \bal \bigl\lvert F_n^\msp(k) - F^\msp(k) \bigr\rvert 
\leq 1 - F_n^\msp(k) + 1 - F(k) 
\leq 1 - F_n^\msp(K) + 1 - F(K) 
< \tfrac23\eps + \tfrac13\eps = \eps. \eal \eeq
That is, $\max_{k \geq K} \bigl\lvert F_n^\msp(k) - F^\msp(k) \bigr\rvert \leq \eps$ whp. By \eqref{eq:cor_degclust_1}, clearly the \emph{finite} maximum $\max_{k < K} \bigl\lvert F_n^\msp(k) - F^\msp(k) \bigr\rvert \leq \eps$ whp as well. Combining the above, we conclude that 
\beq \norm{F_n^\msp - F^\msp}_\infty 
= \max\Bigl\{ \max_{k<K}\, \bigl\lvert F_n^\msp(k) - F(k) \bigr\rvert ,\; \max_{k<K}\, \bigl\lvert F_n^\msp(k) - F(k) \bigr\rvert \Bigr\}
\leq \eps \text{ whp}. \eeq
This is equivalent to the convergence in probability in \eqref{eq:fn_degree_conv}, and concludes the proof of \cref{cor:deg}. We move on to study the distribution of the local clustering. As $\zeta_n$ and $\zeta$ take potentially all rational values in $[0,1]$, a different approach is required. First, we write
\beq \sup_{x\in\R}\, \bigl\lvert F_n^\zeta(x) - F^\zeta(x) \bigr\rvert 
= \sup_{x\in[0,1)}\, \bigl\lvert F_n^\zeta(x) - F^\zeta(x)\bigr\rvert, \eeq
and in the following, we discretize this supremum. Since $F^\zeta$ is a cdf, consequently non-decreasing and taking values between $0$ and $1$, there must exist $K=K(\eps)<\infty$ and a \emph{finite} sequence $0=z_0<z_1<\ldots<z_K=1$ such that for all $k=0,1,\ldots,K-1$,
\beq\label{eq:cor_degclust_2} \bigl\lvert F^\zeta(z_{k+1}-) - F^\zeta(z_k) \bigr\rvert < \eps/3. \eeq
Define the ``good event'' as
\beq\label{eq:cor_degclust_goodevent} \goodevent_n := \bigcap_{k=0}^{K-1} \bigl( \bigl\{ \lvert F_n^\zeta(z_k) - F^\zeta(z_k) \rvert < \eps/3 \bigr\} 
\cap \bigl\{ \lvert F_n^\zeta(z_{k+1}-) - F^\zeta(z_{k+1}-) \rvert < \eps/3 \bigr\} \bigr). \eeq
By \eqref{eq:cor_degclust_1}, each event on the rhs happens whp, thus the finite intersection $\goodevent_n$ also happens whp. 
\emph{On the event $\goodevent_n$,} using (\ref{eq:cor_degclust_2}-\ref{eq:cor_degclust_goodevent}) and that $F_n^\zeta$ is non-decreasing, we bound the empirical cdf for any $k$ and all $x\in[z_k,z_{k+1})$ as
\begin{subequations}\label{eq:cor_degclust_3}
\begin{gather}  
F_n^\zeta(x) 
\geq F_n^\zeta(z_k) 
> F^\zeta(z_k) - \tfrac13\eps, \\
F_n^\zeta(x) 
\leq F_n^\zeta(z_{k+1}-) 
< F^\zeta(z_{k+1}-) + \tfrac13\eps
< F^\zeta(z_k) + \tfrac23\eps.
\end{gather}
\end{subequations}
Using \eqref{eq:cor_degclust_2} and that $F^\zeta$ is non-decreasing, we bound the limiting cdf for any $k$ and all $x\in[z_k,z_{k+1})$ as 
\beq\label{eq:cor_degclust_4} F^\zeta(x) 
\geq F^\zeta(z_k), 
\qquad F^\zeta(x) 
\leq F^\zeta(z_{k+1}-) 
< F^\zeta(z_k) + \tfrac13\eps. \eeq
Combining (\ref{eq:cor_degclust_3}-\ref{eq:cor_degclust_4}) through the triangle inequality yields that, on the event $\goodevent_n$, 
\beq \sup_{x\in[z_k,z_{k+1})} \, \bigl\lvert F_n^\zeta(x) - F^\zeta(x) \bigr\rvert < \eps, \eeq
for all $k=0,1,\ldots,K-1$. Recall that $\cup_{k=0}^{K-1} [z_k,z_{k+1}) = [0,1)$. Consequently on the event $\goodevent_n$, which happens whp,
\beq \norm{F_n^\zeta - F^\zeta}_\infty 
= \sup_{x\in[0,1)}\, \bigl\lvert F_n^\zeta(x) - F^\zeta(x) \bigr\rvert < \eps, \eeq
which is equivalent to \eqref{eq:clustering_conv}. This concludes the proof of \cref{cor:clustering}.
\end{proof}

\subsection{The overlapping structure}
\label{ss:overlaps}
In this section, we prove \cref{prop:number_of_overlaps} and \cref{thm:overlaps} on the typical number and size of overlaps in the $\RIGC$ model. First, we prove \cref{thm:overlaps} (\ref{perspective:vx}-\ref{perspective:group}), that follow directly from \cref{thm:LWC_RIGC}, then prove \cref{prop:number_of_overlaps} and \cref{thm:overlaps} \cref{perspective:global}, which also require the second moment condition \eqref{cond:secondmom} and a slightly different approach. We make use of the following notation. 
Recall that $V_n^\msl \sim \Unif[\leftpar]$, $V_n^\msr \sim \Unif[\rightpar]$ and $V_n^\msb \sim \Unif[\leftpar\cup\rightpar]$. Further recall that $\P(\,\cdot \cond \bmatch_n)$ denotes conditional probability wrt $\bmatch_n$ (i.e., conditionally on the graph realization), and $\E[\,\cdot \cond \bmatch_n]$ denotes the corresponding conditional expectation (i.e., partial average over the choice of the uniform vertex). 

\subsubsection{Proof of Theorem \ref{thm:overlaps} (\ref{perspective:vx}-\ref{perspective:group})}
\label{sss:overlaps_local}
An overlap of size (at least) two happens in the $\RIGC$ exactly when there are two individuals that are part of two groups together. In the underlying $\BCM$, these two individuals and two groups form a $K_{2,2}$ complete bipartite graph, which we can also look at as a $4$-cycle. 
Thus in the following, we study $4$-cycles through \emph{typical}, i.e., uniformly chosen, vertices in the $\BCM$. 
Recall the notion of local weak convergence from \cref{ss:LWC_def}, and in particular the set $\graphs_\groot$ of rooted graphs and the metric $\dloc$ defined on it. We define the functional $\indc4$ on $\graphs_\groot$ as the indicator that there is a $4$-cycle containing the root. Note that $\indc4 \in \Phi$ (see \eqref{eq:def_testfunctionals}): it is clearly bounded, and since it only depends on the $2$-neighborhood of the root, also continuous. 
\Cref{thm:LWC_BCM} 
the LWC in probability of the $\BCM$, thus by the equivalent definition \eqref{eq:def_LWCPphi},
\beq\label{eq:overlaps_proof_1} \E\bigl[ \indc4(\BCM_n,V_n^\msb) \bcond \bmatch_n\bigr] 
\toinp \E\bigl[ \indc4(\BP_\mss,\otroot) \bigr] = 0. \eeq
Recall (\ref{eq:def_intersection_size}-\ref{eq:def_overlapset}). We can rewrite the lhs of \eqref{eq:typical_overlap_vx} and \eqref{eq:typical_overlap_group} respectively as
\begin{gather}
\P\bigl( \exists \{a,b\} \in\overlapset_2 :\, V_n^\msl \comrole \com_a, V_n^\msl \comrole \com_b \bcond \bmatch_n \bigr) 
= \E\bigl[ \indc4 (\BCM_n,V_n^\msl) \bcond \bmatch_n \bigr], \\
\P\bigl( \exists b\in\neighbors(V_n^\msr):\, \intersection(V_n^\msr,b) \geq 2 \bcond \bmatch_n \bigr) 
= \E\bigl[ \indc4 (\BCM_n,V_n^\msr) \bcond \bmatch_n \bigr].
\end{gather}
By the definition of the partial average,
\beq\label{eq:partialaverage_bound} \bal &\E\bigl[ \indc4 (\BCM_n,V_n^\msl) \bcond \bmatch_n \bigr]
= \frac{1}{N_n} \sum_{v\in\leftpar} \indc4 (\BCM_n,v) \\
&\leq \frac{N_n+M_n}{N_n} \frac{1}{N_n+M_n}
\sum_{v\in\leftpar\cup\rightpar} \indc4 (\BCM_n,v) 
= \frac{N_n+M_n}{N_n} \cdot
\E\bigl[ \indc4 (\BCM_n,V_n^\msb) \bcond \bmatch_n \bigr],
\eal \eeq
and analogously,
\beq \E\bigl[ \indc4 (\BCM_n,V_n^\msr) \bcond \bmatch_n \bigr] 
\leq \frac{N_n+M_n}{M_n} \,
\E\bigl[ \indc4 (\BCM_n,V_n^\msb) \bcond \bmatch_n \bigr]. \eeq
By \cref{rem:asmp_consequences} \cref{cond:partition_ratio}, as $n\to\infty$,
\beq\label{eq:overlaps_proof_2} (N_n+M_n)/N_n \to 1+\gamma < \infty, 
\quad (N_n+M_n)/M_n \to (1+\gamma)/\gamma < \infty. \eeq
Combining (\ref{eq:overlaps_proof_1}-\ref{eq:overlaps_proof_2}) yields \eqref{eq:typical_overlap_vx} and \eqref{eq:typical_overlap_group}, as required. This concludes the proof of \cref{thm:overlaps} (\ref{perspective:vx}-\ref{perspective:group}).
\proofends

\subsubsection{Proof of Proposition \ref{prop:number_of_overlaps}}
\label{sss:overlaps_number}
As before, we want to reduce \cref{prop:number_of_overlaps} to local weak convergence. Thus, we define the functional $\varphi$ on $\graphs_\groot$ (see \cref{ss:LWC_def} for the notation) that counts the number of vertices at graph distance $2$ from the root, i.e., for $(G,\groot)\in\graphs_\groot$,
\beq \varphi(G,\groot) := \abs{\boundary_2(G,\groot)}. \eeq
Recall (\ref{eq:def_intersection_size}-\ref{eq:def_overlap_neighbor}). We can rewrite the lhs of \eqref{eq:number_of_overlaps} as
\beq\label{eq:numberoverlaps_1} \begin{split} \frac{2 \abs{ \overlapset_1 }}{M_n} 
= \E\bigl[ \abs{\neighbors(V_n^\msr)} \bcond \bmatch_n \bigr] 
= \E \bigl[ \varphi(\BCM_n,V_n^\msr) \bcond \bmatch_n \bigr]. \end{split} \eeq
Recall $(\BP_\msr,\otroot)$ from \cref{ss:construction_LWClim_BCM} and note that
\beq\label{eq:numberoverlaps_2} \E\bigl[ \varphi(\BP_\msr,\otroot) \bigr] = \E[D^\msr] \E[\wit D^\msl], \eeq
which is exactly the proposed limit of \eqref{eq:numberoverlaps_1}. 
It is tempting to conclude the result by \cref{prop:LWC_BCM_marked} and \eqref{eq:def_LWCphi} as before, however, \eqref{eq:def_LWCphi} is not applicable, since $\varphi$ is \emph{not} in $\Phi$ (see \eqref{eq:def_testfunctionals}). While $\varphi$ only depends on a finite neighborhood of the root and is necessarily continuous, it is \emph{not} bounded. 
Instead, we rely on a truncation argument to establish 
convergence in probability, 
in the following form: 
for any fixed $\eps,\delta>0$ and $n$ large enough (possibly depending on $\eps$ and $\delta$),
\beq\label{eq:numberoverlaps_3} \P\Bigl( \bigl\lvert\, \E \bigl[ \varphi(\BCM_n,V_n^\msr) \bcond \bmatch_n \bigr] - \E\bigl[ \varphi(\BP_\msr,\otroot) \bigr] \,\bigr\rvert > \eps \Bigr) < \delta. \eeq
With some $K=K(\eps,\delta)\in\N$ to be specified later, we decompose
\begin{subequations}
\begin{align} \nonumber &\P\Bigl( \bigl\lvert\, \E \bigl[ \varphi(\BCM_n,V_n^\msr) \bcond \bmatch_n \bigr] - \E\bigl[\varphi(\BP_\msr,\otroot)\bigr] \,\bigr\rvert > \eps \Bigr) \\
\label{eq:overlaps_decomp1} &\leq \P\Bigl( \bigl\lvert\, \E \bigl[ \varphi(\BCM_n,V_n^\msr) \bcond \bmatch_n \bigr] - \E \bigl[ \varphi(\BCM_n,V_n^\msr) \midmin K \bcond \bmatch_n \bigr] \,\bigr\rvert > \eps/3 \Bigr) \\
\label{eq:overlaps_decomp2} &\phantom{{}\leq{}}+ \P\Bigl( \bigl\lvert\, \E \bigl[ \varphi(\BCM_n,V_n^\msr) \midmin K \bcond \bmatch_n \bigr] - \E\bigl[\varphi(\BP_\msr,\otroot) \midmin K \bigr] \,\bigr\rvert > \eps/3 \Bigr) \\
\label{eq:overlaps_decomp3} &\phantom{{}\leq{}}+ \P\Bigl( \bigl\lvert\, \E\bigl[\varphi(\BP_\msr,\otroot) \midmin K \bigr]  - \E\bigl[\varphi(\BP_\msr,\otroot)\bigr] \,\bigr\rvert > \eps/3 \Bigr).
\end{align}
\end{subequations}
We study \eqref{eq:overlaps_decomp2} first. 
Recall $\markset^\msb=\{\msl,\msr\}$ and denote $x\midmin y := \min\{x,y\}$. We define the \emph{bounded} functional $\varphi_K$ on $\graphs_\groot(\markset^\msb)$ as 
\beq\label{eq:def_neighbors_truncation} \varphi_K(G,\markfunc,\groot) := \ind_{\{\markfunc(\groot)=\msr\}} \cdot 
\bigl(\varphi(G,\groot) \midmin K\bigr). \eeq
Note that $\varphi_K$ is also continuous, as it only depends on a finite neighborhood of the root. 
By properties of conditional expectation, we can now rewrite
\beq\label{eq:overlaps_proof_3} \bal \E \bigl[ \varphi_K(\BCM_n,\markfunc^\msb,V_n^\msb) \bcond \bmatch_n \bigr] 
&= \P\bigl( V_n^\msb \in \rightpar \bigr) \E\bigl[ \varphi(\BCM_n,V_n^\msb) \bcond \bmatch_n, V_n^\msb\in\rightpar \bigr] \\
&= \frac{M_n}{N_n+M_n} \E \bigl[ \varphi(\BCM_n,V_n^\msr) \midmin K \bcond \bmatch_n \bigr], \eal \eeq
and analogously,
\beq\label{eq:overlaps_proof_4} \E\bigl[ \varphi_K(\BP_\mss,\markfunc^\mss,\otroot) \bigr] 
= \frac{\gamma}{1+\gamma} \E\bigl[ \varphi(\BP_\msr,\otroot)) \midmin K \bigr]. \eeq
By \cref{prop:LWC_BCM_marked} and \eqref{eq:def_LWCPphi}, the lhs of \eqref{eq:overlaps_proof_3} converges in probability to the lhs of \eqref{eq:overlaps_proof_4} and $M_n/(N_n+M_n)\to\gamma/(1+\gamma)$ by \cref{rem:asmp_consequences} \cref{cond:partition_ratio}. Necessarily
$\E \bigl[ \varphi(\BCM_n,V_n^\msr) \midmin K \bcond \bmatch_n \bigr] 
\toinp \E\bigl[ \varphi(\BP_\msr,\otroot) \midmin K \bigr]$, 
or equivalently, for any $\eps,\delta>0$ fixed and $n$ large enough,
\beq\label{eq:overlaps_proof_term1} \P\Bigl( \bigl\lvert\, \E\bigl[ \varphi(\BCM_n,V_n^\msr) \midmin K \bcond \bmatch_n \bigr] 
- \E\bigl[ \varphi(\BP_\msr,\otroot) \midmin K \bigr] \,\bigr\rvert 
> \eps/3 \Bigr) < \delta/2. \eeq
Next, we study \eqref{eq:overlaps_decomp3}. Recall $D^\msl$ and $D^\msr$ from \cref{asmp:convergence} \cref{cond:limit_ldeg} and \cref{cond:limit_rdeg}, and recall \eqref{eq:def_sizebiasing}. 
By the definition of $\BP_\msr$ in \cref{ss:construction_LWClim_BCM}, $\varphi(\BP_\msr,\otroot) \eqindis \sum_{i=1}^{D^\msr} \wit D_{(i)}^\msl$, where $\wit D_{(i)}^\msl$ are iid copies of $\wit D^\msl$. Under the second moment condition \eqref{cond:secondmom}, $\E[\wit D^\msl]<\infty$, thus $\E\bigl[ \varphi(\BP_\msr,\otroot) \bigr] < \infty$. We now choose and fix $K=K(\eps,\delta)$ large enough so that
\beq\label{eq:overlaps_proof_term2} 
0 \leq \E\bigl[ \varphi(\BP_\msr,\otroot) \bigr] - \E\bigl[ \varphi(\BP_\msr,\otroot) \midmin K \bigr] < (\eps/3) \midmin (\eps\delta/18). \eeq
Consequently, the probability in \eqref{eq:overlaps_decomp3} equals $0$. 
Finally, we prove below that for large enough $n$, 
\beq\label{eq:overlaps_proof_term3} \P\Bigl( 
\bigl\lvert\, \E\bigl[ \varphi(\BCM_n,V_n^\msr) \midmin K \bcond \bmatch_n \bigr] 
- \E\bigl[ \varphi(\BCM_n,V_n^\msr) \bcond \bmatch_n \bigr] \,\bigr\rvert 
> \eps/3 \Bigr) < \delta/2. \eeq
Combining \eqref{eq:overlaps_proof_term1}, \eqref{eq:overlaps_proof_term2} and \eqref{eq:overlaps_proof_term3} yields \eqref{eq:numberoverlaps_3}, which concludes the proof of \cref{prop:number_of_overlaps} subject to \eqref{eq:overlaps_proof_term3}. 
We now prove \eqref{eq:overlaps_proof_term3} using a first moment method. The advantage of this method is that taking expectation removes the conditioning on $\bmatch_n$, and we better understand the distribution of $\varphi(\BCM_n,V_n^\msr)$ with \emph{both} sources of randomness, i.e., $V_n^\msr$ and $\bmatch_n$. Using that 
$\varphi(\BCM_n,V_n^\msr) \midmin K \leq \varphi(\BCM_n,V_n^\msr)$, we compute
\beq\label{eq:overlaps_proof_5} \bal &\E\Bigl[ \bigl\lvert\, \E\bigl[ \varphi(\BCM_n,V_n^\msr) \midmin K \bcond \bmatch_n \bigr] 
- \E\bigl[ \varphi(\BCM_n,V_n^\msr) \bcond \bmatch_n \bigr] \,\bigr\rvert \Bigr] \\
&= \E\Bigl[ \E\bigl[ \varphi(\BCM_n,V_n^\msr) \bcond \bmatch_n \bigr] 
- \E\bigl[ \varphi(\BCM_n,V_n^\msr) \midmin K \bcond \bmatch_n \bigr] \Bigr] \\
&= \E \bigl[ \varphi(\BCM_n,V_n^\msr) \bigr] 
- \E \bigl[ \varphi(\BCM_n,V_n^\msr) \midmin K \bigr]. \eal \eeq
Under the joint measure of $\bmatch_n$ and $V_n^\msr$, 
the following \emph{stochastic domination},\footnote{We do not make this argument explicit, but note that $\abs{\boundary_2(\BCM_n,V_n^{\msr})}$ is the largest possible when all community roles of the uniform community $V_n^\msr$ are taken by different individuals, and all further memberships of these individuals are taken in different communities.} denoted by $\preceq$, holds:
\beq\label{eq:overlaps_proof_6} \varphi(\BCM_n,V_n^\msr) 
= \abs{\boundary_2(\BCM_n,V_n^\msr)} 
\preceq \sum_{i=1}^{D_n^\msr} (d_{\pi(i)}^\msl - 1), \eeq
where $D_n^\msr$ was defined in \eqref{eq:def_ldeg_rdeg} and $(d_{\pi(i)}^\msl)_{i\leq N_n}$ denotes a \emph{size-biased reordering} (defined below) of $\bitd^{\msl}$, independently of $D_n^\msr$. We define the size-biased reordering by the random permutation $(\pi(i))_{i\leq N_n}$ as follows. Denote the set of already chosen indices by $\Pi_1^0 := \{\,\}$, $\Pi_1^i := \{ \pi(1),\ldots,\pi(i) \}$ for $i>0$, then for $i=0,1,\ldots,N_n-1$,
\beq\label{eq:def_sizebiased_reordering} \P\bigl( \pi(i+1) = k \bcond \Pi_1^i \bigr) = 
\begin{cases}
0 & \text{for $k\in\Pi_1^i$},\\
\displaystyle \frac{d_k^\msl}{\sum_{j\not\in\Pi_1^i} d_j^\msl} & \text{otherwise}.
\end{cases}
\eeq
In the following, we bound the expectation of the rhs of \eqref{eq:overlaps_proof_6}. Clearly, $D_n^\msr \leq \dmax{\msr}$ almost surely, and by \cref{rem:asmp_consequences} \cref{cond:dmax}, $\dmax{\msr} = o(\he_n)$, hence for any $\delta'>0$, for $n$ large enough, $D_n^\msr \leq \dmax{\msr} \leq \delta' N_n$ almost surely. We will choose $\delta'$ later, and now study $\E\bigl[ d_{\pi(i)}^\msl \bigr]$ for $i\leq \delta' N_n$. Let $d_{(k)}^\msl$ denote the $k\ith$ largest element of $\bitd^\msl$ (with ties broken arbitrarily). 
\begin{gather}
\label{eq:overlaps_proof_7} \E\bigl[ d_{\pi(1)}^\msl \bigr] 
= \frac{\sum_{v\in[N_n]} (d_v^\msl)^2}{\sum_{v\in[N_n]} d_v^\msl }
= \E\bigl[ D_n^{\msl,\star} \bigr]
= \E\bigl[ \wit D_n^\msl \bigr] + 1 < \infty, \\
\label{eq:overlaps_proof_8} \E\bigl[ d_{\pi(i+1)}^\msl \bigr] 
= \E\Bigl[ \E\bigl[ d_{\pi(i+1)}^\msl \bcond \Pi_1^i \bigr] \Bigr]
= \E\biggl[ \frac{\sum_{v\not\in\Pi_1^i} (d_v^\msl)^2}{\sum_{v\not\in\Pi_1^i} d_v^\msl} \biggr]
\leq \frac{\sum_{v\in[N_n]} (d_v^\msl)^2}{\sum_{v\in[N_n]} d_v^\msl - \sum_{k\in[i]} d_{(k)}^\msl}
\end{gather}
almost surely, by taking the worst-case scenario. 
We claim that for any $1/2>\eps'>0$ and $i \leq \delta' N_n$ with an appropriate $\delta' = \delta'(\eps)$ and $n$ large enough, 
\beq\label{eq:overlaps_proof_10} \E\bigl[ d_{\pi(i+1)}^\msl \bigr] 
\leq \frac{1}{1-\eps'} \E\bigl[ d_{\pi(1)}^\msl \bigr] 
= \frac{1}{1-\eps'} \E[D_n^{\msl,\star}]
\leq (1+2\eps') \E[D_n^{\msl,\star}]. \eeq
Comparing \eqref{eq:overlaps_proof_7} and \eqref{eq:overlaps_proof_8}, clearly it is sufficient to show that $\sum_{k\in[i]} d_{(k)}^\msl \leq \eps' \he_n$, with $\he_n = \sum_{v\in[N_n]} d_v^\msl$ (see \eqref{eq:def_halfedges}) and $i \leq \delta' N_n$. To choose an appropriate $\delta'$, first note that $\E[D^\msl] < \infty$ by \cref{asmp:convergence} \cref{cond:mean_ldeg}, thus we can choose $K' = K'(\eps)$ so that $\E[D^\msl \ind_{\{D^\msl>K'\}}] \leq (\eps'/2)\, \E[D^\msl]$. 
Now define $\delta':= \P(D^\msl > K')/2$, so that 
for $n$ large enough, $\P(D_n^\msl>K')>\delta'$; equivalently, $d_{(\floor{\delta' N_n})}^\msl > K'$. Thus,
\beq\label{eq:overlaps_proof_9} \sum_{k\in[i]} d_{(k)}^\msl 
\leq \sum_{k \leq \delta' N_n} d_{(k)}^\msl 
= \sum_{k \leq \delta' N_n} d_{(k)}^\msl \ind_{\{d_{(k)}^\msl > K'\}} 
\leq \sum_{v\in[N_n]} d_v^\msl \ind_{\{d_v^\msl > K'\}} 
= N_n \E[D_n^\msl \ind_{\{D_n^\msl > K'\}}]. \eeq
By \cref{asmp:convergence} \cref{cond:mean_ldeg}, the collection $(D_n^\msl)_{n\in\N}$ is uniformly integrable, thus $\E[D_n^\msl \ind_{\{D_n^\msl > K'\}}] \to \E[D^\msl \ind_{\{D^\msl > K'\}}] \leq (\eps'/2)\, \E[D^\msl]$ as $n\to\infty$. Further, by \cref{rem:asmp_consequences} \cref{cond:partition_ratio}, $N_n/\he_n \to 1/\E[D^\msl]$. That is, $N_n \E[D_n^\msl \ind_{\{D_n^\msl > K'\}}] / \he_n \to \eps'/2$, hence \eqref{eq:overlaps_proof_9} implies that
for $n$ large enough, $\sum_{k\in[i]} d_{(k)}^\msl \leq \eps' \he_n$ for all $i\leq \delta' N_n$, as required, which concludes the proof of \eqref{eq:overlaps_proof_10}. 

We now combine the above results. 
Recall that $D_n^\msr$ is independent from the size-biased reordering $(d_{\pi(i)}^\msl)_{i\leq N_n}$, and that $D_n^\msr \leq \dmax{\msr} \leq \delta' N_n$ almost surely for $n$ large enough. Taking expectation in \eqref{eq:overlaps_proof_6} and using \eqref{eq:overlaps_proof_10}, we obtain
\beq\label{eq:overlaps_proof_11} \begin{split} 
&\E\bigl[ \varphi(\BCM_n,V_n^\msr) \bigr] 
\leq \E\Bigl[ \sum_{i=1}^{D_n^\msr} \bigl( d_{\pi(i)}^\msl - 1 \bigr) \Bigr] 
= \E\Bigl[ \E\Bigl[ \sum_{i=1}^{D_n^\msr} \bigl( d_{\pi(i)}^\msl - 1 \bigr) \bcond D_n^\msr \Bigr] \Bigr] \\
&= \E\Bigl[ \sum_{i=1}^{D_n^\msr} \bigl( \E\bigl[ d_{\pi(i)}^\msl \bigr] - 1 \bigr) \Bigr] 
\leq \E\Bigl[ \sum_{i=1}^{D_n^\msr} \bigl( (1+2\eps') \E[D_n^{\msl,\star}] - 1] \bigr) \Bigr] 
= \E[D_n^\msr] \bigl( \E[\wit D_n^\msl] + 2\eps'\E[D_n^{\msl,\star}] \bigr), 
\end{split} \eeq
since $\E[\wit D_n^\msl] = \E[D_n^{\msl,\star}] - 1$. By \cref{asmp:convergence} \cref{cond:mean_rdeg}, $\E[D_n^\msr]\to\E[D^\msr]<\infty$, and \eqref{cond:secondmom} ensures that $\E[\wit D_n^\msl] \to \E[\wit D^\msl] < \infty$, as well as that $(\E[D_n^{\msl,\star}])_{n\in\N}$ is bounded. Thus, for any $\eps,\delta>0$, with $\eps' = \eps'(\eps,\delta)$ above chosen appropriately, for $n$ large enough,
\beq\label{eq:overlaps_proof_12} \E\bigl[ \varphi(\BCM_n,V_n^\msr) \bigr] 
\leq \E[D^\msr] \E[\wit D^\msl] + \eps\delta/18 
= \E\bigl[\varphi(\BP_\msr,\otroot)\bigr] + \eps\delta/18. \eeq
That is, we have obtained a bound on $\E\bigl[ \varphi(\BCM_n,V_n^\msr) \bigr]$ uniformly in $n$. However, this is \emph{not} enough to obtain a bound on $\E \bigl[ \varphi(\BCM_n,V_n^\msr) \bigr] - \E \bigl[ \varphi(\BCM_n,V_n^\msr) \midmin K \bigr]$ uniformly in $n$; such a statement requires uniform integrability. Instead, we rely on another triangle inequality and our previous results. 
Recall that by the choice of $K$ in \eqref{eq:overlaps_proof_term2},
\beq 0 \leq \E\bigl[ \varphi(\BP_\msr,\otroot) \bigr] - \E\bigl[ \varphi(\BP_\msr,\otroot) \midmin K \bigr] < \eps\delta/18. \eeq
Since $\E\bigl[ \varphi(\BCM_n,V_n^\msr) \midmin K \bcond \bmatch_n \bigr]$ is bounded by $K$, and $\E\bigl[ \varphi(\BP_\msr,\otroot) \midmin K \bigr]$ is a constant, the convergence in probability in \eqref{eq:overlaps_proof_term1} implies convergence of mean. Thus, for $n$ large enough,
\beq\label{eq:overlaps_proof_14} \bigl\lvert\, \E\bigl[ \varphi(\BCM_n,V_n^\msr) \midmin K \bigr] 
- \E\bigl[ \varphi(\BP_\msr,\otroot) \midmin K \bigr] \,\bigr\rvert < \eps\delta/18. \eeq
Noting that $\E\bigl[ \varphi(\BCM_n,V_n^\msr) \midmin K \bigr] \leq \E\bigl[ \varphi(\BCM_n,V_n^\msr) \bigr]$ and combining (\ref{eq:overlaps_proof_12}-\ref{eq:overlaps_proof_14}) via the triangle inequality, we obtain that for $n$ large enough,
\beq \E\bigl[ \varphi(\BCM_n,V_n^\msr) \bigr] 
- \E\bigl[ \varphi(\BCM_n,V_n^\msr) \midmin K \bigr] 
\leq 3 \eps\delta/18 = \eps\delta/6. \eeq
Then \eqref{eq:overlaps_proof_term3} follows by Markov's inequality. Since we have proved \cref{prop:number_of_overlaps} subject to \eqref{eq:overlaps_proof_term3}, this concludes the proof of \cref{prop:number_of_overlaps}. 
\proofends

\subsubsection{Proof of Theorem \ref{thm:overlaps} (\ref{perspective:global})}
\label{sss:overlaps_global}
Recall $\intersection(a,b)$ and $\overlapset_k$ from (\ref{eq:def_intersection_size}-\ref{eq:def_overlapset}). By \cref{prop:number_of_overlaps}, $\abs{\overlapset_1}$ is of order $M_n$, thus to show that $\abs{\overlapset_2}/\abs{\overlapset_1} = o_{\sss\P}(1)$, it is sufficient to prove that $\abs{\overlapset_2} = o_{\sss\P}(M_n)$, which we carry out via a first moment method. We compute
\beq 2\, \E\bigl[ \abs{\overlapset_2} \bigr] 
= \E\Bigl[ \sum_{\substack{a,b\in\rightpar\\a\neq b}} \ind_{\{\intersection(a,b)\geq 2\}} \Bigr] 
= \sum_{\substack{a,b\in\rightpar\\a\neq b}} \P\bigl( \intersection(a,b)\geq2 \bigr). \eeq
With some $K$ to be chosen later, we split the sum
\beq\label{eq:overlapsglob_split} \sum_{\substack{a,b\in\rightpar\\a\neq b}} \P\bigl( \intersection(a,b)\geq2 \bigr) 
= \sum_{\substack{a,b\in\rightpar\\a\neq b\\d_a^\msr\leq K}} \P\bigl( \intersection(a,b)\geq2 \bigr) 
+ \sum_{\substack{a,b\in\rightpar\\a\neq b\\d_a^\msr> K}} \P\bigl( \intersection(a,b)\geq2 \bigr). \eeq
We start by bounding the first term. Recall that $v \comrole \com_a$ denotes the event that $v$ takes a community role in $\com_a$. For individuals $v_1,\ldots,v_k$ and communities $a_1,\ldots,a_l$, denote the event that all $k$ individuals are in all $l$ communities by
\beq \bigl\{ \{v_1,\ldots,v_k\} \comroles \{\com_{a_1},\ldots,\com_{a_l}\} \bigr\}
:= \cap_{i\leq k} \cap_{j\leq l} \{ v_i \comrole \com_{a_j} \}. \eeq
Further recall $d_v^\msl = \ldeg(v)$ and $d_a^\msr = \rdeg(a)$ from \cref{ss:RIGC_def} and $\he_n$ from \eqref{eq:def_halfedges}. By the union bound, 
\beq\label{eq:overlapsglob_1} \bal \P\bigl( \intersection(a,b)\geq2 \bigr) 
&= \P\bigl(\exists v,w\in\leftpar, v< w: \{v,w\} \comroles \{\com_a,\com_b\} \bigr) \\
&\leq \frac12 \sum_{\substack{v,w\in\leftpar\\v\neq w}} \P\bigl( \{v,w\} \comroles \{\com_a,\com_b\} \bigr) \\
&\leq \sum_{\substack{v,w\in\leftpar\\v\neq w}} \frac{d_a^\msr(d_a^\msr-1)d_b^\msr(d_b^\msr-1)d_v^\msl(d_v^\msl-1)d_w^\msl(d_w^\msl-1)}{2\cdot\he_n(\he_n-1)(\he_n-2)(\he_n-3)}. \eal \eeq
Using \eqref{eq:def_sizebiasing}, \eqref{eq:def_ldeg_rdeg}, and that $\he_n = \E[D_n^\msl] N_n$ by \cref{rem:asmp_consequences} \cref{cond:partition_ratio}, 
\beq\label{eq:overlapsglob_Etilde} \sum_{v\in\leftpar} \frac{d_v^\msl(d_v^\msl-1)}{\he_n} 
= \frac{1}{N_n} \sum_{v\in\leftpar} \frac{d_v^\msl(d_v^\msl-1)}{\E[D_n^\msl]} 
= \frac{\E\bigl[ D_n^\msl (D_n^\msl-1) \bigr]}{\E[D_n^\msl]} 
= \E\bigl[ \wit D_n^\msl \bigr]. \eeq
Since $\he_n\to\infty$ as $n\to\infty$, we have that $2\he_n(\he_n-1)(\he_n-2)(\he_n-3) \geq \he_n^4$ for $n$ large enough, thus combining (\ref{eq:overlapsglob_1}-\ref{eq:overlapsglob_Etilde}), we obtain
\beq \bal \P\bigl( \intersection(a,b) \geq 2 \bigr) 
&\leq \frac{d_a^\msr(d_a^\msr-1)d_b^\msr(d_b^\msr-1)}{\he_n^2}
\sum_{v,w\in\leftpar} \frac{d_v^\msl(d_v^\msl-1)}{\he_n} 
\frac{d_w^\msl(d_w^\msl-1)}{\he_n} \\
&\leq \frac{d_a^\msr(d_a^\msr-1)d_b^\msr(d_b^\msr-1)}{\he_n^2} \bigl(\E[\wit D_n^\msl]\bigr)^2.
\eal \eeq
Then, using the condition $d_a^\msr \leq K$, the definition of $\dmax{\msr}$ from  \cref{rem:asmp_consequences} \cref{cond:dmax} and that $\he_n = \sum_{b\in\rightpar} d_b^\msr$ by definition,
\beq\label{eq:overlapsglob_bounded} \bal \sum_{\substack{a,b\in\rightpar\\a\neq b\\d_a^\msr\leq K}} \P\bigl( \intersection(a,b) \geq 2 \bigr) 
&\leq \bigl(\E[\wit D_n^\msl]\bigr)^2 \sum_{\substack{a,b\in\rightpar\\a\neq b\\d_a^\msr\leq K}} \frac{d_a^\msr(d_a^\msr-1)d_b^\msr(d_b^\msr-1)}{\he_n^2} \\
&< \bigl(\E[\wit D_n^\msl]\bigr)^2 K^2 M_n \sum_{b\in\rightpar} \frac{d_b^\msr(\dmax{\msr}-1)}{\he_n^2} 
\leq \bigl(\E[\wit D_n^\msl]\bigr)^2 K^2 M_n \frac{\dmax{\msr}}{\he_n}. \eal \eeq
We continue by bounding the second term in \eqref{eq:overlapsglob_split}, where $d_a^\msr>K$. Using Markov's inequality, we obtain an alternative bound for the probability
\beq\label{eq:overlapsglob_2} \P\bigl( \intersection(a,b) \geq 2 \bigr) \leq \E\bigl[ \intersection(a,b) \bigr]/2. \eeq
Taking expectation in \eqref{eq:def_intersection_size} and again using \eqref{eq:overlapsglob_Etilde},
\beq\label{eq:overlapsglob_3} \E\bigl[ \intersection(a,b) \bigr] 
= \sum_{v\in\leftpar} \P\bigl( v \comroles \{\com_a,\com_b\} \bigr) 
\leq \sum_{v\in\leftpar} \frac{d_a^\msr d_v^\msl (d_v^\msl-1) d_b^\msr}{\he_n (\he_n-1)} 
= \frac{d_a^\msr d_b^\msr}{\he_n-1} \E[\wit D_n^\msl]. \eeq
Combining (\ref{eq:overlapsglob_2}-\ref{eq:overlapsglob_3}), and using that $\sum_{b\in\rightpar}d_b^\msr = \he_n \leq 2(\he_n-1)$ for $n$ large enough,
\beq\label{eq:overlapsglob_tail} \sum_{\substack{a,b\in\rightpar\\a\neq b\\d_a^\msr> K}} \P\bigl( \intersection(a,b)\geq2 \bigr) 
\leq \frac{\E[\wit D_n^\msl]}{2} \sum_{b\in\rightpar} \frac{d_b^\msr}{\he_n-1} \sum_{\substack{a\in\rightpar\\d_a^\msr> K}} d_a^\msr
\leq \E[\wit D_n^\msl] \sum_{a\in\rightpar} d_a^\msr \ind_{\{d_a^\msr>K\}}. \eeq
Combining \eqref{eq:overlapsglob_split}, \eqref{eq:overlapsglob_bounded} and \eqref{eq:overlapsglob_tail},
\beq \bal \frac{2\, \E\bigl[ \abs{\overlapset_2} \bigr]}{M_n} 
&\leq \bigl(\E[\wit D_n^\msl]\bigr)^2 K^2 \frac{\dmax{\msr}}{\he_n}
+ \E[\wit D_n^\msl] \frac{1}{M_n} \sum_{a\in\rightpar} d_a^\msr \ind_{\{d_a^\msr>K\}} \\
&= \bigl(\E[\wit D_n^\msl]\bigr)^2 K^2 \frac{\dmax{\msr}}{\he_n}
+ \E[\wit D_n^\msl] \E[D_n^\msr \ind_{\{D_n^\msr>K\}}]. \eal \eeq
We show that $\E\bigl[ \abs{\overlapset_2} \bigr] / M_n \to 0$ by showing that it can be made arbitrarily small for $n$ large enough. Fix an arbitrary $\eps>0$, and we will choose first $K$ then $n$ so that the obtained upper bound is smaller than $\eps$. Under the second moment condition \eqref{cond:secondmom}, $\E[\wit D_n^\msl] \to \E[\wit D^\msl] < \infty$, thus $(\E[\wit D_n^\msl])_{n\in\N}$ is bounded. By \cref{asmp:convergence} \cref{cond:mean_rdeg}, $(D_n^\msr)_{n\in\N}$ is uniformly integrable, thus we can choose $K=K(\eps)$ large enough so that for all $n$ large enough,
\beq \E[\wit D_n^\msl] \E[D_n^\msr \ind_{\{D_n^\msr>K\}}] \leq \eps. \eeq
Again using that $(\E[\wit D_n^\msl])_{n\in\N}$ is bounded, further that $K$ is now fixed and $\dmax{\msr}/\he_n\to0$ by \cref{rem:asmp_consequences} \cref{cond:dmax}, we conclude that for $n$ large enough,
\beq \bigl(\E[\wit D_n^\msl]\bigr)^2 K^2 \frac{\dmax{\msr}}{\he_n} \leq \eps. \eeq
We conclude that for $n$ large enough, $\E\bigl[ \abs{\overlapset_2} \bigr]/M_n \leq \eps$, which is equivalent to $\E\bigl[ \abs{\overlapset_2} \bigr] = o(M_n)$. By Markov's inequality, $\abs{\overlapset_2} = o_{\sss\P}(M_n)$, which combined with \cref{prop:number_of_overlaps} implies $\abs{\overlapset_2}/\abs{\overlapset_1} = o_{\sss\P}(1)$. This concludes the proof of \cref{thm:overlaps} \cref{perspective:global}.
\proofends

\FloatBarrier

\section*{Acknowledgements}

This work is supported by the Netherlands Organisation for Scientific Research (NWO) through VICI grant 639.033.806 (RvdH), VENI grant 639.031.447 (JK), the Gravitation {\sc Networks} grant 024.002.003 (RvdH), and TOP grant 613.001.451 (VV). VV thanks Lorenzo Federico and Clara Stegehuis for helpful discussions throughout the project.

\printbibliography

\end{document}